\newtheorem{theorem}{Theorem}[section]
\newtheorem{proposition}[theorem]{Proposition}
\newtheorem{corollary}[theorem]{Corollary}
\newtheorem{lemma}[theorem]{Lemma}
\newtheorem{fact}[theorem]{Fact}
\newtheorem*{step1}{Step 1}
\newtheorem*{step2}{Step 2}
\newtheorem*{step3}{Step 3}
\newtheorem*{step4}{Step 4}
\newtheorem*{step5}{Step 5}
\newtheorem*{step6}{Step 6}
\newtheorem*{step7}{Step 7}
\newtheorem*{step8}{Step 8}
\theoremstyle{remark}
\newtheorem*{remark}{Remark}
\newtheorem*{rem1}{Remark 1}
\newtheorem*{rem2}{Remark 2}
\newtheorem*{rem3}{Remark 3}
\theoremstyle{definition}
\newtheorem{example}[theorem]{Example}
\numberwithin{equation}{section}
\newcommand{\R}{\mathbb{R}}
\newcommand{\N}{\mathbb{N}}
\newcommand{\K}{\mathbb{K}}
\newcommand{\Z}{\mathbb{Z}}
\newcommand{\F}{\mathcal{F}}
\newcommand{\Sc}{\mathcal{S}}
\newcommand{\A}{\mathcal{A}}
\newcommand{\minus}{\setminus}
\author[M. Fakhoury]{Micheline Fakhoury}
\address{Univ. Artois, UR 2462, Laboratoire de Math\'{e}matiques de Lens (LML)\\ F-62300 Lens, France}
\email{micheline.fakhoury@univ-artois.fr}
\begin{document}
\title[combinatorial Banach spaces]{Isometries of $p\,$-$\,$convexified \\combinatorial Banach spaces}

\begin{abstract} We show that if $1<p\neq 2<\infty$, then any isometry of the $p\,$-$\,$convexification of the combinatorial Banach space associated with a hereditary family of finite subsets of $\N$ containing the singletons is given by a signed permutation of the canonical basis. In the case of a generalized Schreier family, the result also holds for $p=2$, and every isometry is diagonal. These results are deduced from more general theorems concerning combinatorial-like Banach spaces.
\end{abstract}

\keywords{Isometries, combinatorial families, $p\,$-$\,$convexification, Schreier families}
\subjclass[2020]{46B04, 46B45, 03E05}
\thanks{The author would like to thank her advisor \'Etienne Matheron for his help in the writing of the paper.}
\thanks{This work was supported in part by
the project FRONT of the French
National Research Agency (grant ANR-17-CE40-0021)}

\maketitle
\section{Introduction}
In what follows, the word ``isometry'' will always mean ``linear surjective isometry''. The study of isometries in Banach spaces has a very long and still ongoing history, see e.g. \cite{FJ} or the recent survey \cite{AB}. In the present paper, we will focus on a class of spaces usually called ``{combinatorial Banach spaces}'', that are defined in terms of certain families of finite subsets of $\N$. The isometries of combinatorial Banach spaces  have been analyzed recently in \cite{ABC}, \cite{BFT} and \cite{BP}. In this paper, we look at the {$p\,$-$\,$convexifications} of such spaces, $1<p<\infty$. 

\medskip
Throughout the paper, the vector spaces are real or complex, and the scalar field is denoted by $\mathbb K$. We denote by $c_{00}$ the vector space of all finitely supported scalar-valued sequences, and by $(e_i)_{i\in\N}$ the canonical basis of $c_{00}$. Also, we point out that $\N$ starts at~$1$.

\medskip
By a \emph{combinatorial family}, we mean a family of finite subsets of $\N$ that contains all singletons and is hereditary for inclusion: if $F \in \F$ and $G \subset F$ then $G \in \F$. We say that $\F$ is \emph{compact} if it is compact when considered as a subset of $\{ 0, 1\}^\N$ \textit{via} the identification of a set $F\subset\N$ with its characteristic function; in other words, if $\F$ is closed in $\{ 0,1\}^\N$. By the hereditarity property, $\F$ is compact if and only if its closure $\overline{\F\,}$ in $\{ 0,1\}^\N$ contains only finite sets. More generally, a set $A\subset \N$ is in $\overline{\F\,}$ if and only if every finite subset of $A$ is in $\F$.

\smallskip
Well known examples of compact combinatorial families are the following: for every natural number $n$, the family $\F_{\leqslant  n}:=\{ F\subset\N;\; \vert F\vert\leqslant  n\}$; the classical  \emph{Schreier family} $ \mathcal S_1:=\{\emptyset\}\cup\{ \emptyset\neq F\subset\N;\; \vert F\vert\leqslant \min(F)\};$
and the generalized Schreier families $\mathcal S_\alpha$, $\alpha<\omega_1$ introduced in \cite{AA} (see Section \ref{Schreier}). As for non-compact families, the most obvious example is  $\F:= {\rm FIN}$, the family of all finite subsets of $\N$.

\medskip 
Given a combinatorial family $\F$, the \emph{combinatorial Banach space} $X_\F$ is defined to be the completion of $c_{00}$  with respect to the following norm: 
\[ \Vert x \Vert_\F := \sup \biggl\{\sum_{i\in F} \vert x(i)\vert; \; F \in \F\biggr\}. \]
Note that the only property of $\F$ that is needed for this formula to define a norm is that $\mathcal F$ contains all singletons. The two ``extreme'' examples are $\F:= \F_{\leqslant  1}$, for which $X_\F=c_0$, and $\F:= {\rm FIN}$, for which $X_\F=\ell_1$. As an ``intermediate'' example, for the family 
$\F:=\{ F\in{\rm FIN};\; F\subset 2\N\;{\rm or}\; F=\{ n\}\;\hbox{for some $n\in 2\N-1$}\}$ one gets the $\ell_\infty\,$-$\,$direct sum $\ell_1(2\N)\oplus c_0(2\N-1)$.

\smallskip
It is clear by definition that $(e_i)_{i\in\N}$ is a $1$-unconditional Schauder basis for $X_\F$. Moreover, it is known that if $\F$ is compact then $(e_i)$ is shrinking, \textit{i.e.} the associated sequence of coordinate functionals 
$(e_i^*)_{i\in\N}$ is a Schauder basis of $X_{\F}^*$, but not boundedly complete. This follows from the fact that $X_\mathcal F$ is isomorphic to a subspace of $\mathcal C(\F)$ (see e.g. \cite{BiLau}), and hence $X_{\mathcal F}$ is $c_0\,$-$\,$saturated because $\F$ is a countable 
compact metric space (\cite{BessPel}, see also \cite{Ro}). As such, $X_\F$ is non-reflexive and does not contain $\ell_1$, which gives the result since the basis $(e_i)$ is unconditional (\cite{J}, see also \cite{LT} or \cite{AK}).

\medskip
The space $X_{\mathcal F}$ has some immediately apparent isometries: if $\pi:\N\to \N$ is a permutation of $\N$ preserving 
the family $\mathcal F$ (\textit{i.e.} $\pi(\mathcal F)=\mathcal F$, where $\pi(\mathcal F):=\{ \pi(F);\; F\in\mathcal F\}$) and if $(\omega_i)_{i\in\N}$ is any sequence of signs (\mbox{\it i.e.} scalars of modulus $1$), then the formula $Je_i:=\omega_i e_{\pi(i)}$, $i\in\N$ uniquely defines an isometry of 
$X_{\mathcal F}$. Such an isometry is called an \emph{$\F$-$\,$compatible signed permutation} of the basis $(e_i)$. If $\F$ is compact then, since $(e_i^*)$ is a basis of $X_\F^*$, one can define in the same way $\F$-$\,$compatible signed permutations of $(e_i^*)$, which are isometries of $X_{\mathcal F}^*$ (more precisely, the adjoints of the $\F$-$\,$compatible signed permutations of $(e_i)$). It is quite natural to wonder whether any isometry of $X_\F$ or $X_\F^*$ arises in that way. This question is answered positively in \cite{ABC} for Schreier families of finite order, and in \cite{BFT} and \cite{BP} for more general families. In particular, the following results have been obtained.

\begin{itemize}
\item[(i)] Assume that the combinatorial family $\mathcal F$ is compact and \emph{spreading}, which means that for any $F \in \F$ and $\sigma : F \to \N$ such that  $\sigma(n) \geqslant   n$ for all $n \in F$, 
we have $\sigma(F) \in \F$. Then, any isometry of $X_{\F}^*$ is an $\F$-$\,$compatible signed permutation of $(e_i^*)$, and hence any isometry of $X_\F$ is an $\F$-$\,$compatible signed permutation of $(e_i)$. (\cite{BFT})
\item[(ii)] Assume only that $\F$ is compact and that the closure of $\F^{MAX}$ in $\{ 0,1\}^\N$ contains all singletons, where $\F^{MAX}$ is the set of all maximal elements of $\F$. Then, any isometry of $X_\F$ is an $\F$-$\,$compatible signed permutation of $(e_i)$. (\cite{BP})
\item[(iii)] More generally, let $\F_1$ and $\F_2$ be two compact combinatorial families such that $\overline{\F_1^{MAX}}$ and $\overline{\F_2^{MAX}}$ contain all singletons. Then, $X_{\F_1}$ and $X_{\F_2}$ are isometric if and only if there exists a permutation $\pi$ of $\N$ such that $\pi(\F_1)=\F_2$; and in that case any isometry between $X_{\F_1}$ and $X_{\F_2}$ is an $(\F_1,\F_2)\,$-$\,$compatible signed permutation of $(e_i)$. Moreover, if $\F_1$ and $\F_2$ are both spreading, then $X_{\F_1}$ and $X_{\F_2}$ can be  isometric only when $\F_1=\F_2$. (\cite{BP})
\item[(iv)] If $\mathcal F$ is a generalized Schreier family $\mathcal S_\alpha$, then any isometry of $X_\F^*$ is \emph{diagonal}, \textit{i.e.} it has the form $Je_i^*=\omega_i e_i^*$ for some sequence of signs 
$(\omega_i)$. (\cite{ABC}, \cite{BFT})
\end{itemize}

Note that (i) implies in particular that if $\F$ is compact and spreading, then any isometry of $X_\F^*$ is $w^*$-$\,w^*$ continuous, being the adjoint of an isometry of $X_\F$. We will see below that when $\K=\mathbb C$, the spreading assumption can be dispensed with.

\medskip
As already mentioned, in the present paper we  look at \emph{$p\,$-$\,$convexifications} of combinatorial Banach spaces. 
Given a combinatorial family $\F$ and $p\in (1,\infty)$, the $p\,$-$\,$convexification of $X_\F$, denoted by $X_{\F,p}$, is defined to be the completion of $c_{00}$ with respect to the norm 
\[ \Vert x \Vert_{\F,p} := \sup \biggl\{\biggl(\sum_{i\in F} \vert x(i)\vert^p\biggr)^{{1}/{p}}; \; F \in \F\biggr\}. \]

Again, $(e_i)_{i\in\N}$ is a $1$-unconditional basis of $X_{\mathcal F, p}$. It is always shrinking, but not boundedly complete if $\F$ is compact (see \cite{BiLau} or Section \ref{blabla}). Moreover, $\F\,$-$\,$compatible signed permutations of $(e_i)$ and $(e_i^*)$ are still well-defined isometries, and the latter are the adjoints of the former. We are going to prove the following theorem.
\begin{theorem}\label{main} Let $\F$ be a combinatorial family, and let $1<p<\infty$.
\begin{enumerate}
\item[\rm (1)] If $p\neq 2$, then any isometry of $X_{\mathcal F,p}^*$ is an $\F$-$\,$compatible signed permutation of $(e_i^*)$, and hence any isometry of $X_{\mathcal F, p}$ is an $\F$-$\,$compatible signed permutation of $(e_i)$.
\item[\rm (2)] Assume that $\mathcal F$ is a generalized Schreier family $\mathcal S_\alpha$, $1\leqslant\alpha<\omega_1$. Then: if $p\neq 2$, any isometry of $X_{\mathcal F, p}^*$ is diagonal; and if $p=2$, any isometry of $X_{\mathcal F, 2}$ is diagonal.
\end{enumerate}
\end{theorem}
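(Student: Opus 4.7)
My plan is to identify, for $p\neq 2$, the set $\{\omega e_i^*:i\in\N,\,|\omega|=1\}$ as a geometrically distinguished subset of the unit sphere of $X_{\F,p}^*$ that any isometry must preserve, yielding a signed permutation. This parallels the strategy used in \cite{BFT} and \cite{BP} for the non-convexified case, the $\ell_1$-lattice arguments available at $p=1$ being replaced here by strict-convexity/smoothness arguments exploiting the $\ell_q$-structure of the atomic pieces (where $1/p+1/q=1$).

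\textbf{Steps for Part (1).} First I would analyze the unit ball of $X_{\F,p}^*$. A key class is the atomic functionals $\psi_{F,a}=\sum_{i\in F}a_i e_i^*$ with $F\in\F$ and $\sum_{i\in F}|a_i|^q=1$: each such $\psi_{F,a}$ lies in the unit sphere of $X_{\F,p}^*$, norm-attains at an essentially unique direction, and the $\omega e_i^*$ are exactly those supported on singletons. The core step is to isolate an intrinsic geometric property of the functionals $\omega e_i^*$ that separates them from atomic functionals with $|F|\geqslant 2$ when $p\neq 2$. A natural candidate involves Fr\'echet smoothness of the norm on $X_{\F,p}$ at the direction where $\psi$ norm-attains, combined with some extreme-point condition: for $|F|\geqslant 2$ and $q\neq 2$ the $\ell_q$-sphere of $\K^F$ is strictly convex yet not Euclidean, so atomic functionals with larger support exhibit a richer local geometry that distinguishes them from singleton-supported ones. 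Any isometry $J$ of $X_{\F,p}^*$ must preserve this distinguishing property, hence $J(e_i^*)=\omega_i e_{\pi(i)}^*$ for some permutation $\pi$ of $\N$ and signs $\omega_i$. Comparing dual norms of finite sums $\sum_{i\in F}\lambda_i e_i^*$ with their $J$-images then forces $\pi(\F)=\F$; the corresponding statement for $X_{\F,p}$ follows by adjunction since $(e_i)$ is shrinking.

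\textbf{Part (2) and the main obstacle.} For $\F=\mathcal S_\alpha$ and $p\neq 2$, Part (1) reduces the claim to the rigidity statement that the only permutation of $\N$ preserving $\mathcal S_\alpha$ is the identity, which I would prove by a transfinite induction on $\alpha$, using that admissibility in $\mathcal S_\alpha$ is governed by $\min F$ and the inductive cut-offs. The case $p=2$ lies genuinely outside the reach of Part (1), since the Euclidean symmetry of $\ell_2$-balls erases the smoothness asymmetry used to single out singleton-supported atoms. For $p=2$ I would therefore work directly on $X_{\mathcal S_\alpha,2}$, combining a local Hilbertian analysis on admissible sets with the asymmetric Schreier structure to rule out non-diagonal isometries. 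The main obstacle throughout is the distinguishing step of Part (1), i.e.\ identifying a geometric invariant of $\omega e_i^*$ intrinsic enough to be preserved by isometries and sharp enough to exclude atomic functionals of larger support; once this invariant is in hand the rest of the argument should proceed by direct norm comparisons, while the $p=2$ Schreier case requires a separate, more combinatorial treatment based on the specific structure of $\mathcal S_\alpha$.
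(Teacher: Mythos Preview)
Your proposal for Part (1) has a genuine gap at precisely the point you yourself flag as ``the main obstacle'': you never specify the geometric invariant that would single out the functionals $\omega e_i^*$ among all extreme points of $B_{X_{\F,p}^*}$, and it is not clear that such a one-point invariant exists. In the paper's description the extreme points are exactly the functionals $x^*$ with $\mathrm{supp}(x^*)\in\overline{\F}$ and $\|\mathbf x^*\|_{\ell_q}=1$; every such atom, regardless of support size, is an extreme point that norm-attains at a unique smooth direction, so the smoothness/strict-convexity heuristics you mention do not obviously separate singletons from larger atoms.

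The paper circumvents this by a \emph{pairwise} argument rather than a one-point characterization. The key steps are: (i) for any $\{j,k\}\in\F$, the whole segment $\{\alpha e_j^*+\beta e_k^*:\|\alpha e_j^*+\beta e_k^*\|=1\}$ consists of extreme points, so $J^{-1}$ sends each such point to an extreme point, whose support therefore lies in $\overline{\F}$; a Baire-category choice of $(\alpha,\beta)$ then forces $\mathrm{supp}(J^{-1}e_j^*)\cup\mathrm{supp}(J^{-1}e_k^*)\in\overline{\F}$; (ii) on this common support set the dual norm agrees with the $\ell_q$-norm, so $T\mathbf e_j^*:=\mathbf{J^{-1}e_j^*}$, $T\mathbf e_k^*:=\mathbf{J^{-1}e_k^*}$ defines an isometric embedding of $\mathrm{span}(\mathbf e_j^*,\mathbf e_k^*)$ into $\ell_q$; (iii) the classical fact that for $q\neq 2$ any such embedding sends $\mathbf e_j^*,\mathbf e_k^*$ to disjointly supported vectors yields $\mathrm{supp}(J^{-1}e_j^*)\cap\mathrm{supp}(J^{-1}e_k^*)=\emptyset$. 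From this disjointness one reads off that each $Je_n^*$ has singleton support. The crucial idea you are missing is this reduction to a two-dimensional $\ell_q$-embedding problem, which is where $p\neq 2$ actually enters.

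For Part (2) with $p\neq 2$, your plan is correct in outline but the paper's argument that the identity is the only $\Sc_\alpha$-preserving permutation does not proceed by direct transfinite induction on admissibility: it shows instead that $n\mapsto\mathrm{rk}_{\Sc_\alpha}(\{n\})$ is strictly increasing, where $\mathrm{rk}$ is the Cantor--Bendixson rank, and uses that any $\F$-preserving permutation preserves these ranks. For $p=2$ the paper works on $X_{\Sc_\alpha,2}$ (not its dual), using an explicit description of $\mathrm{Ext}(B_{X_{\Sc_\alpha,2}})$ from \cite{ABC} together with a delicate eight-step combinatorial argument; your ``local Hilbertian analysis'' is too vague to count as a plan here.
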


\smallskip The main difference between this theorem and the above results (i), (ii) from \cite{BFT} and~\cite{BP} is that in (1), we do not need to make any additional assumption on the family $\mathcal F$ (not even compactness). On the other hand, when $p=2$ we are not able to get the formally stronger conclusion concerning isometries of $X_{\F,2}^*$ in (2). This is most likely due to a deficiency in our proof: if $p\neq 2$, we deduce the result from (1) and the 
general fact (hinted at in \cite{BP}) that the only permutation $\pi:\N\to\N$ preserving $\mathcal S_\alpha$ is the identity, whereas if $p\neq 2$, we adapt  arguments from \cite{ABC} that do not seem to ``dualize'' in an obvious way. (However, in the complex case we do get the result for $X_{\Sc_\alpha,2}^*$; see Corollary~\ref{Scfin}.)  Moreover, (2) is stated in a somewhat vague way: see Proposition \ref{restated} for the precise statement.

\smallskip Let us also note that in (1), the assumption that $p\neq 2$ is clearly necessary to get the result for every combinatorial family (take $\F:={\rm FIN}$, for which $X_{\F,2}=\ell_2$). It is still necessary if we restrict ourselves to compact families $\mathcal F$, as shown by the following example.

\begin{example}\label{counter}
Let $\F:= \{F \subset \N; \; F \subseteq \{1,2\} \text{ or } F=\{n\} \text{ for some } n\geqslant   3\}$ and let $J: X_{\F,2}\to X_{\F,2}$ be the operator defined as follows: 
\[
\begin{aligned}
   Je_i:= \begin{cases}
    \;\;\,\cos{\alpha}\, e_1+ \sin{\alpha}\, e_2&\text{if }i=1,\\
   -\sin{\alpha}\, e_1+ \cos{\alpha}\, e_2  &\text{if }i=2,\\
   \;\;\;  e_i &\text{otherwise},
  \end{cases} 
\end{aligned}\]
where $\alpha\in \R\setminus (\pi\mathbb Z \cup \frac{\pi}{2}\Z)$. 
Then $J$ is an isometry which is not a signed permutation of the basis $(e_i)$. The same example works with the family 
\[ \mathcal F:=\bigl\{ F\subset\N;\; F\subseteq \{ 1,2\}\;{\rm or}\; F\subseteq I_n\;\hbox{for some $n$}\bigr\},\]
where $(I_n)_{n\in\N}$ is any partition of $\N\setminus\{ 1,2\}$ into consecutive intervals. This shows that when $p= 2$, part (1) of Theorem \ref{main} can fail even for compact families $\mathcal F$ containing sets of arbitrarily large cardinality.
\end{example}

\smallskip To prove part (1) of Theorem \ref{main}, we will follow closely the strategy used in 
\cite{BFT} and~\cite{BP} to get the corresponding result for $X_\F^*$. In particular, a key step is the description of the extreme points of the unit ball of $X_{\F,p}^*$. However, in the complex case $(\K=\mathbb C)$ one can give a quite different proof by making use of the theory developed by N. J. Kalton and G. V. Wood in \cite{KW}. This approach also allows to prove  the following proposition  concerning $X_\F$ and $X_{\mathcal F}^*$. This generalizes the main result of \cite{BFT}, but in the complex case only. That we need $\K=\mathbb C$  is not accidental: it is shown in \cite{AC} that the result is \emph{not} true when $\K=\R$. 

\begin{proposition}\label{l1} Assume that $\K=\mathbb C$. Let $\mathcal F$ be a combinatorial family. 
\begin{enumerate}
\item[\rm (a)] Any isometry of $X_{\F}$ is an $\F$-$\,$compatible signed permutation of $(e_i)$.
\item[\rm (b)] If $\F$ is compact, then any isometry of $X_{\F}^*$ is an $\F$-$\,$compatible signed permutation of $(e_i^*)$.
\end{enumerate} 
\end{proposition}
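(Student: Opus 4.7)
The plan is to apply the Kalton--Wood theorem from \cite{KW} on isometries of complex Banach spaces with a $1$-unconditional basis. A consequence of their analysis is the following: if $X$ is a complex Banach space with a normalized $1$-unconditional basis $(u_i)_{i\in\N}$ such that, for every $i\neq j$, the two-dimensional subspace $\Span(u_i,u_j)$ is \emph{not} isometric to $\ell_2^2$, then every surjective isometry $U$ of $X$ has the form $Uu_i=\omega_i u_{\pi(i)}$ for some permutation $\pi$ of $\N$ and some signs $(\omega_i)$. The main obstacle is essentially finding and invoking this theorem in the right form; the remaining steps are straightforward verifications.

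For (a), the non-Hilbertian hypothesis is immediate. Since $\F$ contains all singletons and is hereditary, for $i\neq j$ only subsets of $\{i,j\}$ contribute to $\|\alpha e_i+\beta e_j\|_\F$, giving
\[
\|\alpha e_i+\beta e_j\|_\F = \begin{cases} |\alpha|+|\beta| & \text{if }\{i,j\}\in\F,\\ \max(|\alpha|,|\beta|) & \text{if }\{i,j\}\notin\F.\end{cases}
\]
Thus $\Span(e_i,e_j)$ is isometric either to $\ell_1^2$ or to $\ell_\infty^2$, never to $\ell_2^2$. Kalton--Wood then yields $Je_i=\omega_i e_{\pi(i)}$ for some permutation $\pi$ and signs $\omega_i$. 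To confirm $\pi(\F)=\F$, apply $J$ to $x_F := \sum_{i\in F}e_i$ for $F\in\F$: one has $\|x_F\|_\F=|F|$, and the image $Jx_F=\sum_{i\in F}\omega_i e_{\pi(i)}$ is supported on $\pi(F)$ with all coefficients of modulus $1$, so $\|Jx_F\|_\F = \sup_{G\in\F}|G\cap\pi(F)| = |F|$ forces $\pi(F)\subseteq G$ for some $G\in\F$, hence $\pi(F)\in\F$ by hereditarity. The same argument applied to $J^{-1}$ gives the reverse inclusion.

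For (b), compactness of $\F$ makes $(e_i^*)$ a $1$-unconditional basis of $X_\F^*$. For $i\neq j$, the norm $\|\alpha e_i^*+\beta e_j^*\|_{X_\F^*}$ is attained on vectors $x$ supported on $\{i,j\}$ (enlarging the support can only increase $\|x\|_\F$ without changing the functional's value), so $\Span(e_i^*,e_j^*)$ is isometric to the dual of $\Span(e_i,e_j)$, hence to $\ell_\infty^2$ or $\ell_1^2$. Kalton--Wood again applies and gives $Te_i^*=\omega_i e_{\pi(i)}^*$. For $\F$-compatibility, I dualize: a direct computation shows that under the canonical embedding $X_\F\hookrightarrow X_\F^{**}$ one has $T^*e_i=\omega_{\pi^{-1}(i)}e_{\pi^{-1}(i)}$, so $T^*$ preserves $X_\F$ and restricts to an isometry of $X_\F$; part (a) then yields $\pi^{-1}(\F)=\F$, hence $\pi(\F)=\F$.
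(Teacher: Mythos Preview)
Your proof is correct and follows essentially the same approach as the paper: invoke Kalton--Wood to show the Hilbert components are $1$-dimensional (since each $\Span(e_i,e_j)$ is $\ell_1(2)$ or $\ell_\infty(2)$), then verify $\F$-compatibility. The only minor variation is in part (b): the paper checks compatibility directly on the dual using strict $1$-unconditionality and $1$-symmetry of the $\ell_1$ basis (via Fact~\ref{strict}), whereas you dualize and reduce to part (a); both work equally well.
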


\smallskip The rest of the paper is organized as follows. In Section \ref{blabla}, we prove part (1) of Theorem \ref{main} and Proposition \ref{l1}. We obtain in fact more general results concerning Banach spaces of the form $X_{\F,\mathbf E}$, where $\F$ is a combinatorial family and $\mathbf E$ is a Banach space endowed with a normalized $1$-unconditional basis (Theorem \ref{nonsense}, Corollary \ref{l1bis} and Theorem \ref{nonsensebis}).  We also give some examples involving Orlicz sequence spaces to illustrate these results. 
In Section \ref{Schreier}, we recall the definition of the Schreier families $\mathcal S_\alpha$, prove a few facts concerning these families, and prove part (2) of Theorem \ref{main}. As mentioned above, the approach will be different when $p\neq 2$ and when $p=2$. Finally, in Section \ref{more} we use \cite{KW} to obtain, in the complex case, a characterization of those families  $\F$ such that all isometries of $X_{\F,2}$ are signed permutations of $(e_i)$ (Proposition \ref{carac}). We point out that this characterization is due to C. Brech and A. Tcaciuc  \cite{AC}, and that it is valid also in the real case. The ``novelty'' in the present paper is to explain how the methods of \cite{KW} naturally lead to the result. We would like to thank C. Brech  and A. Tcaciuc for showing us their result, and for allowing us to use some of their ideas.

\section{Arbitrary families and $p\neq 2$}\label{blabla}

\subsection{An abstract result} In this section, we prove part (1) of Theorem \ref{main}. Since this requires not much additional effort, we do this in a more general context, which we now describe.

\medskip We fix a Banach space $(\mathbf E, \Vert\,\cdot\,\Vert_{\mathbf E})$ admitting a normalized, $1$-unconditional basis $(\mathbf e_i)_{i\in\N}$, and we denote by $(\mathbf e_i^*)$ the associated sequence of coordinate functionals. 

\medskip Recall that $(e_i)_{i\in\N}$ is the canonical basis of $c_{00}$. For any $x=\sum_{i\in\N} x(i) e_i \in c_{00}$, we denote by $\mathbf x$ the ``copy'' of $x$ living in $\mathbf E$, \mbox{\it i.e.} \[ \mathbf x:=\sum_{i\in\N} x(i) \mathbf e_i;\] and if $\F$ is a combinatorial family, we set
\[ \Vert x\Vert_{\mathcal F, \mathbf E}:= \sup\,\left\{ \Bigl\Vert \sum_{i\in F} x(i) \mathbf e_i\Bigr\Vert_{\mathbf E};\; F\in\mathcal F\right\} =\sup\,\bigl\{ \Vert P_F(\mathbf x)\Vert_{\mathbf E};\; F\in \mathcal F\bigr\},\]
where the notation $P_F$ should be self-explanatory.

\smallskip  We denote by $X_{\mathcal F,\mathbf E}$ the completion of $( c_{00}, \Vert \,\cdot\,\Vert_{\mathcal F,\mathbf E})$; and for notational simplicity, we will often write $\Vert\,\cdot\,\Vert$ instead of $\Vert \,\cdot\,\Vert_{\mathcal F,\mathbf E}$. 

\medskip By definition, it is clear that $(e_i)_{i\in\N}$ is a $1$-unconditional basis of $X_{\mathcal F,\mathbf E}$: indeed, $(e_i)$ is a $1$-unconditional sequence because $(\mathbf e_i)$ is, and $\overline{\rm span}\, ( e_i;\; i\in\N)=X_{\mathcal F, \mathbf E}$. The basis $(e_i)$ may not be shrinking, so we set \[ Z_{\F,\mathbf E}:=\overline{{\rm span}(e_i^*,\, i\in\N)}^{X_{\F,\mathbf E}^*}.\]

 We are going to prove the following theorem.
 
 \begin{theorem}\label{nonsense} Assume that the dual space $\mathbf E^*$ is strictly convex and that the following assumption holds true: 
 \begin{itemize}
 \item[ {\rm (H)}] For any $j\neq k$ in $\N$, if $T:{\rm span}(\mathbf e_j^*,\mathbf e_k^*)\to \mathbf E^*$ is a linear isometric embedding, then the vectors $T\mathbf e_j^*$ and $T\mathbf e_k^*$ have disjoint supports.
 \end{itemize}

Let $\F_1$ and $\F_2$ be two combinatorial families, and let $J: X_{\F_1,\mathbf E}^*\to X_{\F_2,\mathbf E}^*$ be an isometry. If either $J$ is $w^*$-$\, w^*$ continuous, or  $J(Z_{\F_1,\mathbf E})=Z_{\F_2,\mathbf E}$,  then there exist a permutation $\pi:\N\to\N$ with $\pi(\F_1)=\F_2$ and a sequence of signs 
$(\omega_i)$ such that $Je_i^*=\omega_i e_{\pi(i)}^*$ for all $i\in\N$. The second alternative occurs as soon as $\F_1$ and $\F_2$ are compact.

\smallskip Moreover, when $\mathbb K=\mathbb C$ one can replace {\rm (H)} by the assumption that ${\rm span}(\mathbf e_j^*,\mathbf e_k^*)$ is not isometrically equal to $\ell_2(2)$ for any $j\neq k$ in $\N$.

\smallskip Finally, if the families $\F_1$ and $\F_2$ are compact, then the strict convexity of $\mathbf E^*$ may be replaced by the  assumption that $\Vert\,\cdot\,\Vert_{\bf E}$ is smooth at every finitely supported vector $\mathbf x\in S_{\bf E}$, the unit sphere of $\bf E$; and it is enough to know that {\rm (H)} holds true for isometric embeddings $T:{\rm span}(\mathbf e_j^*,\mathbf e_k^*)\to {\rm span}(\mathbf e_i^*,\,i\in\N)$.

\end{theorem}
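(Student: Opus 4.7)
My plan is to follow the template of \cite{BFT} and \cite{BP}: exhibit an explicit supply of norm-one functionals on $X_{\F,\mathbf E}$, show that $J$ must map each $e_i^*$ to one of these, and then exploit hypothesis (H) on two-dimensional subspaces to force the signed-permutation structure. The starting point is a short structural lemma based on the $1$-unconditionality identity $\|x\|_{\F,\mathbf E}=\|\mathbf x\|_{\mathbf E}$, valid whenever $\mathrm{supp}(x)\in\F$: for any $\phi\in\mathbf E^*$ with $\mathrm{supp}(\phi)\subset F$ for some $F\in\F$, the formula $\hat\phi(x):=\phi(\mathbf x)=\sum_i\phi(i)x(i)$ defines a functional in $X_{\F,\mathbf E}^*$ with $\|\hat\phi\|=\|\phi\|_{\mathbf E^*}$. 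Crucially, for finitely many $\phi_j$ sharing a common support inside some $F\in\F$, the linear map $\sum_j a_j\phi_j\mapsto\sum_j a_j\hat{\phi_j}$ is an isometry from $\mathrm{span}(\phi_j)\subset\mathbf E^*$ into $X_{\F,\mathbf E}^*$. In particular, when $\{i,k\}\in\F$ the pair $(e_i^*,e_k^*)$ spans in $X_{\F,\mathbf E}^*$ a subspace linearly isometric to $\mathrm{span}(\mathbf e_i^*,\mathbf e_k^*)\subset\mathbf E^*$; when $\{i,k\}\notin\F$, a direct computation gives $\mathrm{span}(e_i^*,e_k^*)\cong\ell_1(2)$ instead.

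Next I would characterize the norm-attaining unit functionals: every $x^*\in S_{X_{\F,\mathbf E}^*}$ that attains its norm at some $x$ has the form $\hat\phi$ for a unit $\phi\in\mathbf E^*$ whose support lies in some $A\in\overline{\F}$. The idea is that the supremum $\|x\|_{\F,\mathbf E}=\sup_{F\in\F}\|P_F\mathbf x\|_{\mathbf E}$ is approached along a sequence $F_n\in\F$ converging in $\{0,1\}^\N$ to some $A\in\overline{\F}$; strict convexity of $\mathbf E^*$ then supplies a unique norming functional $\phi$ of the limiting vector, supported on $A$, and a Hahn--Banach comparison forces $x^*=\hat\phi$. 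Both hypotheses on $J$ ensure that $Je_i^*$ is norm-attaining: in the $w^*$-$w^*$ continuous case $J$ is the adjoint of a predual isometry $T$, so $Je_i^*$ attains its norm at $Te_i$; in the $Z$-preserving case, the restriction of $J$ to $Z_{\F_1,\mathbf E}$ is adjoint to a corresponding isometry of preduals, yielding the same conclusion. Consequently $Je_i^*=\hat{\phi_i}$ for some $\phi_i\in S_{\mathbf E^*}$ with $\mathrm{supp}(\phi_i)\in\overline{\F_2}$. That the second alternative is automatic when $\F_1,\F_2$ are compact follows from the fact that, under mild conditions on $\mathbf E$, compactness of $\F$ makes $(e_i)$ shrinking, whence $Z_{\F_j,\mathbf E}=X_{\F_j,\mathbf E}^*$.

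It remains to show that (H) forces each $\phi_i$ to be supported on a single index. Fix $i\neq k$; the structural lemma, applied on both sides (with a limit/truncation argument to handle supports in $\overline{\F_2}$), produces via $J$ a linear isometry between $\mathrm{span}(e_i^*,e_k^*)\subset X_{\F_1,\mathbf E}^*$ and $\mathrm{span}(\phi_i,\phi_k)\subset\mathbf E^*$. When $\{i,k\}\in\F_1$, the source is isometric to $\mathrm{span}(\mathbf e_i^*,\mathbf e_k^*)$, and (H) gives $\mathrm{supp}(\phi_i)\cap\mathrm{supp}(\phi_k)=\emptyset$; when $\{i,k\}\notin\F_1$, the source is isometric to $\ell_1(2)$, which by strict convexity of $\mathbf E^*$ cannot embed isometrically into $\mathbf E^*$, and a short analysis of the $\overline{\F_2}$-support structure still yields disjoint supports. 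Iterating over pairs, each $\mathrm{supp}(\phi_i)$ reduces to a singleton $\{\pi(i)\}$, so $Je_i^*=\omega_i e_{\pi(i)}^*$; applying $J^{-1}$ symmetrically makes $\pi$ a permutation, and the structural lemma applied to every finite $F\in\F_1$ gives $\pi(\F_1)=\F_2$. In the complex case, the Kalton--Wood theory of \cite{KW} permits replacing (H) by the weaker assumption that no $\mathrm{span}(\mathbf e_j^*,\mathbf e_k^*)$ is isometrically $\ell_2(2)$; and when $\F$ is compact and $\|\cdot\|_{\mathbf E}$ is smooth at finitely supported unit vectors, a density argument supplies enough exposed norm-attaining points to substitute for strict convexity of $\mathbf E^*$.

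The main obstacle I anticipate is passing from ``$\mathrm{supp}(\phi_i)\in\overline{\F_2}$'' (where the support can be infinite when $\F_2$ is not compact) back to an effective use of the structural lemma, which is most cleanly stated for supports in $\F_2$. This should be handled by truncation and $1$-unconditionality in $\mathbf E$, but requires care. A secondary difficulty is the case $\{i,k\}\notin\F_1$, which ought not to arise in practice but must be ruled out so that the ``disjoint supports'' conclusion holds uniformly.
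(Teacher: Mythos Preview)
Your plan has a genuine gap at the point where you pass from ``each $Je_i^*$ has support in $\overline{\F_2}$'' to ``$\mathrm{span}(Je_i^*,Je_k^*)$ embeds isometrically into $\mathbf E^*$''. For that embedding to hold via your structural lemma (which is the paper's Corollary~\ref{corsuperbasic}), you need the \emph{union} $\mathrm{supp}(\phi_i)\cup\mathrm{supp}(\phi_k)$ to lie in $\overline{\F_2}$, not just each support separately. Knowing $A\in\overline{\F_2}$ and $B\in\overline{\F_2}$ individually gives nothing about $A\cup B$, and no truncation argument will help: truncations of $A$ and $B$ to $[1,n]$ lie in $\F_2$, but their union typically will not. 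The paper handles this (Fact~\ref{presupp}) by working instead with $J^{-1}$ and a pair $\{j,k\}\in\F_2$: since $\alpha e_j^*+\beta e_k^*$ is an \emph{extreme point} of $B_{X^*_{\F_2,\mathbf E}}$ for any unit choice of $(\alpha,\beta)$ (by Lemma~\ref{ext}), so is $J^{-1}(\alpha e_j^*+\beta e_k^*)$, and Lemma~\ref{ext} then forces its support into $\overline{\F_1}$. A Baire category choice of $(\alpha_0,\beta_0)$ avoiding countably many hyperplanes makes this support exactly $A\cup B$, giving $A\cup B\in\overline{\F_1}$.

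There is a second, more basic problem: your claim that every norm-attaining unit functional on $X_{\F,\mathbf E}$ has support in $\overline{\F}$ is false. Take $\F=\F_{\leqslant 1}$, so that $X_{\F,\mathbf E}=c_0$ regardless of $\mathbf E$; then $x^*=\tfrac12 e_1^*+\tfrac12 e_2^*$ attains its norm at $e_1+e_2$, yet $\mathrm{supp}(x^*)=\{1,2\}\notin\overline{\F}$. The issue is that $X_{\F,\mathbf E}$ is typically not smooth, so a given $x$ has many norming functionals and your ``Hahn--Banach comparison'' cannot single out the one supported on the maximizing set $A$. The correct isometry-invariant notion is \emph{extreme point}, and the strict convexity of $\mathbf E^*$ is used precisely to identify the extreme points of $B_{X^*_{\F,\mathbf E}}$ (Lemma~\ref{ext}), not to pin down norm-attaining functionals. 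Once you replace norm-attaining by extreme and supply the Baire argument for the union of supports, the rest of your outline matches the paper's proof; note also that the paper applies (H) on the $\F_1$ side via $J^{-1}$ with $\{j,k\}\in\F_2$, which sidesteps your ``$\{i,k\}\notin\F_1$'' worry entirely.
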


\medskip The unexplained notation and terminology are as follows. 
If $Z$ is a Banach space with a Schauder basis $(f_i)_{i\in\N}$, we define the \emph{support} of a vector $z^*\in Z^*$ to be the set $\{ i\in\N;\; \langle z^*, f_i\rangle\neq 0\}$; which makes sense even if $(f_i^*)$ is not a basis of $Z^*$. 
If $u$ and $v$ are two vectors in a normed space $Z$, we say that \emph{${\rm span}(u,v)$ is isometrically equal to $\ell_2(2)$} if $\Vert au+bv\Vert^2=\vert a\vert^2+\vert b\vert^2$ for any $a,b\in\K$. Finally, $\Vert\,\cdot\,\Vert_{\bf E}$ is said to be \emph{smooth} at some vector $\mathbf x\in S_{\bf E}$ if there is exactly one linear functional $\mathbf x^*\in S_{\mathbf E^*}$ such that $\langle \mathbf x^*, \mathbf x\rangle=1$.

\medskip It is well known, and easy to see from any proof that the isometries of $\ell_q$, $q\neq 2$ are signed permutations of the canonical basis (see e.g. \cite{Car} or \cite{AB}), that assumption (H) is satisfied if $\mathbf E=\ell_p$, $1<p<\infty$ and $p\neq 2$. Moreover, we already mentioned that 
$(e_i)$ is a shrinking basis of $X_{\F, p}$ for any combinatorial family $\F$, \mbox{\it i.e.} $Z_{\F,p}=X_{\F,p}^*$. So Theorem~\ref{nonsense} immediately implies part (1) of Theorem \ref{main}. More generally, Theorem \ref{nonsense} applies to a reasonably large class of Orlicz sequence spaces, see Section \ref{Exs}.

\smallskip Let us also note that if the basis $(\mathbf e_i)$ of $\mathbf E$ is \emph{$1$-symmetric}, which means that all permutations $\pi:\N\to\N$ induce well-defined isometries of $\mathbf E$, then any  permutation $\pi$ such that $\pi(\mathcal F_1)=\mathcal F_2$ does induce an isometry  between  $Z_{\F_1,\mathbf E}$ and $Z_{\F_2,\mathbf E}$.
\begin{rem1} 
It is well known that if the basis $(\mathbf e_i)$ is $1$-symmetric and $\mathbf E$ is not isometric to a Hilbert space, then any isometry of $\mathbf E$ is given by a signed permutation of $(\mathbf e_i)$ (\cite{russian} in the real case, \cite{T} in the complex case). So it  is tempting to believe that 
if $\mathbf E^*$ is strictly convex, $(\mathbf e_i)$ is $1$-symmetric and $\mathbf E$ is not a Hilbert space then, for any combinatorial families $\F_1$, $\F_2$,  every isometry $J: X_{\F_1,\mathbf E}\to X_{\F_2,\mathbf E}$ is a signed permutation of $(e_i)$. This is however not true, see Section \ref{Exs}.
\end{rem1}

\begin{rem2} If one wants that for \emph{any} (compact) combinatorial family $\F$, all isometries of $X_{\F,\mathbf E}$ are signed permutation of $(e_i)$, then it is necessary to assume that ${\rm span}(\mathbf e_j,\mathbf e_k)$ is not isometrically equal to $\ell_2(2)$ for all $j\neq k$. Indeed, if ${\rm span}(\mathbf e_j,\mathbf e_k)=\ell_2(2)$ for some $j,k$, set $\F:=\{ F\in{\rm FIN};\; F\subseteq \{ j,k\}\;{\rm or}\; F=\{ n\}\;\hbox{for some $n\neq j,k$}\}$ and consider the same operator as in Example \ref{counter}, with $\{j,k\}$ in place of $\{ 1,2\}$. In fact, this example works for any compact family $\F$ such that $\{ j,k\}\in \F^{MAX}$ and every $G\neq \{ j,k\}$ in $ \F^{MAX}$ is disjoint from $\{ j,k\}$.
\end{rem2}

\medskip As in \cite{BFT} and \cite{BP}, Theorem \ref{main} (1) has the following consequence.
\begin{corollary} Let $\F_1$ and $\F_2$ be two combinatorial families, and let $1<p\neq 2<\infty$. Then, the following are equivalent: 
\begin{enumerate}
    \item[\rm (1)] $X_{\F_1,p}$ and $X_{\F_2,p}$ are isometric;
    \item[\rm (2)] $X_{\F_1,p}^*$ and $X_{\F_2,p}^*$ are isometric;
    \item[\rm (3)] there exists a permutation $\pi$ of $\N$ such that $\pi(\mathcal F_1)=\mathcal F_2$.
    \end{enumerate}
\end{corollary}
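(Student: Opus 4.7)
The plan is to reduce everything to Theorem \ref{nonsense} applied with $\mathbf E = \ell_p$, after disposing of the two easy directions.

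First I would handle (3) $\Rightarrow$ (1) and (3) $\Rightarrow$ (2). Given a permutation $\pi$ of $\N$ with $\pi(\F_1) = \F_2$, one checks by direct computation that the map $J: c_{00} \to c_{00}$ sending $e_i$ to $e_{\pi(i)}$ extends to an isometry $X_{\F_1, p} \to X_{\F_2, p}$: for $x \in c_{00}$,
\[
\sum_{i \in F} |(Jx)(i)|^p = \sum_{j \in \pi^{-1}(F)} |x(j)|^p,
\]
and $\pi^{-1}$ sends $\F_2$ bijectively onto $\F_1$, so the supremum over $F \in \F_2$ equals $\|x\|_{\F_1, p}^p$. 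The adjoint $J^*$ then gives (2). The equivalence (1) $\Leftrightarrow$ (2) is standard, since the adjoint of a surjective linear isometry between Banach spaces is again one (and is moreover automatically $w^*$-$\,w^*$ continuous).

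The main direction is (2) $\Rightarrow$ (3) (which, together with (1) $\Leftrightarrow$ (2), also covers (1) $\Rightarrow$ (3)). The idea is to verify the hypotheses of Theorem \ref{nonsense} with $\mathbf E = \ell_p$, so that any isometry between the dual spaces is a signed permutation of the coordinate functionals, and in particular forces the existence of a permutation $\pi$ with $\pi(\F_1) = \F_2$. Concretely, I would note the following: $\mathbf E^* = \ell_{p'}$ is strictly convex since $1 < p' < \infty$; hypothesis (H) holds for $\ell_p$ when $p \neq 2$ by the classical description of isometric embeddings of two-dimensional $\ell_p$ into $\ell_p$ (as recalled in the text after the statement of Theorem \ref{nonsense}); and the basis $(e_i)$ of $X_{\F, p}$ is shrinking for \emph{any} combinatorial family $\F$ (recorded in Section \ref{blabla}), so $Z_{\F_i, p} = X_{\F_i, p}^*$ for $i = 1, 2$ and the condition $J(Z_{\F_1, p}) = Z_{\F_2, p}$ holds trivially. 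Therefore Theorem \ref{nonsense} produces a permutation $\pi$ with $\pi(\F_1) = \F_2$.

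There is essentially no obstacle here: the corollary is really just a packaging of Theorem \ref{nonsense} for $\mathbf E = \ell_p$, combined with the observation that $(e_i)$ is always shrinking in this setting so the technical alternative ``$w^*$-$w^*$ continuous or preserves $Z$'' is automatic. The one point to keep straight is the direction of the permutation when passing through the adjoint in (1) $\Rightarrow$ (3): an isometry $J : X_{\F_1, p} \to X_{\F_2, p}$ yields an isometry $J^* : X_{\F_2, p}^* \to X_{\F_1, p}^*$, so Theorem \ref{nonsense} produces a permutation $\sigma$ with $\sigma(\F_2) = \F_1$, and one then takes $\pi = \sigma^{-1}$.
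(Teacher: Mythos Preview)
Your proof is correct and follows essentially the same route as the paper: the cycle $(3)\Rightarrow(1)\Rightarrow(2)\Rightarrow(3)$, with $(2)\Rightarrow(3)$ coming from Theorem~\ref{nonsense} applied to $\mathbf E=\ell_p$ and the shrinking property of $(e_i)$ ensuring $Z_{\F_i,p}=X_{\F_i,p}^*$. One small wording issue: you write that ``$(1)\Leftrightarrow(2)$ is standard'', but only $(1)\Rightarrow(2)$ is a general Banach-space fact; isometric duals do not automatically give isometric preduals. This does not affect your argument, since you only use $(1)\Rightarrow(2)$ in the cycle and obtain $(2)\Rightarrow(1)$ via $(2)\Rightarrow(3)\Rightarrow(1)$.
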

\begin{proof}
    (1) implies (2) for any two Banach spaces, and (2) implies (3) by Theorem \ref{nonsense} because $(e_i)$ is a shrinking basis of $X_{\F,p}$ for any combinatorial family $\F$. Finally, if $\pi$ is as in (3), it is easily seen that the formula $Je_i:= e_{\pi(i)}$ defines an isometry from $(c_{00}, \Vert\,\cdot\,\Vert_{\F_1,p})$ onto $(c_{00}, \Vert\,\cdot\,\Vert_{\F_2,p})$, which extends to an isometry $J: X_{\F_1,p}\to X_{\F_2,p}$.
    \end{proof}
    
\smallskip
The condition that there exists a permutation $\pi$ such that $\pi(\F_1)=\F_2$ is very restrictive. Indeed, it is shown in \cite{BP} that if this happens and if the combinatorial families $\F_1$ and $\F_2$ are compact and spreading, then in fact $\F_1=\F_2$. So we get
\begin{corollary}  Let the combinatorial families $\F_1$ and $\F_2$ be compact and spreading, and let $1<p\neq 2<\infty$. Then $X_{\F_1,p}^*$ and $X_{\F_2,p}^*$ are isometric {\rm (}if and{\rm )} only if $\F_1=\F_2$.
\end{corollary}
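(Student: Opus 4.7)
The plan is to deduce this corollary as a short chain from two previously established facts: the preceding corollary, which identifies isometry of the dual spaces $X_{\F_1,p}^*$ and $X_{\F_2,p}^*$ with the existence of a permutation $\pi$ of $\N$ with $\pi(\F_1)=\F_2$, and the rigidity statement from \cite{BP} recalled just before the corollary, which says that for \emph{compact spreading} families, any such $\pi$ forces $\F_1=\F_2$. So on paper this really is a two-line deduction, and I do not anticipate an obstacle.

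Concretely, I would argue as follows. The ``if'' direction is immediate, since $\F_1=\F_2$ gives $X_{\F_1,p}^*=X_{\F_2,p}^*$. For the nontrivial direction, suppose that $X_{\F_1,p}^*$ and $X_{\F_2,p}^*$ are isometric. Invoking the previous corollary (which is itself the reduction of Theorem \ref{nonsense} / Theorem \ref{main}(1) to permutation-equivalence, and which uses that $(e_i)$ is a shrinking basis of $X_{\F,p}$ so that ``$\pi(\F_1)=\F_2$'' is really equivalent to ``$X_{\F_1,p}^*$ and $X_{\F_2,p}^*$ are isometric''), I obtain a permutation $\pi:\N\to\N$ with $\pi(\F_1)=\F_2$. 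I then feed this $\pi$ into the rigidity statement for compact spreading families from \cite{BP} to conclude $\F_1=\F_2$.

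If one wanted to reprove the \cite{BP} rigidity rather than cite it, the main obstacle would be to show that any permutation $\pi:\N\to\N$ mapping a compact spreading family $\F_1$ onto another such family $\F_2$ must be the identity on each ``level set'' $\F_1$ sees. The intuition is that compactness plus the spreading condition encode the family as a kind of rank function depending only on the ordering of $\min,\min_2,\dots$ of the elements, so any relabelling changes the family unless it preserves the order structure. One would pick the smallest $j$ with $\pi(j)\neq j$ and use the spreading property applied to carefully chosen finite sets containing $j$ and $\pi(j)$ to derive a contradiction. Since the paper cites this step from \cite{BP}, I would not reproduce it and would keep the present proof at the two-line level above.
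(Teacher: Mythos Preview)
Your proposal is correct and follows exactly the paper's approach: apply the preceding corollary to obtain a permutation $\pi$ with $\pi(\F_1)=\F_2$, then invoke the rigidity result from \cite{BP} (that for compact spreading families such a $\pi$ forces $\F_1=\F_2$). The paper's own argument is precisely this two-line deduction, so there is nothing to add or change.
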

\smallskip In what follows, the symbols $\mathbf E$ and $\F$ always denote (respectively) a Banach space with a normalized $1$-unconditional basis $(\mathbf e_i)_{i\in\N}$ and a combinatorial family. We will first prove some simple general facts concerning $X_{\F,\mathbf E}$, and then prove Theorem \ref{nonsense}, a general version of Proposition \ref{l1}, and a variant of Theorem \ref{nonsense}.

\subsection{Norm inequalities and equalities} We first note that if $x\in c_{00}$ and if $\mathbf x=\sum_{i\in\N} x(i) \mathbf e_i$ is the copy of $x$ living in $\mathbf E$, then \[ \Vert x\Vert_{\mathcal F,\mathbf E}\leqslant \Vert \mathbf x\Vert_{\mathbf E},\]
by $1$-unconditionality of the basis $(\mathbf e_i)$. 
So there is a well defined ``canonical injection'' $\Lambda:\mathbf E\to X_{\mathcal F,\mathbf E}$ with $\Vert \Lambda\Vert\leqslant 1$: if $u\in \mathbf E$, then 
\[ \Lambda u=\sum_{i=1}^\infty \langle  \mathbf e_i^*,  u\rangle \, e_i,\]
where the series is $\Vert\,\cdot\,\Vert\,$-$\,$convergent in $X_{\mathcal F, \mathbf E}$. Consequently, to any functional $x^*\in X_{\mathcal F,\mathbf E}^*$, there is a canonically associated functional $\mathbf x^*\in \mathbf E^*$ such that $\Vert \mathbf x^*\Vert_{\mathbf E^*}\leqslant \Vert x^*\Vert$, namely $\mathbf x^*:=\Lambda^*x^*$. Explicitely,
\[ \mathbf x^*=\sum_{i=1}^\infty \langle x^*,e_i\rangle \, \mathbf e_i^*,\]
where the series is $w^*$-$\,$convergent in $\mathbf E^*$. The operator $\Lambda^*:x^*\mapsto \mathbf x^*$ is injective because $\Lambda$ has a dense range. The following simple facts will be essential for the proof of Theorem~\ref{nonsense}.

\begin{lemma}\label{superbasic} If $F\in\mathcal F$, then $\Vert x\Vert =\Vert \mathbf x\Vert_{\mathbf E}$ for all $x\in {\rm span}\,( e_i;\; i\in F)$ and $\Vert x^*\Vert =\Vert \mathbf x^*\Vert_{\mathbf E^*}$ for all $x^*\in{\rm span}\,( e_i^*; \; i\in F)$.
\end{lemma}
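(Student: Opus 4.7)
The lemma has two parts, and both reduce to exploiting the definition of $\Vert\,\cdot\,\Vert_{\mathcal F,\mathbf E}$ together with the $1$-unconditionality of $(\mathbf e_i)$. My plan is to handle the first equality directly and then bootstrap it (via the canonical injection $\Lambda$) to obtain the dual equality.

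For the first equality, let $x=\sum_{i\in F} x(i)e_i$ with $F\in\mathcal F$. The inequality $\Vert x\Vert\geqslant \Vert \mathbf x\Vert_{\mathbf E}$ is trivial: since $F$ itself belongs to $\mathcal F$ and $\mathbf x$ is supported on $F$, the set $F$ is an admissible choice in the supremum defining $\Vert x\Vert$, and gives the value $\Vert P_F(\mathbf x)\Vert_{\mathbf E}=\Vert \mathbf x\Vert_{\mathbf E}$. The reverse inequality $\Vert x\Vert\leqslant \Vert \mathbf x\Vert_{\mathbf E}$ is exactly the general estimate $\Vert x\Vert_{\mathcal F,\mathbf E}\leqslant \Vert \mathbf x\Vert_{\mathbf E}$ noted at the start of the subsection, which is nothing but $1$-unconditionality of $(\mathbf e_i)$ applied to each projection $P_G(\mathbf x)$.

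For the second equality, let $x^*=\sum_{i\in F} a_i e_i^*\in{\rm span}(e_i^*;\,i\in F)$; then its ``copy'' is $\mathbf x^*=\sum_{i\in F} a_i \mathbf e_i^*\in\mathbf E^*$. One inequality, $\Vert \mathbf x^*\Vert_{\mathbf E^*}\leqslant \Vert x^*\Vert$, is the general bound $\Vert \Lambda^*\Vert\leqslant 1$ already recorded in the text. For the reverse direction, pick any $y\in c_{00}$ with $\Vert y\Vert_{\mathcal F,\mathbf E}\leqslant 1$ and set $z:=P_F y$. Because $x^*$ is supported on $F$, $\langle x^*,y\rangle=\langle x^*,z\rangle$. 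Since $(e_i)$ is $1$-unconditional in $X_{\mathcal F,\mathbf E}$, $\Vert z\Vert_{\mathcal F,\mathbf E}\leqslant \Vert y\Vert_{\mathcal F,\mathbf E}\leqslant 1$; and since $z$ is supported on $F\in\mathcal F$, the first equality (already proved) gives $\Vert \mathbf z\Vert_{\mathbf E}=\Vert z\Vert_{\mathcal F,\mathbf E}\leqslant 1$. Now pair through: $\langle x^*,z\rangle=\sum_{i\in F} a_i z(i)=\langle \mathbf x^*,\mathbf z\rangle$, so
\[
|\langle x^*,y\rangle|=|\langle \mathbf x^*,\mathbf z\rangle|\leqslant \Vert \mathbf x^*\Vert_{\mathbf E^*}\,\Vert \mathbf z\Vert_{\mathbf E}\leqslant \Vert \mathbf x^*\Vert_{\mathbf E^*}.
\]
Taking the supremum over $y$ in the unit ball of $X_{\mathcal F,\mathbf E}$ yields $\Vert x^*\Vert\leqslant \Vert \mathbf x^*\Vert_{\mathbf E^*}$.

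There is no real obstacle here: the only thing one must be careful about is the ``identifications'' between $X_{\mathcal F,\mathbf E}$ and $\mathbf E$ under $\Lambda$ and $\Lambda^*$, and the repeated invocation of $1$-unconditionality, once to bound $P_G(\mathbf x)$ inside $\mathbf E$ and once to bound $P_F y$ inside $X_{\mathcal F,\mathbf E}$. The lemma is really just the statement that on the ``$F$-block'' subspaces, the two norms coincide, which one would expect from the definition.
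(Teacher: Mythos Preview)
Your proof is correct and follows essentially the same approach as the paper's: both arguments reduce the dual inequality $\Vert x^*\Vert\leqslant \Vert \mathbf x^*\Vert_{\mathbf E^*}$ to the pairing $\langle x^*,y\rangle=\langle \mathbf x^*,P_F(\mathbf y)\rangle$ and the bound $\Vert P_F(\mathbf y)\Vert_{\mathbf E}\leqslant \Vert y\Vert$. The only cosmetic difference is that the paper invokes this last bound directly from the definition of $\Vert\,\cdot\,\Vert_{\mathcal F,\mathbf E}$ (since $F\in\mathcal F$ appears in the supremum), whereas you route it through the first part of the lemma together with $1$-unconditionality of $(e_i)$; the content is the same.
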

\begin{proof} The first part is clear by definition of $\Vert\,\cdot\,\Vert=\Vert\,\cdot\,\Vert_{\mathcal F, \mathbf E}$. If $x^*\in{\rm span}\,( e_i^*; \; i\in F)$ then, for all $x\in X_{\mathcal F,\mathbf E}$, we have 
\[  \langle x^*, x\rangle= \langle x^*, P_F(x)\rangle=\langle \mathbf x^*, P_F(\mathbf x)\rangle,\]
and hence $\vert  \langle x^*, x\rangle\vert\leqslant \Vert \mathbf x^*\Vert_{\mathbf E^*}\, \Vert P_F(\mathbf x)\Vert_{\bf E}\leqslant \Vert \mathbf x^*\Vert_{\mathbf E^*}\, \Vert x\Vert$. So $\Vert x^*\Vert \leqslant \Vert \mathbf x^*\Vert_{\mathbf E^*}$ for all $x^*\in {\rm span}\,( e_i^*; \; i\in F)$; and the reverse inequality is true for any $x^*\in X_{\mathcal F,\mathbf E}^*$.
\end{proof}

\begin{corollary}\label{corsuperbasic} If $A\in\overline{\F\,}$, then $\Vert \Lambda \mathbf x\Vert =\Vert \mathbf x\Vert_{\bf E}$ for every $\mathbf x\in\mathbf E$ supported on $A$, and $\Vert x^*\Vert=\Vert \mathbf x^*\Vert_{\mathbf E^*}$ for every $x^*\in X_{\F,\mathbf E}^*$ supported on $A$.
\end{corollary}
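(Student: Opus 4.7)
The plan is to reduce both claims to Lemma \ref{superbasic} by approximating via finite subsets of $A$, exploiting the characterization (stated early in the excerpt) that $A\in\overline{\F\,}$ if and only if every finite subset of $A$ belongs to $\F$, together with the $1$-unconditionality of the bases $(\mathbf e_i)$ and $(e_i)$.

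For the first equality, I would fix $\mathbf x\in\mathbf E$ supported on $A$ and write $\mathbf x=\sum_{i\in A}\langle\mathbf e_i^*,\mathbf x\rangle\,\mathbf e_i$, the series converging in $\mathbf E$ because $(\mathbf e_i)$ is a Schauder basis. For every finite $F\subset A$ one has $F\in\F$, so Lemma \ref{superbasic} yields $\Vert P_F(\Lambda\mathbf x)\Vert=\Vert P_F(\mathbf x)\Vert_{\mathbf E}$. The $1$-unconditionality of $(e_i)$ in $X_{\F,\mathbf E}$ then gives $\Vert P_F(\mathbf x)\Vert_{\mathbf E}\le\Vert\Lambda\mathbf x\Vert$, while as $F$ exhausts the support of $\mathbf x$ we have $P_F(\mathbf x)\to\mathbf x$ in $\mathbf E$, so the left-hand side tends to $\Vert\mathbf x\Vert_{\mathbf E}$. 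Hence $\Vert\mathbf x\Vert_{\mathbf E}\leqslant \Vert\Lambda\mathbf x\Vert$; the reverse inequality is automatic from $\Vert\Lambda\Vert\leqslant 1$.

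For the second equality, I would again use finite approximation but dually. Given $x^*\in X_{\F,\mathbf E}^*$ supported on $A$, it suffices by density to estimate $\vert\langle x^*,y\rangle\vert$ for a finitely supported $y\in X_{\F,\mathbf E}$, say with $\operatorname{supp}(y)\subset G$ finite. Setting $F:=G\cap A\in\F$, the fact that $\langle x^*,e_i\rangle=0$ for $i\notin A$ gives $\langle x^*,y\rangle=\langle x^*,P_F(y)\rangle$. Now $P_F(y)$ is supported on $F\in\F$, so the identity $\langle x^*,P_F(y)\rangle=\langle\mathbf x^*,P_F(\mathbf y)\rangle$ (which follows from $\Lambda P_F(\mathbf y)=P_F(y)$ together with $\mathbf x^*=\Lambda^*x^*$) together with Lemma \ref{superbasic} and $1$-unconditionality of $(e_i)$ yields
\[\vert\langle x^*,y\rangle\vert\leqslant \Vert\mathbf x^*\Vert_{\mathbf E^*}\Vert P_F(\mathbf y)\Vert_{\mathbf E}=\Vert\mathbf x^*\Vert_{\mathbf E^*}\Vert P_F(y)\Vert\leqslant \Vert\mathbf x^*\Vert_{\mathbf E^*}\Vert y\Vert.\]
Thus $\Vert x^*\Vert\leqslant\Vert\mathbf x^*\Vert_{\mathbf E^*}$, and the opposite inequality is the general bound $\Vert\Lambda^*\Vert\leqslant 1$.

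There is no real obstacle here; the only subtlety is the bookkeeping around the canonical copy map $\Lambda$ and its adjoint, specifically the identity $\langle x^*,z\rangle=\langle\mathbf x^*,\mathbf z\rangle$ for finitely supported $z$, which must be invoked to bring the $\mathbf E^*$-norm into play. Everything else is a routine limit argument over finite $F\subset A$, made possible precisely because membership of $A$ in $\overline{\F\,}$ guarantees $F\in\F$ for each such $F$.
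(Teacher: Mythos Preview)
Your proof is correct and, for the first equality, essentially identical to the paper's: both truncate to finite $F\subset A$ (hence $F\in\F$), invoke Lemma \ref{superbasic}, and pass to the limit using $\Vert\Lambda\Vert\leqslant 1$ for the reverse inequality.

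For the dual equality the two arguments diverge slightly. You bound $\vert\langle x^*,y\rangle\vert$ directly for finitely supported $y$, reducing via the support condition to $P_F(y)$ with $F=\mathrm{supp}(y)\cap A\in\F$; this is the same manoeuvre as in the paper's proof of Lemma \ref{superbasic} itself, and it avoids any limiting argument on the functional side. The paper instead applies Lemma \ref{superbasic} to the truncations $P_{F_n}x^*$ (with $F_n=\llbracket 1,n\rrbracket\cap A$) to get $\Vert P_{F_n}x^*\Vert=\Vert P_{F_n}\mathbf x^*\Vert_{\mathbf E^*}$, and then uses the sandwich $\Vert x^*\Vert\leqslant\liminf\Vert P_{F_n}x^*\Vert\leqslant\limsup\Vert P_{F_n}x^*\Vert\leqslant\Vert x^*\Vert$ (from $w^*$-lower semicontinuity and $1$-unconditionality) on both sides. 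Your route is marginally more elementary; the paper's is more symmetric with the first part. Neither carries any real advantage.
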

\begin{proof} Let $\mathbf x\in \mathbf E$ and $x^*\in X_{\F,\mathbf E}^*$ be supported on $A$. For $n\in\N$, set $F_n:= \llbracket 1,n\rrbracket\cap A$. Then $F_n\in \F$, so $\Vert \Lambda P_{F_n}\mathbf x\Vert=\Vert P_{F_n}\mathbf x\Vert_{\mathbf E}$ and $\Vert P_{F_n}x^*\Vert=\Vert P_{F_n}\mathbf x^*\Vert_{\mathbf E^*}$. Since $P_{F_n}\mathbf x\to \mathbf x$ in norm as $n\to\infty$, we immediately get $\Vert \Lambda \mathbf x\Vert =\Vert \mathbf x\Vert_{\bf E}$. Moreover, by $1$-unconditionality and since $P_{F_n}x^*\xrightarrow{w^*} x^*$, we have $\Vert x^*\Vert \leqslant \liminf \Vert P_{F_n} x^*\Vert \leqslant \limsup \Vert P_{F_n} x^*\Vert\leqslant \Vert x^*\Vert$, so $\Vert P_{F_n} x^*\Vert\to \Vert x^*\Vert$. Similarly $\Vert P_{F_n} \mathbf x^*\Vert_{\mathbf E^*}\to \Vert \mathbf x^*\Vert_{\mathbf E^*}$, so $\Vert x^*\Vert=\Vert \mathbf x^*\Vert_{\mathbf E^*}$.
\end{proof}

\subsection{Shrinking and boundedly complete}
In the next two facts, we give two general conditions ensuring that the basis $(e_i)$ of $X_{\F,\mathbf E}$ is shrinking, and a condition ensuring that it is not boundedly complete. 

\begin{fact}\label{domi} If either $(\mathbf e_i)$ is a shrinking basis of $\mathbf E$ or the family $\F$ is compact, then $(e_i)$ is a shrinking basis of $X_{\mathcal F,\mathbf E}$.
\end{fact}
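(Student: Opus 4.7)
The plan is to verify that $(e_i)$ is shrinking by showing that every normalized block basic sequence $(u_n)$ of $(e_i)$ in $X_{\F,\mathbf E}$ is weakly null, which is the standard criterion for a Schauder basis to be shrinking. I treat the two hypotheses separately.

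For case (b), when $\F$ is compact: I would introduce the natural isometric embedding $T\colon X_{\F,\mathbf E}\hookrightarrow C(\F,\mathbf E)$ defined on $c_{00}$ by $(Tx)(F)=P_F\mathbf x$. Continuity of $Tx$ on the compact metric space $\F\subset\{0,1\}^\N$ for $x\in c_{00}$ follows because $P_F\mathbf x$ only depends on the finite intersection $F\cap\text{supp}(x)$; the identity $\|Tx\|_\infty=\|x\|_{\F,\mathbf E}$ is built into the definition of the norm. For a normalized block basis $(u_n)$, the images $(Tu_n)$ are uniformly bounded in $C(\F,\mathbf E)$, and since each $F\in\F$ is finite, $(Tu_n)(F)=P_F\mathbf u_n=0$ for $n$ large enough, so $Tu_n\to 0$ pointwise on $\F$. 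A dominated-convergence argument—legitimate because $\F$ is countable compact metric (hence scattered), so every element of $C(\F,\mathbf E)^*$ decomposes as a weighted sum of point-evaluations in $\mathbf E^*$—yields $Tu_n\to 0$ weakly in $C(\F,\mathbf E)$. Extending any $x^*\in X_{\F,\mathbf E}^*$ to an element of $C(\F,\mathbf E)^*$ by Hahn-Banach then gives $\langle x^*,u_n\rangle\to 0$.

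For case (a), when $(\mathbf e_i)$ is shrinking in $\mathbf E$: given a normalized block $(u_n)$ and $x^*\in X_{\F,\mathbf E}^*$, I set $\mathbf x^*:=\Lambda^*x^*\in\mathbf E^*$, so that $\langle x^*,u_n\rangle=\langle\mathbf x^*,\mathbf u_n\rangle$. If $(\mathbf u_n)$ is bounded in $\mathbf E$, the shrinking of $(\mathbf e_i)$ makes it weakly null in $\mathbf E$ and we conclude directly. The delicate subcase is $\|\mathbf u_n\|_{\mathbf E}\to\infty$, which is not excluded a priori since $\Lambda\colon\mathbf E\to X_{\F,\mathbf E}$ is generally not an isomorphism. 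To handle it I would exploit that $\mathbf x^*$ inherits from $x^*\in X_{\F,\mathbf E}^*$ the stronger compatibility estimate $|\langle\mathbf x^*,\mathbf y\rangle|\le\|x^*\|\sup_{F\in\F}\|P_F\mathbf y\|_{\mathbf E}$, combined with the $\mathbf E^*$-tail decay $\|\mathbf x^*|_{(n,\infty)}\|_{\mathbf E^*}\to 0$ afforded by shrinking, then decompose $\text{supp}(\mathbf u_n)$ into $\F$-pieces and apply Lemma~\ref{superbasic} on each to bound $|\langle\mathbf x^*,\mathbf u_n\rangle|$ by a sum of terms $\|\mathbf x^*|_F\|_{\mathbf E^*}$ tending to $0$.

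The main obstacle I foresee is precisely this unbounded subcase of (a), where the straightforward weak-nullity transfer from $\mathbf E$ breaks down and one must rely on the finer interplay between the combinatorial $\F$-compatibility of $\mathbf x^*$ and the tail decay coming from the shrinking of $(\mathbf e_i)$; case (b) and the bounded part of (a) go through cleanly via the strategies above.
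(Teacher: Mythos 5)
Your treatment of the compact case is essentially sound, though it routes through the representation of $C(\F,\mathbf E)^*$ as atomic $\mathbf E^*$-valued measures on the countable compact $\F$, which is heavier machinery than the paper uses (the paper instead shows, via Milman's converse to Krein--Milman, that extreme points of $B_{X_{\F,\mathbf E}^*}$ are finitely supported when $\F$ is compact, and then applies Rainwater's theorem). The real problem is the shrinking-basis case, precisely in the subcase you flag, and your proposed repair does not close the gap: if you decompose ${\rm supp}(\mathbf u_n)=F_1\cup\dots\cup F_{m_n}$ into $\F$-pieces and estimate
\[ \vert\langle\mathbf x^*,\mathbf u_n\rangle\vert\leqslant\sum_{i=1}^{m_n}\Vert P_{F_i}\mathbf x^*\Vert_{\mathbf E^*}\,\Vert P_{F_i}\mathbf u_n\Vert_{\mathbf E}\leqslant\sum_{i=1}^{m_n}\Vert P_{F_i}\mathbf x^*\Vert_{\mathbf E^*},\]
then each summand tends to $0$ but the number of summands $m_n$ is uncontrolled, so the sum need not tend to $0$. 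Concretely, for $\mathbf E=\ell_2$ and $\F=\F_{\leqslant 1}$ (so $X_{\F,\mathbf E}=c_0$) the decomposition is into singletons and the bound becomes $\sum_{i\in{\rm supp}(u_n)}\vert\langle\mathbf x^*,\mathbf e_i\rangle\vert$, which for a general $\mathbf x^*\in\ell_2$ is not small even though the $\ell_2$-tails of $\mathbf x^*$ decay; what one really needs is that $x^*$ lies in the much smaller space $X_{\F,\mathbf E}^*=\ell_1$, and exploiting that via tail norms of $x^*$ in $X_{\F,\mathbf E}^*$ is circular, since the decay of those tails is exactly the shrinkingness one is trying to prove.

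The missing idea, which the paper uses to handle both cases at once, is Rainwater's theorem: it suffices to check $\langle x^*,u_n\rangle\to0$ for $x^*$ an \emph{extreme point} of $B_{X_{\F,\mathbf E}^*}$, and (from the proof of Lemma \ref{ext}, with no hypothesis on $\mathbf E$) every such $x^*$ has support $A\in\overline{\F\,}$. This is what kills the combinatorial explosion: writing $A_k=A\cap\llbracket1,k\rrbracket\in\F$ and using $P_{A_k}x^*\xrightarrow{w^*}x^*$, one gets $\vert\langle x^*,u_n\rangle\vert\leqslant\sup_{F\in\F}\vert\langle\mathbf x^*,P_F\mathbf u_n\rangle\vert$, so a \emph{single} $\F$-projection suffices for each $n$; and since $\Vert P_F\mathbf u_n\Vert_{\mathbf E}\leqslant\Vert u_n\Vert=1$, for any choice $F_n\in\F$ the vectors $P_{F_n}\mathbf u_n$ form a bounded block sequence of the shrinking basis $(\mathbf e_i)$, hence $\langle\mathbf x^*,P_{F_n}\mathbf u_n\rangle\to0$, which forces the suprema to tend to $0$. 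Without this reduction to extreme points your unbounded subcase remains open, so the proof as proposed is incomplete.
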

\begin{proof} It is enough to show that any normalized block sequence of $(e_i)$ is weakly null (see~\cite[Proposition 3.2.7]{AK}); so let us fix such a sequence  $(u_n)_{n\in\N}$. By Rainwater's Theorem (\cite{Rain}, see also \cite{Ph}), it is enough to show that $\langle x^*, u_n\rangle\to0$ for every extreme point $x^*$ of the unit ball of $X_{\F,\mathbf E}^*$. The key fact is that $A:={\rm supp}(x^*)$ belongs to $\overline{\F\,}$: this will follow from the proof of Lemma \ref{ext} below.

\smallskip Assume that the family $\F$ is compact. Then $x^*$ is finitely supported, so it is clear that $\langle x^*, u_n\rangle\to0$. 

\smallskip Now, assume that $(\mathbf e_i)$ is a shrinking basis of $\mathbf E$. For $k\in\N$, set $A_k:= A\cap \llbracket 1, k\rrbracket$. Then $A_k\in  \F$ and $P_{A_k} x^*\xrightarrow{w^*} x^*$. So, for any $u\in X_{\F,\mathbf E}$, we have 
\[ \vert\langle x^*, u\rangle\vert\leqslant \sup_{F\in\F}  \vert\langle P_Fx^*, u\rangle\vert= \sup_{F\in\F}  \vert\langle x^*, P_Fu\rangle\vert.\] 
Therefore, it is enough to show that 
\[ \sup_{F\in\mathcal F}\, \vert \langle x^*, P_F(u_n)\rangle\vert \xrightarrow{n\to\infty} 0.\]
But this is clear: indeed, if $(F_n)$ is any sequence in $\F$, then 
\[ \langle x^*, P_{F_n}(u_n)\rangle=\langle \mathbf x^*, P_{F_n}(\mathbf u_n)\rangle\xrightarrow{n\to\infty} 0,\] 
because $(P_{F_n}(\mathbf u_n))$ is a bounded block sequence of $(\mathbf e_i)$ and $(\mathbf e_i)$ is shrinking.
\end{proof}

\begin{remark} Fact \ref{domi} implies in particular that $(e_i)$ is a shrinking basis of $X_{\F}$ if $\F$ is compact, and that $(e_i)$ is always a shrinking basis of $X_{\mathcal F,p}$, $1<p<\infty$.
\end{remark}

\smallskip
\begin{fact} Assume that there exist a function $\Phi:\R_+\to \R_+$ and two continuous functions $\Psi_1, \Psi_2:\R_+\to\R_+$ with $\Psi_1(t)>0$ for all $t>0$ such that 
\[ \Psi_1\left(\sum_{i\in\N} \lambda_i\right)\leqslant \left\Vert \sum_{i\in\N} \Phi(\lambda_i) \mathbf e_i\right\Vert_{\mathbf E} \leqslant \Psi_2\left(\sum_{i\in\N} \lambda_i\right)\qquad\hbox{for every $(\lambda_i)\in c_{00}^+$}.\]

Then $(e_i)$ is not a boundedly complete basis of $X_{\mathcal F, \mathbf E}$ if $\F$ is compact.
\end{fact}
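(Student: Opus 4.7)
The plan is to reduce the fact to the known scalar case and then transfer the conclusion to $X_{\F,\mathbf E}$ using the two-sided inequality. Since $\F$ is compact, the basis $(e_i)$ of the scalar combinatorial Banach space $X_\F$ is not boundedly complete (recalled in the introduction: $X_\F$ embeds in $\mathcal C(\F)$, which is $c_0$-saturated because $\F$ is a countable compact metric space, and one then applies James's theorem to the $1$-unconditional basis $(e_i)$). So I may pick a sequence $(d_i)_{i\in\N}$ with $\sup_n \Vert \sum_{i\leqslant n} d_i e_i\Vert_{X_\F}<\infty$ but with $\sum d_i e_i$ divergent. Replacing $d_i$ by $|d_i|$ (harmless by $1$-unconditionality), I may assume $d_i\geqslant 0$; setting $M:=\sup_{F\in\F}\sum_{i\in F} d_i$, I have $M<\infty$ and $d_i\not\to 0$.

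Next I would set $c_i:=\Phi(d_i)$ and show that $(c_i)$ witnesses the failure of bounded completeness of $(e_i)$ in $X_{\F,\mathbf E}$. Note first that necessarily $\Phi(0)=0$: otherwise the sum $\sum_{i\in\N} \Phi(\lambda_i)\mathbf e_i$ would contain infinitely many copies of $\Phi(0)\mathbf e_i$ and not lie in $\mathbf E$. For boundedness of the partial sums, fix $n\in\N$ and $F\in\F$; since $F\cap[1,n]\in\F$ by heredity, the upper half of the assumption, applied to the finitely supported sequence $(d_i\ind_{i\in F\cap[1,n]})_{i\in\N}$, would give
\[ \Bigl\Vert\sum_{i\in F\cap[1,n]}\Phi(d_i)\,\mathbf e_i\Bigr\Vert_{\mathbf E}\leqslant \Psi_2\Bigl(\sum_{i\in F\cap[1,n]} d_i\Bigr)\leqslant \max_{s\in[0,M]}\Psi_2(s), \]
which is finite by continuity of $\Psi_2$. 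Taking the supremum over $F\in\F$ would yield a uniform bound on $\Vert\sum_{i\leqslant n} c_i e_i\Vert_{\F,\mathbf E}$.

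Finally, to see that the series does not converge I would argue that $c_i\not\to 0$. Applying the assumption to a sequence supported at a single index shows that $\Phi(t)\geqslant \Psi_1(t)>0$ for every $t>0$. Fix $\delta>0$ and an infinite set $\Lambda\subset\N$ with $d_i\geqslant \delta$ for $i\in\Lambda$; since $\{i\}\in\F$, we also have $d_i\leqslant M$, so $d_i\in[\delta,M]$ for $i\in\Lambda$. Continuity and positivity of $\Psi_1$ on $(0,\infty)$ give $\delta':=\min_{s\in[\delta,M]}\Psi_1(s)>0$, hence $c_i=\Phi(d_i)\geqslant \delta'$ on $\Lambda$. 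Since $\Vert c_i e_i\Vert_{\F,\mathbf E}=|c_i|$, the individual terms of the series $\sum c_i e_i$ do not tend to $0$, so the series cannot converge in $X_{\F,\mathbf E}$ even though its partial sums are bounded, as desired. The only genuine point is to exploit the lower bound $\Phi\geqslant \Psi_1$ so that the $c_i$'s do not collapse to $0$; everything else is routine bookkeeping with the two-sided inequality.
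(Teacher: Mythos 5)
Your boundedness half is fine and coincides with the paper's first estimate: applying the upper inequality to $(d_i\ind_{i\in F\cap[1,n]})$ and taking the supremum over $F\in\F$ bounds the partial sums of $\sum\Phi(d_i)e_i$ by $\sup_{[0,M]}\Psi_2$. The gap is in the non-convergence half, which rests entirely on the claim that your witness satisfies $d_i\not\to 0$. Divergence of $\sum d_ie_i$ only means that the partial sums are not Cauchy; it does not imply $d_i\not\to 0$, and in general \emph{no} witness with $d_i\not\to 0$ exists. Indeed, if $d_i\geqslant\delta>0$ on an infinite set $\Lambda$ and $\sum_{i\in F}d_i\leqslant M$ for all $F\in\F$, then $\vert F\cap\Lambda\vert\leqslant M/\delta$ for every $F\in\F$; for $\F=\Sc_1$ this is impossible for any infinite $\Lambda$ (take $F\subset\Lambda$ with $\vert F\vert\leqslant\min F$ and $\vert F\vert$ arbitrarily large). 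So for the Schreier space every sequence with bounded partial sums tends to $0$, and your argument that the terms $c_ie_i=\Phi(d_i)e_i$ stay bounded away from $0$ cannot get started.

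The repair --- and this is what the paper does --- is to quantify the failure of the Cauchy condition on blocks rather than on individual terms: non-convergence with bounded partial sums yields $\varepsilon>0$ and successive sets $F_1<F_2<\cdots$ in $\F$ with $\sum_{i\in F_n}d_i\geqslant\varepsilon$, while $\sum_{i\in F}d_i\leqslant C$ for all $F\in\F$. Applying the lower inequality to the finitely supported sequence $(d_i\ind_{i\in F_n})$ gives $\bigl\Vert\sum_{i\in F_n}\Phi(d_i)\mathbf e_i\bigr\Vert_{\mathbf E}\geqslant\inf_{[\varepsilon,C]}\Psi_1>0$ (positive by continuity and positivity of $\Psi_1$ on the compact interval $[\varepsilon,C]$), so the partial sums of $\sum\Phi(d_i)e_i$ are not Cauchy in $X_{\F,\mathbf E}$. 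This is where the hypothesis on $\Psi_1$ is genuinely used --- on whole blocks, not just at singletons as in your argument.
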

\begin{proof} As mentioned in the introduction, $(e_i)$ is not a boundedly complete basis of the combinatorial Banach space $X_\F$. This means that one can find a sequence $(\lambda_i)\in \mathbb K^\N$ such that the partial sums of the series $\sum \lambda_i e_i$ are bounded in $X_\F$ but do not form a Cauchy sequence in $X_\F$. So there exist two constants $C<\infty$ and $\varepsilon >0$ and a sequence $(F_n)\subset\mathcal F$ with $F_1<F_2<\cdots $ such that, on the one hand,
\[ \sum_{i\in F} \vert \lambda_i\vert \leqslant  C\qquad\hbox{for every $F\in\mathcal F$}\]
and, on the other hand,
\[ \sum_{i\in F_n} \vert \lambda_i\vert \geqslant  \varepsilon\qquad\hbox{for all $n\in\N$}.\]

Replacing $\lambda_i$ with $\vert\lambda_i\vert$, we may assume that $\lambda_i\in\R^+$. Then, using the assumptions on $\mathbf E$, we get 
\[ \left\Vert \sum_{i\in F} \Phi(\lambda_i) \mathbf e_i\right\Vert_{\mathbf E} \leqslant  M:=\sup_{[0, C]} \Psi_2<\infty\qquad\hbox{for every $F\in\mathcal F$}\]
and
\[ \left\Vert \sum_{i\in F_n} \Phi(\lambda_i) \mathbf e_i\right\Vert_{\mathbf E} \geqslant  \alpha:=\inf_{[\varepsilon, C]} \Psi_1>0 \qquad\hbox{for all $n\in\N$}.\]

It follows that the partial sums of the series $\sum \Phi(\lambda_i) e_i$ are bounded in $X_{\mathcal F,\mathbf E}$ but do not form a Cauchy sequence; and hence that $(e_i)$ is not a boundedly complete basis of $X_{\mathcal F,\mathbf E}$.
 \end{proof}
 
 \begin{rem1} The above fact applies if $\mathbf E$ is an Orlicz sequence space $h_M$ endowed with the Luxemburg norm (see e.g. \cite{LT}), taking as $\Phi$ the inverse of the (strictly increasing) Orlicz function $M$ and setting $\Psi_1(t):=\min(1,t)$ and $\Psi_2(t):=\max(1,t)$.
 \end{rem1}
 
 \begin{rem2} It is rather plausible that $(e_i)$ is in fact never boundedly complete when $\F$ is compact, and it might even happen that $X_{\F,\mathbf E}$ is always $c_0\,$-$\,$saturated.  As mentioned in the introduction, this is true if $\mathbf E=\ell_1$; and this is also true if $\mathbf E=\ell_p$, $1<p<\infty$ and $\mathcal F$ is the Schreier family 
 $\mathcal S_1$ (\cite{BiLau}).
 \end{rem2}

\subsection{Extreme points} To prove Theorem \ref{nonsense}, we will follow  the strategy used in \cite{BFT} in the case of $X_\F^*$. The starting point is that isometries send extreme points of the unit ball to extreme points of the unit ball. So we first have to determine the extreme points of the unit ball of $X_{\mathcal F,\mathbf E}^*$; which is done in the next lemma.

\begin{lemma}\label{ext} Assume that $\mathbf E^*$ is strictly convex. Then, a functional $x^*\in X_{\mathcal F,\mathbf E}^*$ is an extreme point of $B_{X_{\mathcal F,\mathbf E}^*}$ if and only if ${\rm supp}(x^*)\in\overline{\F\,}$ and $\Vert \mathbf x^*\Vert_{\mathbf E^*}=1$.

\end{lemma}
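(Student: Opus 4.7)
The plan is to prove the two implications separately. For sufficiency, assume $A:={\rm supp}(x^*)\in\overline{\F\,}$ and $\Vert\mathbf x^*\Vert_{\mathbf E^*}=1$; Corollary~\ref{corsuperbasic} then gives $\Vert x^*\Vert=1$. If $x^*=\frac{1}{2}(y^*+z^*)$ in $B_{X_{\F,\mathbf E}^*}$, applying the contraction $\Lambda^*$ yields $\mathbf x^*=\frac{1}{2}(\mathbf y^*+\mathbf z^*)$ with $\Vert\mathbf y^*\Vert_{\mathbf E^*},\Vert\mathbf z^*\Vert_{\mathbf E^*}\leqslant 1$; strict convexity of $\mathbf E^*$ forces $\mathbf y^*=\mathbf z^*=\mathbf x^*$, and the injectivity of $\Lambda^*$ delivers $y^*=z^*=x^*$, so $x^*$ is extreme.

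For necessity, I will use a bipolar-plus-Milman argument. Introduce the coordinate operator $\iota_F:X_{\F,\mathbf E}\to\mathbf E$ defined by $\iota_F(x):=P_F\mathbf x$; it has norm $\leqslant 1$, and $B_{X_{\F,\mathbf E}}=\bigcap_{F\in\F}\iota_F^{-1}(B_{\mathbf E})$ directly from the definition of $\Vert\cdot\Vert_{\F,\mathbf E}$. Dualizing this intersection gives
\[ B_{X_{\F,\mathbf E}^*}\,=\,\overline{\rm conv}^{\,w^*}\Bigl(\bigcup_{F\in\F}\iota_F^*(B_{\mathbf E^*})\Bigr), \]
the union being already balanced. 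By Milman's theorem, any extreme point $x^*$ lies in the $w^*$-closure of this union: one can find a net $x_\alpha^*\in\iota_{F_\alpha}^*(B_{\mathbf E^*})$ converging $w^*$ to $x^*$, with each $x_\alpha^*$ supported on $F_\alpha\in\F$ and $\Vert\mathbf x_\alpha^*\Vert_{\mathbf E^*}\leqslant 1$. For any $j\in{\rm supp}(x^*)$ we have $\langle x_\alpha^*,e_j\rangle\to\langle x^*,e_j\rangle\neq 0$, so $j\in F_\alpha$ eventually; hence every finite subset of ${\rm supp}(x^*)$ sits inside some $F_\alpha\in\F$ and therefore belongs to $\F$ by hereditarity, giving ${\rm supp}(x^*)\in\overline{\F\,}$. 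Since $\Lambda^*$ is $w^*$-$\,w^*$ continuous and $B_{\mathbf E^*}$ is $w^*$-closed, we also get $\Vert\mathbf x^*\Vert_{\mathbf E^*}\leqslant 1$; combined with $\Vert x^*\Vert=1$ and Corollary~\ref{corsuperbasic} (which now applies because ${\rm supp}(x^*)\in\overline{\F\,}$), this forces $\Vert\mathbf x^*\Vert_{\mathbf E^*}=1$.

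The delicate step is verifying the bipolar identity above, which reduces to the claim $(\iota_F^{-1}(B_{\mathbf E}))^\circ=\iota_F^*(B_{\mathbf E^*})$ in $X_{\F,\mathbf E}^*$; this is a routine Hahn--Banach argument consisting of lifting a functional vanishing on $\ker \iota_F$ back to an element of $B_{\mathbf E^*}$ of the same norm, but it is the one place where care is required. Everything else reduces to straightforward coordinate-convergence and hereditarity manipulations.
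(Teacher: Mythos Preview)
Your proof is correct and follows essentially the same bipolar--Milman strategy as the paper: the sufficiency argument via strict convexity of $\mathbf E^*$ and injectivity of $\Lambda^*$ is identical, and for necessity the paper shows directly that the set $M=\{x^*:\operatorname{supp}(x^*)\in\overline{\F},\ \Vert\mathbf x^*\Vert_{\mathbf E^*}\leqslant 1\}$ is $1$-norming and $w^*$-closed, which is the prepolar version of your polar computation $(\iota_F^{-1}(B_{\mathbf E}))^\circ=\iota_F^*(B_{\mathbf E^*})$. The only cosmetic difference is that the paper handles the $w^*$-closure before invoking Milman (proving $M$ itself is $w^*$-closed via a sequence argument in $\{0,1\}^{\N}$), whereas you handle it after (via the net argument on supports); the content is the same.
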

\begin{proof} Let us define 
\[ M_1:=  \bigl\{ x^*\in X_{\F,\mathbf E}^*;\; {\rm supp}(x^*)\in\overline{\F\,}\quad{\rm and}\quad \Vert \mathbf x^*\Vert_{\mathbf E^*}=1\bigr\},\]
and 
\[ M:=  \bigl\{ x^*\in X_{\F,\mathbf E}^*;\; {\rm supp}(x^*)\in\overline{\F\,}\quad{\rm and}\quad \Vert \mathbf x^*\Vert_{\mathbf E^*}\leqslant 1\bigr\}.\]

\smallskip Note that by Corollary \ref{corsuperbasic}, we may replace $\Vert \mathbf x^*\Vert_{\mathbf E^*}$ by $\Vert x^*\Vert$ in the definition of $M$ and $M_1$. In particular, $M$ and $M_1$ are contained in $B_{X_{\mathcal F,\mathbf E}^*}$. We have to show that ${\rm Ext}(B_{X_{\mathcal F,\mathbf E}^*})=M_1$; and since extreme points of the unit ball have norm $1$, it is enough to show that $M_1\subseteq {\rm Ext}(B_{X_{\mathcal F,\mathbf E}^*})$ and ${\rm Ext}(B_{X_{\mathcal F,\mathbf E}^*})\subset M$.

\smallskip
Let $x^*\in M_1$, and assume that $x^*=\frac{u^*+v^*}2$ where $u^*,v^*\in B_{X_{\mathcal F,\mathbf E}^*}$. Then $\mathbf u^*, \mathbf v^*\in B_{\mathbf E^*}$ because $\Vert \mathbf z^*\Vert_{\mathbf E^*}\leqslant \Vert z^*\Vert$ for every $z^*\in X^*_{\mathcal F,\mathbf E}$, and $\mathbf x^*=\frac{\mathbf u^*+\mathbf v^*}2\cdot$ Since $\Vert \mathbf x^*\Vert_{\mathbf E^*}=1$ and $\mathbf E^*$ is strictly convex, it follows that $\mathbf u^*=\mathbf x^*=\mathbf v^*$ and hence $u^*=x^*=v^*$. This shows that $M_1\subseteq {\rm Ext}(B_{X_{\mathcal F,\mathbf E}^*})$.

\smallskip To show that ${\rm Ext}(B_{X_{\mathcal F,\mathbf E}^*})\subset M$, we first observe that the set $M$ is $1$-norming for $X_{\mathcal F,\mathbf E}$, \mbox{\it i.e.}
\[ \Vert x\Vert =\sup\,\bigl\{ \vert \langle z^*, x\rangle\vert;\; z^*\in M\bigr\}\qquad \hbox{for every $x\in X_{\mathcal F,\mathbf E}$}.\]
Indeed, let $x\in X_{\mathcal F,\mathbf E}$. 
If $F \in \F$ then, by $1$-unconditionality, one can find $z^*_F \in B_{X_{\mathcal F,\mathbf E}^*}$ supported on $F$ such that $\langle z^*_F, x \rangle =\langle z^*_F, P_F(x) \rangle = \Vert P_F(x) \Vert$. Note that $z^*_F \in M$ by Lemma \ref{superbasic}. Hence, 
\[ \Vert x \Vert = \sup \{\Vert P_F(x)\Vert; \; F \in \mathcal F \}= \sup \{ \langle z^*_F, P_F(x) \rangle; \; F \in \mathcal F \} \leqslant  \sup\{ \vert\langle z^*, x \rangle\vert; \; z^* \in M \}, \]
which proves our claim.
Since $M$ is obviously balanced, it follows (by the bipolar Theorem) that
\[ B_{X_{\mathcal F,\mathbf E}^*}=\overline{\rm conv}^{w^*} (M).\]

So, by Milman's converse to the Krein-Milman theorem (see e.g. \cite{Ph}), it is now enough to check that $M$ is $w^*$-$\,$closed in $B_{X_{\mathcal F,\mathbf E}^*}$. 

Let $(x_n^*)$ be a sequence in $M$ such that $x_n^*\xrightarrow{w^*} x^*\in X^*_{\mathcal F, \mathbf E}$ (even if it is not necessary, we can restrict ourselves to sequences because the bounded subsets of $X_{\F,\mathbf E}^*$ are $w^*$-$\,$metrizable by separability of $X_{\F,\mathbf E}$). Then $\Vert x^*\Vert \leqslant 1$ since $\Vert x_n^*\Vert\leqslant 1$ for all $n$ by Corollary~\ref{corsuperbasic}. So we just have to show that $x^*$ is supported on some set $A\in\overline{\F\,}$. By the definition of $M$, we know that $x_n^*$ is supported on some set $A_n\in\overline{\F\,}$ for each $n\in\N$. Upon extracting a subsequence, we may assume that $(A_n)$ converges in $\{ 0,1\}^\N$ to some set $A\in\overline{\F\,}$. Then, if $i\in\N\setminus A$, we have $i\notin A_n$ for all large enough $n$, and hence $\langle x_n^*, e_i\rangle=0$ for all large enough $n$. It follows that $\langle x^*,e_i\rangle=0$ for every $i\in\N\setminus A$, \mbox{\it i.e.} $x^*$ is supported on $A$.
\end{proof}

\begin{rem1} The proof of Lemma \ref{ext} has shown that if $\mathbf E^*$ is strictly convex, then we also have 
\[ {\rm Ext}(B_{X_{\mathcal F,\mathbf E}^*})=\bigl\{ x^*\in {X_{\mathcal F,\mathbf E}^*};\; \Vert x^*\Vert=1=\Vert \mathbf x^*\Vert_{\mathbf E^*}\bigr\}.\]

\noindent
It follows in particular that if $x^*\in X_{\mathcal F,\mathbf E}^*$ is such that $\Vert \mathbf x^*\Vert_{\mathbf E^*}=1$ and $x^*$ is \emph{not} supported on some set $A\in\overline{\F\,}$, then $\Vert x^*\Vert >1$.
\end{rem1}

\begin{rem2} If the family $\F$ is compact, the assumption that $\mathbf E^*$ is strictly convex in Lemma \ref{ext} can be weakened: it is enough to know that $\Vert \,\cdot\,\Vert_{\bf E}$ is smooth at any  finitely supported vector $\mathbf x\in S_{\mathbf E}$.
\end{rem2}
\begin{proof} With the notation of the proof of Lemma \ref{ext}, it is enough to show that $M_1\subseteq{\rm Ext}\bigl( B_{X_{\F,\mathbf E}^*}\bigr)$. Let $x^*\in M_1$ be supported on $F\in\overline{\F\,}=\F$, and choose $x\in S_{X_{\mathcal F, \mathbf E}}$ supported on $F$ such that $\langle x^*, x\rangle= 1$; this can be done since $F$ is finite and ${\rm span}\, (e_i^*,\; i\in F)$ is isometric to the dual space of ${\rm span}\,(e_i,\; i\in F)$ by $1$-unconditionality. Then $\mathbf x\in S_{\mathbf E}$ and $\mathbf x^*\in S_{\mathbf E^*}$ by Lemma \ref{superbasic}, and $\langle \mathbf x^*,\mathbf x\rangle=1$. Now, assume that 
$x^* =\frac{u^*+v^*}2$ with $u^*,v^*\in B_{X_{\mathcal F,\mathbf E}^*}$. Then $\mathbf u^*, \mathbf v^*\in B_{\mathbf E^*}$ and $\mathbf x^*=\frac{\mathbf u^*+\mathbf v^*}2\cdot$ So we have $1=\langle \mathbf x^*, \mathbf x\rangle=\frac{\langle \mathbf u^*, \mathbf x\rangle+\langle \mathbf v^*, \mathbf x\rangle}2$ and $\vert \langle \mathbf u^*, \mathbf x\rangle\vert\,,\, \vert  \langle \mathbf v^*, \mathbf x\rangle\vert\leqslant  1$, hence $ \langle \mathbf u^*, \mathbf x\rangle=1= \langle \mathbf v^*, \mathbf x\rangle$. Since  $\Vert \,\cdot\,\Vert_{\bf E}$ is smooth at $\mathbf x$, it follows that $\mathbf u^*=\mathbf x^*=\mathbf v^*$, so $u^*=x^*=v^*$.
\end{proof}

\begin{rem3} The proof of Lemma \ref{ext} has shown that without any assumption on $\bf E$, the support of every extreme point of $B_{X_{\mathcal F,\mathbf E}^*}$ belongs to $\overline{\F\,}$.
\end{rem3}

\smallskip To conclude this section, we point out another situation where we can describe the extreme points of $B_{X_{\mathcal F,\mathbf E}^*}$. The result will not be needed for the proof of Theorem \ref{nonsense}, but we use it in the proof of Theorem \ref{nonsensebis} below. 

\smallskip
Let us say that the basis $(\mathbf e_i)_{i\in\N}$ of $\mathbf E$ is \emph{strictly $1$-unconditional} if, whenever $(a_i)$, $(b_i)\in c_{00}$ are such that $\vert a_i\vert \leqslant  \vert b_i\vert$ for all $n$ with at least one strict inequality, it follows that $\Vert \sum a_i\mathbf e_i\Vert_{\bf E} <\Vert \sum b_i \mathbf e_i\Vert_{\bf E}$. For example, the canonical basis of $\ell_p$, $1\leqslant  p<\infty$ is strictly $1$-unconditional, but the canonical basis of $c_0$ is not. Also, it is not hard to see that if $\mathbf E$ is strictly convex, then $(\mathbf e_i)$ is strictly $1$-unconditional. Finally, strict $1$-unconditionality of $(\mathbf e_i)$ is equivalent to the following property: 
$\Vert P_I\mathbf x\Vert_{\bf E} < \Vert \mathbf x\Vert_{\bf E}$ for any finitely supported vector $\mathbf x\in \mathbf E$ and every $I\subsetneq {\rm supp}(\mathbf x)$.

\begin{lemma}\label{extbis} Assume that $\F$ is compact and that the basis $(\mathbf e_i)$ is strictly $1$-unconditional. For a functional $x^*\in X_{\mathcal F,\mathbf E}^*$, the following are equivalent.
\begin{enumerate}
\item[\rm (i)] $x^*$ is an extreme point of $B_{X_{\mathcal F,\mathbf E}^*}$.
\item[\rm (ii)] $x^*$ is supported on some set $F\in\F$, and 
$\mathbf x^*$ is an extreme point of the unit ball of ${\rm span}(\mathbf e_i^*, i\in G)$ for every $G\in\F$ containing $F$.
\end{enumerate}
\end{lemma}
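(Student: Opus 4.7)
Here is my plan.

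For the direction $(\mathrm{i})\Rightarrow(\mathrm{ii})$, let $x^*$ be extreme in $B_{X_{\F,\mathbf{E}}^*}$. By the third remark following Lemma~\ref{ext} together with compactness of $\F$, $F:=\mathrm{supp}(x^*)$ is a finite element of $\F$. Fix $G\in\F$ with $G\supseteq F$ and suppose $\mathbf{x}^*=(\mathbf{u}^*+\mathbf{v}^*)/2$ for $\mathbf{u}^*,\mathbf{v}^*$ in the unit ball of $\Span(\mathbf{e}_i^*,i\in G)$. Define $u^*,v^*\in X_{\F,\mathbf{E}}^*$ supported on $G$ by matching coefficients; Lemma~\ref{superbasic} yields $\|u^*\|=\|\mathbf{u}^*\|_{\mathbf{E}^*}\le 1$, likewise for $v^*$, and $x^*=(u^*+v^*)/2$. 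Extremity of $x^*$ forces $u^*=v^*=x^*$, hence $\mathbf{u}^*=\mathbf{v}^*=\mathbf{x}^*$.

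For $(\mathrm{ii})\Rightarrow(\mathrm{i})$, suppose $x^*$ satisfies (ii), take $F=\mathrm{supp}(x^*)$, and write $x^*=(u^*+v^*)/2$ with $u^*,v^*\in B_{X_{\F,\mathbf{E}}^*}$. First I apply (ii) to each $G\in\F$ with $G\supseteq F$: by the argument of the other direction, $P_G\mathbf{u}^*=P_G\mathbf{v}^*=\mathbf{x}^*$, so $\langle u^*,e_i\rangle=\langle x^*,e_i\rangle$ for every $i\in F\cup N_\F(F)$, where $N_\F(F):=\{i\notin F:F\cup\{i\}\in\F\}$. Therefore $w^*:=u^*-x^*$ is supported on the ``residual'' set $A:=\{i\notin F:F\cup\{i\}\notin\F\}$, and the goal reduces to showing $w^*=0$.

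Suppose for contradiction that $\langle w^*,e_{i_0}\rangle=a_0\ne 0$ for some $i_0\in A$. The crux is to produce a norming vector $\mathbf{y}_F\in V:=\Span(\mathbf{e}_i,i\in F)$ for $\mathbf{x}^*$, with $\|\mathbf{y}_F\|_{\mathbf{E}}=1$, $\langle\mathbf{x}^*,\mathbf{y}_F\rangle=1$, and, crucially, $\mathrm{supp}(\mathbf{y}_F)=F$. Such a $\mathbf{y}_F$ exists because the norming face $N(\mathbf{x}^*)\subseteq B_V$ is a nonempty convex set whose union of supports must exhaust $F$: if it were contained in a coordinate hyperplane $\{\mathbf{y}(i_1)=0\}$ for some $i_1\in F$, then $\|P_{F\setminus\{i_1\}}\mathbf{x}^*\|_{\mathbf{E}^*}=1=\|\mathbf{x}^*\|_{\mathbf{E}^*}$, and combining this with the extremity of $\mathbf{x}^*$ and strict $1$-unconditionality of $(\mathbf{e}_i)$ yields a non-trivial convex decomposition of $\mathbf{x}^*$ inside $B_{V^*}$. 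One then averages points of $N(\mathbf{x}^*)$ whose supports cover $F$. Granted $\mathbf{y}_F$, the combinatorial hypothesis $F\cup\{i_0\}\notin\F$ forces $H\cap F\subsetneq F$ for every $H\in\F$ containing $i_0$, so strict $1$-unconditionality gives $\|P_{H\cap F}\mathbf{y}_F\|_{\mathbf{E}}<1$; as the possible values of $H\cap F$ form a finite family of proper subsets of $F$, the supremum is $1-\delta$ for some $\delta>0$.

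To finish, pick $\beta\in\K$ with $|\beta|=\delta$ and $\beta a_0=\delta|a_0|$, and set $y:=\sum_{i\in F}\mathbf{y}_F(i)\,e_i+\beta\,e_{i_0}$. For $H\in\F$ with $i_0\notin H$, $\|P_H\mathbf{y}\|_{\mathbf{E}}=\|P_{H\cap F}\mathbf{y}_F\|_{\mathbf{E}}\le 1$; for $H\in\F$ with $i_0\in H$, the triangle inequality gives $\|P_H\mathbf{y}\|_{\mathbf{E}}\le(1-\delta)+|\beta|=1$. Hence $\|y\|_{X_{\F,\mathbf{E}}}\le 1$, but $\langle u^*,y\rangle=\langle\mathbf{x}^*,\mathbf{y}_F\rangle+\beta a_0=1+\delta|a_0|>1$, contradicting $\|u^*\|\le 1$. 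Thus $w^*=0$, $u^*=v^*=x^*$, and $x^*$ is extreme. The main obstacle I expect is the production of the full-support norming $\mathbf{y}_F$: this is where strict $1$-unconditionality of $(\mathbf{e}_i)$ must be combined delicately with the finite-dimensional face analysis of $B_{V^*}$, while everything else is a direct computation anchored on $F\cup\{i_0\}\notin\F$.
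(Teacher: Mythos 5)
Your direction (i)$\Rightarrow$(ii) is correct and is essentially the paper's own argument (support in $\overline{\F\,}=\F$ by compactness, then lifting a decomposition of $\mathbf x^*$ to one of $x^*$ via Lemma \ref{superbasic}). The direction (ii)$\Rightarrow$(i), however, takes a different route whose central step has a genuine gap: the existence of a norming vector $\mathbf y_F\in{\rm span}(\mathbf e_i,\,i\in F)$ for $\mathbf x^*$ with ${\rm supp}(\mathbf y_F)=F$. Your justification is that if the norming face lay in a coordinate hyperplane $\{\mathbf y(i_1)=0\}$ one would get $\Vert P_{F\setminus\{i_1\}}\mathbf x^*\Vert_{\mathbf E^*}=1=\Vert\mathbf x^*\Vert_{\mathbf E^*}$, and that this contradicts extremity of $\mathbf x^*$ plus strict $1$-unconditionality of $(\mathbf e_i)$. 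It does not: strict $1$-unconditionality of $(\mathbf e_i)$ does not pass to $(\mathbf e_i^*)$ (for $\mathbf E=\ell_1$, the point $\mathbf e_1^*+\mathbf e_2^*$ is extreme in $B_{\ell_\infty(2)}$ while $\Vert P_{\{1\}}(\mathbf e_1^*+\mathbf e_2^*)\Vert_\infty=1$). Worse, the existence statement itself is false. Give $V={\rm span}(\mathbf e_1,\mathbf e_2)$ the norm whose unit ball is $\bigl\{(y_1,y_2):\ \vert y_1\vert+\tfrac12\vert y_2\vert+\tfrac12\vert y_2\vert^2\leqslant 1\bigr\}$; the basis is normalized and strictly $1$-unconditional. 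For $\mathbf x^*=\mathbf e_1^*+\tfrac12\mathbf e_2^*$ one has, on the first-quadrant part of the unit sphere, $\langle \mathbf x^*,y\rangle=1-\tfrac12 y_2^2$, so $\Vert\mathbf x^*\Vert_{V^*}=1$ and the \emph{only} norming vector is $\mathbf e_1$. Yet $\mathbf x^*$ is extreme in $B_{V^*}$: pairing a decomposition with $\mathbf e_1$ forces $u^*=(1,\tfrac12+t)$, $v^*=(1,\tfrac12-t)$, and $\Vert(1,\tfrac12+\vert t\vert)\Vert_{V^*}\geqslant \sup_{u\in[0,1]}\bigl(1+\vert t\vert u-\tfrac12 u^2\bigr)=1+\tfrac12 t^2>1$ for $t\neq 0$. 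So no full-support norming vector exists. Your final estimates genuinely need full support: if ${\rm supp}(\mathbf y_F)=S\subsetneq F$, a set $H\in\F$ with $i_0\in H$ may well contain $S$ (only $F\cup\{i_0\}\notin\F$ is known), and then $\Vert P_{H\cap F}\mathbf y_F\Vert_{\mathbf E}=1$ and the gap $\delta$ disappears.

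The paper's proof of (ii)$\Rightarrow$(i) avoids norming vectors for $\mathbf x^*$ altogether and works on the dual side. From $P_F\mathbf u^*=\mathbf x^*=P_F\mathbf v^*$ (your first step, which is fine) it deduces $\Vert P_A\mathbf u^*\Vert_{\mathbf E^*}\geqslant 1$ for every finite $A\supset F$, and then invokes Fact \ref{strict}: if $A\subseteq{\rm supp}(\mathbf u^*)$ and $A\notin\F$, then $\Vert P_Au^*\Vert>\Vert P_A\mathbf u^*\Vert_{\mathbf E^*}\geqslant 1$, contradicting $\Vert u^*\Vert\leqslant 1$. (Fact \ref{strict} does use a norming vector, but for $P_Au^*$ and \emph{without} any full-support requirement, which is why strict $1$-unconditionality of $(\mathbf e_i)$ suffices there.) Compactness then gives ${\rm supp}(\mathbf u^*)\in\F$, ${\rm supp}(\mathbf u^*)={\rm supp}(\mathbf v^*)$, and a second application of (ii) with $G={\rm supp}(\mathbf u^*)\cup F$ finishes. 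You would need to replace your construction of $\mathbf y_F$ by an argument of this kind; as it stands, the step cannot be repaired.
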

\begin{proof} We need the following fact (that will also be used later).

\begin{fact}\label{strict} Assume that $(\mathbf e_i)$ is strictly $1$-unconditional. If $z^*\in {\rm span}(e_i^*,\; i\in\N)\subseteq X_{\mathcal F,\mathbf E}^*$ and ${\rm supp}(z^*)\notin \mathcal F$, then $\Vert z^*\Vert>\Vert \mathbf z^*\Vert_{\mathbf E^*}$. 
\end{fact}
\begin{proof}[\it Proof of Fact \ref{strict}] By assumption, the support of $z^*$ is a finite set $A\notin\mathcal F$. Since $A$ is finite and ${\rm span}\, (\mathbf e_i^*,\; i\in A)$ is isometric to the dual space of ${\rm span}\, (\mathbf e_i,\; i\in A)$, we may choose $\mathbf z\in {\rm span}\, (\mathbf e_i,\; i\in A)$ such that $\Vert \mathbf z\Vert_{\mathbf E}=1$ and 
$\langle \mathbf z^*,\mathbf z\rangle=\Vert \mathbf z^*\Vert_{\mathbf E^*}$. Denoting by $z$ the associated vector in ${\rm span}\, (e_i,\; i\in A)\subseteq X_{\F,\mathbf E}$, we have $\langle \mathbf z^*,\mathbf z\rangle=\langle  z^*, z\rangle$, and hence $\Vert \mathbf z^*\Vert_{\mathbf E^*}\leqslant  \Vert z^*\Vert \, \Vert z\Vert$. Now, by definition of the norm of $X_{\F,\mathbf E}$, we have $\Vert z\Vert=\Vert P_F(\mathbf z)\Vert_{\mathbf E}$ for some set $F\in\mathcal F$ contained in $A$; and since $A\notin \mathcal F$, the set $F$ is \emph{strictly} contained in $A$. By strict $1$-unconditionality of $(\mathbf e_i)$, it follows that $\Vert z\Vert < \Vert \mathbf z\Vert_{\mathbf E}=1$, so that $\Vert \mathbf z^*\Vert_{\mathbf E^*}< \Vert z^*\Vert$. 
\end{proof}

Assume that (ii) holds true, and let us show that $x^*$ is an extreme point of $B_{X_{\mathcal F,\mathbf E}^*}$. Note first that $\Vert x^*\Vert =\Vert \mathbf x^*\Vert_{\mathbf E^*}=1$ by Lemma \ref{superbasic}. Assume that $x^*=\frac{u^*+v^*}2$ where 
$u^*, v^*\in B_{X_{\F,\mathbf E}^*}$. Then $\mathbf x^*=\frac{\mathbf u^*+\mathbf v^*}2=\frac{ P_F\mathbf u^*+P_F\mathbf v^*}2\cdot$ So $P_F\mathbf u^*=\mathbf x^*=P_F\mathbf v^*$ by (ii). Therefore, $\Vert P_A\mathbf u^*\Vert_{\mathbf E^*}\geqslant   1$ and $\Vert P_A\mathbf v^*\Vert_{\mathbf E^*}\geqslant  1$ for any finite set $A\supset F$ by $1$-unconditionality; so any finite set $A\supset F$ contained in ${\rm supp}(\mathbf u^*)$ or in ${\rm supp}(\mathbf v^*)$ must belong to $\F$, by Fact \ref{strict}. Since the family $\F$ is compact, it follows that the supports of $\mathbf u^*$ and $\mathbf v^*$ belong to $\F$. Moreover, since $P_F\mathbf u^*=P_F\mathbf v^*$ and obviously ${\rm supp}(\mathbf u^*)\setminus F={\rm supp}(\mathbf v^*)\setminus F$, we see that $\mathbf u^*$ and $\mathbf v^*$ must have the same support. So there is a set $G\in\F$ containing $F$ such that $\mathbf u^*$ and $\mathbf v^*$ are supported on $G$. By (ii) again, it follows that $\mathbf u^*=\mathbf x^*=\mathbf v^*$, and hence $u^*=x^*=v^*$.

\smallskip Conversely, assume that $x^*\in {\rm Ext}(B_{X_{\mathcal F,\mathbf E}^*})$. Then, by the proof of Lemma \ref{ext}, $x^*$ must be supported on some set $F\in\F$. And since $\Vert \mathbf u^*\Vert_{\mathbf E^*}= \Vert u^*\Vert$ for every $u^*\in X_{\F,\mathbf E}^*$ with support in $\F$, it is clear that (ii) is satisfied.
\end{proof}

\subsection{Proof of Theorem \ref{nonsense}}  Let $J:X_{\F_1,\mathbf E}^*\to X_{\F_2,\mathbf E}^*$ be an isometry. We want to show that there exist a permutation $\pi:\N\to\N$ with $\pi(\F_1)=\F_2$ and a sequence of signs $(\omega_i)$ such that $Je_i^*=\omega_ie_{\pi(i)}^*$ for all $i\in\N$, without further assumption on $J$ if the families $\F_1$, $\F_2$ are compact, and assuming that $J$ is $w^*$-$\,w^*$ continuous or such that $J(Z_{\F_1,\mathbf E})=Z_{\F_2,\mathbf E}$ otherwise. We treat the real case and the complex case separately since the assumptions on $\mathbf E$ are different.

\medskip\noindent
{\bf Case 1.}  $\K=\R$ and $\mathbf E$ satisfies (H).

\begin{fact}\label{presupp} If $j,k\in\N$ and $\{ j,k\}\in\F_2$, then $\mathrm{supp} (J^{-1}e_j^*) \cup \mathrm{supp} (J^{-1}e_k^*)\in\overline{\F_1}$.
\end{fact}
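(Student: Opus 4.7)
The plan is to apply the extreme-point description of Lemma~\ref{ext} to \emph{generic} unit vectors in $\mathrm{span}(e_j^*,e_k^*)\subseteq X_{\F_2,\mathbf E}^*$, not just to $e_j^*$ and $e_k^*$ themselves; transporting back by $J^{-1}$ and choosing coefficients generically will then produce the desired union as a support. Write $u^*:=J^{-1}e_j^*$, $v^*:=J^{-1}e_k^*$, $B:=\mathrm{supp}(u^*)$, $C:=\mathrm{supp}(v^*)$. I may assume $j\neq k$ (otherwise $B\cup C=\mathrm{supp}(u^*)\in\overline{\F_1}$ immediately, because $e_j^*$ is an extreme point of $B_{X_{\F_2,\mathbf E}^*}$ by Lemma~\ref{ext} and $J^{-1}$ preserves extreme points).

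The key step is to combine Lemma~\ref{superbasic} (using $\{j,k\}\in\F_2$) with strict convexity of $\mathbf E^*$. Lemma~\ref{superbasic} gives
\[\|ae_j^*+be_k^*\|_{X_{\F_2,\mathbf E}^*}=\|a\mathbf e_j^*+b\mathbf e_k^*\|_{\mathbf E^*}\qquad\hbox{for all scalars $a,b$},\]
so the strict convexity inherited by $\mathrm{span}(\mathbf e_j^*,\mathbf e_k^*)$ forces the following: for \emph{every} $(a,b)$ with $\|a\mathbf e_j^*+b\mathbf e_k^*\|_{\mathbf E^*}=1$, Lemma~\ref{ext} makes $ae_j^*+be_k^*$ an extreme point of $B_{X_{\F_2,\mathbf E}^*}$ (its support sits in $\{j,k\}\in\F_2\subseteq\overline{\F_2}$ and its $\mathbf E^*$-norm equals $1$). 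Since $J$ is an isometry, $J^{-1}$ preserves extreme points, so $au^*+bv^*$ is an extreme point of $B_{X_{\F_1,\mathbf E}^*}$, and Lemma~\ref{ext} then yields $\mathrm{supp}(au^*+bv^*)\in\overline{\F_1}$.

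I then want to choose $(a,b)$ so that $\mathrm{supp}(au^*+bv^*)=B\cup C$ exactly. The set of ``bad'' scalars is the union of the coordinate axes $\{a=0\}\cup\{b=0\}$ together with the proper linear subspaces $L_i:=\{(a,b)\in\K^2:au^*(i)+bv^*(i)=0\}$ for $i\in B\cap C$; this is countably many proper subspaces of $\K^2$. The unit sphere $\{(a,b):\|a\mathbf e_j^*+b\mathbf e_k^*\|_{\mathbf E^*}=1\}$ has real dimension $1$ in the real case and $3$ in the complex case, so a Baire-category (or measure) argument produces $(a,b)$ on the sphere outside every bad subspace. For this choice $\mathrm{supp}(au^*+bv^*)=B\cup C$, and hence $B\cup C\in\overline{\F_1}$ as required.

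The only point requiring a touch of care is the genericity argument in the complex case, where each bad subspace meets the sphere in a circle rather than a pair of points; Baire category handles this cleanly. Note that hypothesis~(H) plays no role in this fact and will only enter in the subsequent steps of Case~1.
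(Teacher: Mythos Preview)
Your proof is correct and follows essentially the same route as the paper: use Lemma~\ref{ext} (via $\{j,k\}\in\F_2$) to see that every normalized $ae_j^*+be_k^*$ is extreme, push back by $J^{-1}$, and make a generic choice of $(a,b)$ to avoid cancellations on $B\cap C$. The only cosmetic difference is that the paper invokes homogeneity to conclude $\mathrm{supp}\bigl(J^{-1}(\alpha e_j^*+\beta e_k^*)\bigr)\in\overline{\F_1}$ for \emph{all} $(\alpha,\beta)\in\K^2$ and then applies Baire category directly in $\K^2$, whereas you run the genericity argument on the unit sphere; both are equally easy.
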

\begin{proof}[Proof of Fact \ref{presupp}] Let $A:={\rm supp}(J^{-1}e_j^*)$ and $B:={\rm supp}(J^{-1}e_k^*)$, so that  
\[J^{-1} e_j^* = \sum_{i \in A} \lambda _i e_i^* \qquad{\rm and}\qquad J^{-1}e_k^* = \sum_{i\in B}\mu_ie_i^*\]
with $\lambda_i \neq 0$ for all $i \in A$ and $\mu_i \neq 0 $ for all $i \in B$. (If $A$ or $B$ is infinite, the corresponding series is $w^*$-$\,$convergent.) We have to show that $A\cup B\in\overline{\F_1}$.

For every $\alpha, \beta \in \K$, we have
\begin{equation}\label{truc} J^{-1}(\alpha e_j^*+ \beta e_k^*) = \sum_{i \in A \minus B} \alpha \lambda_i e_i^* + \sum_{i \in A \cap B}(\alpha \lambda_i + \beta \mu_i)e_i^* + \sum_{i \in B \minus A}\beta \mu_i e_i^*. \end{equation}
Moreover, if $\Vert \alpha e_j^*+\beta e_k^*\Vert=1$, then $\alpha e_j^*+ \beta e_k^* \in \text{Ext}(B_{X^*_{\F_2,\mathbf E}})$ by Lemma \ref{ext}, because $\{j,k\} \in \F_2$. So $ J^{-1}(\alpha e_j^*+ \beta e_k^*) \in \text{Ext}(B_{X^*_{\F_1,\mathbf E}})$, and hence $\text{supp}\bigl( J^{-1}(\alpha e_j^*+ \beta e_k^*)\bigr) \in \overline{\F_1}$. By homogeneity, it follows that $\text{supp}\bigl( J^{-1}(\alpha e_j^*+ \beta e_k^*)\bigr) \in \overline{\F_1}$ for every $(\alpha,\beta)\in\K^2$. Now, since $\K^2$ cannot be covered by countably many hyperplanes (by the Baire category Theorem), we may  choose $\alpha_0, \beta_0$ both $\neq 0$ such that $\alpha_0 \lambda_i + \beta_0 \mu_i \neq 0$ for all $i \in A \cap B$. Then $\text{supp}\bigl( J^{-1}(\alpha_0 e_j^*+ \beta_0 e_k^*)\bigr)= A \cup B$ by (\ref{truc}), and hence $A \cup B \in \overline{\F_1}$.
\end{proof}

\begin{remark} Fact \ref{presupp} is the only place in the proof where we use the strict convexity of $\mathbf E^*$ (which is needed in Lemma \ref{ext}). If the families $\F_1$ and $\F_2$ are compact, it is enough to assume that $\Vert\,\cdot\,\Vert_{\bf E}$ is smooth at every finitely supported vector $\mathbf x\in S_{\bf E}$, by the second remark following the proof of Lemma \ref{ext}.

\end{remark}

\begin{fact}\label{supp} If $j\neq k$ and $\{j,k\} \in \F_2$, then $\mathrm{supp} (J^{-1}e_j^*) \cap \mathrm{supp} (J^{-1}e_k^*) = \emptyset$.
  \end{fact}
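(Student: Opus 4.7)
The plan is to combine Fact~\ref{presupp} with hypothesis~(H). Setting $u^* := J^{-1}e_j^*$ and $v^* := J^{-1}e_k^*$, and writing $A = \mathrm{supp}(u^*)$, $B = \mathrm{supp}(v^*)$, Fact~\ref{presupp} already provides $A \cup B \in \overline{\F_1}$; the goal becomes to show $A \cap B = \emptyset$. The natural idea is to manufacture a two-dimensional isometric copy inside $\mathbf E^*$ to which~(H) applies directly.

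Concretely, I would define $T : \mathrm{span}(\mathbf e_j^*, \mathbf e_k^*) \to \mathbf E^*$ by $T \mathbf e_j^* := \mathbf u^*$ and $T \mathbf e_k^* := \mathbf v^*$, where $\mathbf u^* = \Lambda^* u^*$ and $\mathbf v^* = \Lambda^* v^*$ are the canonically associated functionals in $\mathbf E^*$. To see that $T$ is a linear isometric embedding, the key is to chain three norm equalities. For any scalars $\alpha, \beta$, one has
\[
\Vert \alpha \mathbf u^* + \beta \mathbf v^* \Vert_{\mathbf E^*} = \Vert \alpha u^* + \beta v^* \Vert_{X_{\F_1,\mathbf E}^*}
\]
by Corollary~\ref{corsuperbasic}, since $\alpha u^* + \beta v^*$ is supported on $A \cup B \in \overline{\F_1}$; this equals $\Vert \alpha e_j^* + \beta e_k^* \Vert_{X_{\F_2,\mathbf E}^*}$ because $J$ is an isometry; and that in turn equals $\Vert \alpha \mathbf e_j^* + \beta \mathbf e_k^* \Vert_{\mathbf E^*}$ by Lemma~\ref{superbasic}, since $\{j,k\} \in \F_2$.

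With $T$ isometric, hypothesis~(H) forces $\mathbf u^*$ and $\mathbf v^*$ to have disjoint supports in $\mathbf E^*$. But the support of $\mathbf x^* = \sum_i \langle x^*, e_i\rangle \mathbf e_i^*$ in $\mathbf E^*$ is, by inspection of the defining formula, identical to the support of $x^*$ in $X_{\F_1,\mathbf E}^*$; so disjointness of the $\mathbf E^*$-supports translates immediately into $A \cap B = \emptyset$, as desired. The genuinely non-routine work has already been absorbed into Fact~\ref{presupp} (which upgraded a pointwise extremality statement to a combinatorial condition on $A \cup B$ via a Baire-category argument); the present step merely transports the two-dimensional isometry from $X_{\F_1,\mathbf E}^*$ into $\mathbf E^*$ so that~(H) can be invoked.
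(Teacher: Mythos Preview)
Your proof is correct and follows essentially the same approach as the paper's own proof: invoke Fact~\ref{presupp} to get $A\cup B\in\overline{\F_1}$, use Corollary~\ref{corsuperbasic} and Lemma~\ref{superbasic} to transport the two-dimensional isometry into $\mathbf E^*$, and apply~(H). The only cosmetic difference is that the paper writes out $J^{-1}e_j^*$ and $J^{-1}e_k^*$ explicitly in coordinates rather than using the $\Lambda^*$ notation, but the argument is identical.
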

\begin{proof}[Proof of Fact \ref{supp}] Let again $A:={\rm supp}(J^{-1}e_j^*)$ and $B:={\rm supp}(J^{-1}e_k^*)$, so 
\[J^{-1} e_j^* = \sum_{i \in A} \lambda _i e_i^* \qquad{\rm and}\qquad J^{-1}e_k^* = \sum_{i\in B}\mu_ie_i^*\]
with $\lambda_i \neq 0$ for all $i \in A$ and $\mu_i \neq 0 $ for all $i \in B$. We have to show that $A\cap B=\emptyset$.

By Fact \ref{presupp}, we know that $A \cup B \in \overline{\F_1}$. 
Now, recall that if $\F$ is a combinatorial family, $I\in\overline{\F\,}$ and $x^*=\sum_{i\in I} x_i e_i^*$ is a vector of $X^*_{\F,\mathbf E}$ supported on $I$ (where the series is $w^*$-$\,$convergent), then the series $\sum_{i\in I} x_i\mathbf e_i^*$ is $w^*$-$\,$convergent in $\mathbf E^*$ and, by Corollary~\ref{corsuperbasic},  
$\Vert x^*\Vert=\Vert \sum_{i\in I} x_i \mathbf e^*_i\Vert_{\mathbf E^*}$. Hence, if we define the operator $T: {\rm span}(\mathbf e_j^*, \mathbf e_k^*)\to \overline{{\rm span}(\mathbf e_i^*, i\in A\cup B)}^{\,w^*}\subseteq\mathbf E^*$ by 
\[ T\mathbf e_j^*:=\sum_{i \in A} \lambda _i \mathbf e_i^*\qquad{\rm and} \qquad T\mathbf e_k^*:=\sum_{i \in B} \mu_i \mathbf e_i^*,\]
then $T$ is an isometric embedding for the norm $\Vert\,\cdot\,\Vert_{\mathbf E^*}$. By assumption (H), it follows that ${\rm supp}(T\mathbf e_j^*)\cap{\rm supp}(T \mathbf e_k^*)=\emptyset$, \textit{i.e.} $A\cap B=\emptyset$ as required.
\end{proof}

\begin{fact}\label{singleton} For any $n \in \N$, the support of $Je_n^*$ is a singleton. 
\end{fact}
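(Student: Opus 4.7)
The plan is to fix $n\in\N$, set $C:=\mathrm{supp}(Je_n^*)$, and show $|C|=1$ by transporting Fact~\ref{supp} back to $e_n^*$ through the Schauder basis expansion of $Je_n^*$.

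First I would observe that $e_n^*$ is an extreme point of $B_{X^*_{\F_1,\mathbf E}}$: indeed $\{n\}\in\F_1\subseteq\overline{\F_1\,}$, and $\Vert\mathbf e_n^*\Vert_{\mathbf E^*}=1$ by $1$-unconditionality and normalization of $(\mathbf e_i)$, so Lemma~\ref{ext} applies. Since isometries preserve extreme points, $Je_n^*$ is then extreme in $B_{X^*_{\F_2,\mathbf E}}$, and a second application of Lemma~\ref{ext} yields $C\in\overline{\F_2\,}$. Consequently every pair $\{j,k\}\subseteq C$ with $j\neq k$ belongs to $\F_2$, and Fact~\ref{supp} tells us that the supports $A_i:=\mathrm{supp}(J^{-1}e_i^*)$, $i\in C$, are pairwise disjoint (and nonempty since $J^{-1}e_i^*\neq 0$).

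Next I would write the Schauder basis expansion
\[ Je_n^* \,=\, \sum_{i\in C} c_i\, e_i^*,\qquad c_i:=\langle Je_n^*,e_i\rangle,\]
which is $w^*$-convergent for any functional in $X^*_{\F_2,\mathbf E}$ because $(e_i)$ is a Schauder basis of $X_{\F_2,\mathbf E}$, and then apply $J^{-1}$ term by term to obtain
\[ e_n^* \,=\, \sum_{i\in C} c_i\, J^{-1}e_i^*.\]
Justifying this transfer is the main subtlety, and it is precisely where the two alternative hypotheses on $J$ come into play. If $J$ is $w^*$-$\,w^*$ continuous, then its restriction to the unit ball is a continuous bijection between $w^*$-compact metrizable sets, hence a homeomorphism, so $J^{-1}$ is $w^*$-$\,w^*$ continuous on bounded sets and the $w^*$-convergence passes. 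If instead $J(Z_{\F_1,\mathbf E})=Z_{\F_2,\mathbf E}$, then $Je_n^*\in Z_{\F_2,\mathbf E}$; since $(e_i^*)$ is a Schauder basis of $Z_{\F_2,\mathbf E}$ the expansion is in fact norm-convergent, so the mere boundedness of $J^{-1}$ suffices.

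To conclude I would read off coordinates: evaluating the identity at $e_m$ gives $\delta_{nm}=\sum_{i\in C}c_i\,\langle J^{-1}e_i^*,e_m\rangle$. Because the $A_i$ are pairwise disjoint, for each $m$ there is at most one $i\in C$ contributing a nonzero term; taking $m\neq n$ and using $c_i\neq 0$ then forces $m\notin A_i$ for every $i\in C$. Hence $A_i\subseteq\{n\}$ for all $i\in C$, and since the $A_i$ are nonempty and pairwise disjoint this is compatible only with $|C|\leqslant 1$. As $Je_n^*\neq 0$ forces $|C|\geqslant 1$, we conclude $|C|=1$, as required.
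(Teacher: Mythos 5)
Your proof is correct and follows essentially the same route as the paper's: establish that $\mathrm{supp}(Je_n^*)\in\overline{\F_2}$ (the paper does this by applying Fact~\ref{presupp} to $J^{-1}$ with $j=k=n$, which is the same extreme-point argument you spell out via Lemma~\ref{ext}), invoke Fact~\ref{supp} to get pairwise disjoint supports for the $J^{-1}e_i^*$, push the basis expansion of $Je_n^*$ through $J^{-1}$ using either the $w^*$-continuity or the $Z$-invariance hypothesis, and read off that the support must be a singleton. Your justification that $J^{-1}$ is $w^*$-$\,w^*$ continuous on bounded sets (via compact metrizable balls) is a correct filling-in of a step the paper leaves implicit.
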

\begin{proof} 
The argument is exactly the same as in \cite{BFT}.  Let $A:={\rm supp}(Je_n^*)$, and write
\[ Je_n^* = \sum_{i \in A} \lambda_i e_i^*,\]
where  $\lambda_i \neq 0$ for all $i \in A$ and the series is $w^*$-$\,$convergent if $A$ is infinite. 

\smallskip
By  Fact \ref{presupp} applied to $J^{-1}$ and $j=n=k$, we know that $A \in \overline{\F_2}$. Hence, Fact~\ref{supp} and the hereditarity property imply that for any $j,k \in A $ such that $j \neq k$, we have $\text{supp} (J^{-1}e_j^*) \cap \text{supp} (J^{-1}e_k^*) = \emptyset$. 

\smallskip Assume that $J$ is $w^*$-$\, w^*$ continuous. Then so is $J^{-1}$. Hence, we may write 
\[ e_n^*=J^{-1}(Je_n^*)=\sum_{i\in A} \lambda_i J^{-1}e_i^*;\]
and since $\lambda_i\neq 0$ for all $i\in A$ and the $J^{-1}e_i^*$ have disjoint supports, it follows that $A$ must be a singleton.

\smallskip Now assume that $J(Z_{\F_1,\mathbf E})=Z_{\F_2,\mathbf E}$. Then the series $\sum_{i\in A} \lambda_i e_i^*$ is convergent in norm (if $A$ is infinite) since $Je_n^*\in Z_{\F_2,\mathbf E}$ and  $(e_i^*)$ is a Schauder basis of $Z_{\F_2,\mathbf E}$; so one gets as above that $A$ is a singleton.

\smallskip Finally, assume that $\F_1$ and $\F_2$ are compact. Then $(e_i)$ is a shrinking basis of $X_{\F_1,\mathbf E}$ and $X_{\F_2,\mathbf E}$ by Fact \ref{domi}, so $J(Z_{\F_1,\mathbf E})=Z_{\F_2,\mathbf E}$. Moreover, with the notation of the proof of Fact \ref{supp}, the set $A\cup B$ is finite; so it is enough to assume that (H) holds true for isometric embeddings $T: {\rm span}(\mathbf e_j^*, \mathbf e_k^*)\to {\rm span}(\mathbf e_i^*, i\in \N)$.
\end{proof}

It is now easy to conclude the proof. By Fact \ref{singleton} and since $\Vert Je^*_n\Vert =\Vert e^*_n\Vert =1$ for all $n\in\N$, there is a sequence of signs $(\omega_n)$ and a map $\pi:\N\to\N$ such that $Je^*_n=\omega_n e^*_{\pi(n)}$ for all $n\in\N$. Since the same holds true for $J^{-1}$, the map $\pi$ has to be a bijection. Finally, since $J$ and $J^{-1}$ preserve extreme points, we have $\pi(\F_1)=\F_2$ by Lemma \ref{ext}.

\medskip\noindent
{\bf Case 2.} $\K=\mathbb C$ and ${\rm span}(\mathbf e_j^*,\mathbf e_k^*)$ is not isometrically equal to $\ell_2(2)$ for any $j\neq k$.

\medskip In this case, we use as a blackbox the theory developed in \cite{KW}. More precisely, what we need from \cite{KW} can be summarized as follows.

\smallskip
- If $Z$ is a complex Banach space endowed  with a normalized $1$-unconditional basis $(f_i)_{i\in\N}$, then there is a partition $(I_\lambda)_{\lambda\in\Lambda}$ of $\N$ such that the spaces $H_\lambda:=\overline{\rm span}\,(f_i;\; i\in I_\lambda)$ are the \emph{maximal hermitian subspaces} of $Z$ (one does not need to know the meaning of ``hermitian subspace''). These spaces $H_\lambda$ are isometrically Hilbertian and  are called the \emph{Hilbert components} of $Z$. Moreover, if $f_j$ and $f_k$ are in the same Hilbert component, then ${\rm span}(f_j,f_k)$ is isometrically equal to 
$\ell_2(2)$.

\smallskip
- If $Z$ and $Z'$ are two such spaces with Hilbert components $(H_\lambda)_{\lambda\in\Lambda}$ and $(H_{\lambda'})_{\lambda'\in\Lambda'}$ and if $J:Z\to Z'$ is an isometry, then there is a bijection $\pi:\Lambda\to\Lambda'$ such that $J H_\lambda=H_{\pi(\lambda)}$ for every $\lambda\in\Lambda$.

\medskip
In our case, the Hibert components of $Z:= Z_{\F_1,\mathbf E}$ and $Z':= Z_{\F_2,\mathbf E}$ are $1$-dimensional. Indeed, assume for example that some Hilbert component of $Z_{\F_1,\mathbf E}$ is not $1$-dimensional. Then, one can find $j\neq k$ such that ${\rm span}(e_j^*,e_k^*)$ is isometrically equal to $\ell_2(2)$. 
 However, 
if $\{ j,k\}\in\mathcal F_1$ then ${\rm span}(e_j^*,e_k^*)$ is canonically isometric to ${\rm span}(\mathbf e_j^*, \mathbf e_k^*)$ by Lemma \ref{superbasic}, so ${\rm span}(e_j^*,e_k^*)$ is not isometrically equal to $\ell_2(2)$ 
by assumption; and if $\{ j,k\}\notin\mathcal F_1$, then ${\rm span}(e_j^*,e_k^*)$ is not even Hilbertian since it is isometric to the dual space of ${\rm span}(e_j,e_k)$ and the latter is isometrically equal to $\ell_\infty(2)$. 

So the Hilbert components of $Z_{\F_1,\mathbf E}$ and $Z_{\F_2,\mathbf E}$ are just the linear spans of the vectors $e_n^*$. By \cite{KW}, it follows that if the isometry $J: X_{\F_1,\mathbf E}^*\to X_{\F_2,\mathbf E}^*$ is such that $J(Z_{\F_1,\mathbf E})=Z_{\F_2,\mathbf E}$, then there is a permutation $\pi:\N\to \N$ and signs $\omega_n$ such that $Je_n^*=\omega_n e_{\pi(n)}^*$ for all $n\in\N$; and the permutation $\pi$ must be such that $\pi(\mathcal F_1)=\mathcal F_2$ by Lemma \ref{ext}.

\smallskip Since ${\rm span}(\mathbf e_j,\mathbf e_k)$ is not isometrically equal to $\ell_2(2)$ for any $j\neq k$, one shows in the same way that any isometry between  $X_{\F_1,\mathbf E}$ and $X_{\F_2,\mathbf E}$ is a signed permutation of $(e_i)$; from which it follows that if the isometry $J: X_{\F_1,\mathbf E}^*\to X_{\F_2,\mathbf E}^*$ is $w^*$-$\,w^*$ continuous then it has the required form.

\smallskip Finally, we observed above that if $\F_1$ and $\F_2$ are compact then $J(Z_{\F_1,\mathbf E})=Z_{\F_2,\mathbf E}$. 

\subsection{Proof of Proposition \ref{l1}} In this section, we use \cite{KW} to prove a general result that immediately implies Proposition \ref{l1}. Recall that the basis $(\mathbf e_i)_{i\in\N}$ of $\mathbf E$ is said to be {strictly $1$-unconditional} if, whenever $(a_i)$, $(b_i)\in c_{00}$ are such that $\vert a_i\vert \leqslant  \vert b_i\vert$ for all $n$ with at least one strict inequality, it follows that $\Vert \sum a_i\mathbf e_i\Vert_{\bf E} <\Vert \sum b_i \mathbf e_i\Vert_{\bf E}$. Recall also the notation $Z_{\F,\mathbf E}= \overline{{\rm span}(e_i^*,\; i\in\N)}^{X_{\mathcal F,\mathbf E}^*}$. Finally, if $\F_1$ and $\F_2$ are two combinatorial families, we say that a permutation $\pi:\N\to\N$ is \emph{$(\F_1,\F_2)\,$-$\,$compatible} if $\pi(\F_1)=\F_2$.

\begin{proposition}\label{trucmachin} Let $\mathcal F_1$ and $\mathcal F_2$ be two combinatorial families.
\begin{enumerate}
\item[\rm (1)] If the basis $(\mathbf e_i)$ is strictly $1$-unconditional, then any permutation $\pi:\N\to \N$ inducing an isometry $J:X_{\mathcal F_1,\mathbf E}\to X_{\mathcal F_2, \mathbf E}$ must be $(\F_1,\F_2)\,$-$\,$compatible; and if $(\mathbf e_i)$ is both strictly $1$-$\,$unconditional and $1$-$\,$symmetric, then any permutation inducing an isometry $J:Z_{\F_1,\mathbf E}\to Z_{\F_2,\mathbf E}$ is $(\F_1,\F_2)\,$-$\,$compatible.
\item[\rm (2)] Assume that  $\K=\mathbb C$. If ${\rm span}(\mathbf e_j,\mathbf e_k)$ is not isometrically equal to $\ell_2(2)$ for any $j\neq k$, then any isometry $J:X_{\mathcal F_1,\mathbf E}\to X_{\mathcal F_2, \mathbf E}$ sign-permutes $(e_i)$, and any isometry 
$J:Z_{\F_1,\mathbf E}\to Z_{\F_2,\mathbf E}$ sign-permutes $(e_i^*)$. 
\end{enumerate}
\end{proposition}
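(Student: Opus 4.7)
My plan is to prove (1) by contradiction using strict $1$-unconditionality in combination with Lemma \ref{superbasic} and Fact \ref{strict}, and to prove (2) by invoking the Kalton--Wood Hilbert component machinery exactly as in the complex case of Theorem \ref{nonsense}.

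For the first statement of (1), I would suppose toward contradiction that some $F\in\F_1$ satisfies $\pi(F)\notin\F_2$, and test the isometry on $x:=\sum_{i\in F}e_i$. By Lemma \ref{superbasic}, $\|x\|_{\F_1,\mathbf E}=\bigl\|\sum_{i\in F}\mathbf e_i\bigr\|_{\mathbf E}$. Since $\pi(F)\notin\F_2$ and $\F_2$ is hereditary, every $G\in\F_2$ meets $\pi(F)$ in a proper subset, so the norm of $Jx=\sum_{j\in\pi(F)}e_j$ is attained at some $\pi(F^{*})$ with $F^{*}\subsetneq F$, and the isometry hypothesis gives $\bigl\|\sum_{i\in F}\mathbf e_i\bigr\|_{\mathbf E}=\bigl\|\sum_{j\in\pi(F^{*})}\mathbf e_j\bigr\|_{\mathbf E}$. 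The second step is to feed $y:=\sum_{j\in\pi(F^{*})}e_j\in X_{\F_2,\mathbf E}$, whose norm now equals $\bigl\|\sum_{j\in\pi(F^{*})}\mathbf e_j\bigr\|_{\mathbf E}$ because $\pi(F^{*})\in\F_2$, through $J^{-1}$: the output $\sum_{i\in F^{*}}e_i\in X_{\F_1,\mathbf E}$ has norm $\bigl\|\sum_{i\in H}\mathbf e_i\bigr\|_{\mathbf E}$ for some $H\subseteq F^{*}$ ($H=F^{*}$ if $F^{*}\in\F_1$, otherwise $H\subsetneq F^{*}$). Chaining the equalities yields $\bigl\|\sum_{i\in F}\mathbf e_i\bigr\|_{\mathbf E}=\bigl\|\sum_{i\in H}\mathbf e_i\bigr\|_{\mathbf E}$ with $H\subsetneq F$, contradicting strict $1$-unconditionality; the reverse inclusion $\pi^{-1}(\F_2)\subseteq\F_1$ then follows by applying the same scheme to $J^{-1}$.

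For the second statement of (1), I would dualize via Fact \ref{strict}. Set $x^{*}:=\sum_{i\in F}e_i^{*}\in Z_{\F_1,\mathbf E}$, so that Lemma \ref{superbasic} gives $\|x^{*}\|=\bigl\|\sum_{i\in F}\mathbf e_i^{*}\bigr\|_{\mathbf E^{*}}$. Assuming $\pi(F)\notin\F_2$, Fact \ref{strict} applied to $Jx^{*}=\sum_{j\in\pi(F)}e_j^{*}$ (whose support lies outside $\F_2$) yields $\|Jx^{*}\|>\bigl\|\sum_{j\in\pi(F)}\mathbf e_j^{*}\bigr\|_{\mathbf E^{*}}$. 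Here the role of $1$-symmetry becomes transparent: it transfers from $(\mathbf e_i)$ to $(\mathbf e_i^{*})$ via adjoint permutations, and hence $\bigl\|\sum_{j\in\pi(F)}\mathbf e_j^{*}\bigr\|_{\mathbf E^{*}}=\bigl\|\sum_{i\in F}\mathbf e_i^{*}\bigr\|_{\mathbf E^{*}}=\|x^{*}\|$, flatly contradicting the isometry $\|Jx^{*}\|=\|x^{*}\|$.

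For (2), my plan is to mimic the complex-case argument of Theorem \ref{nonsense}: verify that the Hilbert components of $X_{\F_i,\mathbf E}$ and $Z_{\F_i,\mathbf E}$ are all one-dimensional, so that Kalton--Wood forces isometries to sign-permute the bases. For $\mathrm{span}(e_j,e_k)\subset X_{\F,\mathbf E}$, I split on whether $\{j,k\}\in\F$: if yes, Lemma \ref{superbasic} identifies the subspace isometrically with $\mathrm{span}(\mathbf e_j,\mathbf e_k)$, which is not $\ell_2(2)$ by hypothesis; if no, the definition of the norm directly gives $\|ae_j+be_k\|=\max(|a|,|b|)$, i.e.\ $\ell_\infty(2)$. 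For $\mathrm{span}(e_j^{*},e_k^{*})\subset Z_{\F,\mathbf E}$, the norm-one projection $P_{\{j,k\}}$ (from $1$-unconditionality) makes the subspace isometric to the dual of $\mathrm{span}(e_j,e_k)$; self-duality of $\ell_2$ then excludes $\ell_2(2)$ in both sub-cases (the dual of an $\ell_2(2)$ would be $\ell_2(2)$, contradicting the hypothesis, and the dual of $\ell_\infty(2)$ is $\ell_1(2)$). Kalton--Wood then delivers the bijection of one-dimensional Hilbert components, i.e.\ $Je_i=\omega_ie_{\pi(i)}$ for some permutation $\pi$ and unimodular $\omega_i$, and analogously on $Z_{\F_i,\mathbf E}$.

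The main obstacle I anticipate is the iteration in the first statement of (1): one must explicitly handle the case where $F^{*}$ itself fails to lie in $\F_1$, which is resolved by the bifurcation $H=F^{*}$ vs.\ $H\subsetneq F^{*}$ at the last step. Everything else follows cleanly from the tools already established: Lemma \ref{superbasic}, Fact \ref{strict}, and the Kalton--Wood black box spelled out in the proof of Theorem \ref{nonsense}.
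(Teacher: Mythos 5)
Your proposal is correct and follows essentially the same route as the paper: the two-step back-and-forth through $J$ and $J^{-1}$ using Lemma \ref{superbasic} and norm-attainment (your $\pi(F^{*})$ and $H$ are the paper's $F'$ and $F''$, with the argument merely recast as a contradiction), Fact \ref{strict} plus $1$-symmetry of $(\mathbf e_i^*)$ for the dual statement, and the Kalton--Wood Hilbert-component argument for (2). No gaps.
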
 
\begin{proof}  (1) Assume that $(\mathbf e_i)$ is strictly $1$-unconditional, and let  $\pi:\N\to \N$  be a permutation inducing an isometry $J:X_{\mathcal F_1,\mathbf E}\to X_{\mathcal F_2, \mathbf E}$. It is enough to show that $\pi(\mathcal F_1)\subseteq \mathcal F_2$ (since we can then apply the result to $\pi^{-1}$); so we fix $F\in\mathcal F_1$ and we show that $\pi(F)\in\mathcal F_2$. 

By assumption, for any $a=(a_i)\in c_{00}$, we have 
\[ \left\Vert \sum_{i\in F} a_{\pi(i)} \mathbf e_i\right\Vert_{\mathbf E} =  \left\Vert \sum_{i\in F} a_{\pi(i)}  e_i\right\Vert_{X_{\mathcal F_1,\mathbf E}}=\left\Vert \sum_{i\in F} a_{\pi(i)} e_{\pi(i)}\right\Vert_{X_{\mathcal F_2,\mathbf E}}= \left\Vert \sum_{j\in \pi(F)} a_{j}  e_j\right\Vert_{X_{\mathcal F_2,\mathbf E}}.\]

In particular, taking $a:=\mathbf 1_{\pi(F)}$ we get
\[ \left\Vert \sum_{i\in F} \mathbf e_i\right\Vert_{\mathbf E}= \left\Vert \sum_{j\in \pi(F)}  e_j\right\Vert_{X_{\mathcal F_2, \mathbf E}}.\]

By definition of the norm $\Vert\,\cdot\,\Vert_{X_{\mathcal F_2, \mathbf E}}$, it follows that one can find a set $F'\in \mathcal F_2$ such that 
\[ F'\subseteq \pi(F)\qquad{\rm and}\qquad \left\Vert \sum_{i\in F}  \mathbf e_i\right\Vert_{\mathbf E} = \left\Vert \sum_{j\in F'} \mathbf e_j\right\Vert_{\mathbf E}.\]

One can do the same with $\pi^{-1}$ and the set 
$F'\in\mathcal F_2$: this gives $F''\in\mathcal F_1$ such that 
\[  F''\subseteq \pi^{-1}(F')\qquad{\rm and}\qquad \left\Vert \sum_{j\in F'} \mathbf e_j\right\Vert_{\mathbf E}= \left\Vert \sum_{i\in F''} \mathbf e_i\right\Vert_{\mathbf E}.\]

So we obtain 
\[ \left\Vert \sum_{i\in F}  \mathbf e_i\right\Vert_{\mathbf E}= \left\Vert \sum_{i\in F''} \mathbf e_i\right\Vert_{\mathbf E}.\]

By strict $1$-unconditionality of the basis $(\mathbf e_i)$, it follows that $\pi(F)\subseteq F'$, since otherwise $F''$ is strictly contained in $F$. Hence $\pi(F)\in\mathcal F_2$.

\smallskip Now assume that $(\mathbf e_i)$ is both strictly $1$-unconditional and $1$-symmetric, and let $\pi$ be a permutation inducing an isometry $J: Z_{\F_1,\mathbf E}\to Z_{\F_2,\mathbf E}$.

If $F\in\mathcal F_1$ then, as above, we see that
\[ \left\Vert \sum_{i\in F} \mathbf e_i^*\right\Vert_{\mathbf E^*}= \left\Vert \sum_{j\in \pi(F)}  e_j^*\right\Vert_{X_{\mathcal F_2, \mathbf E}^*}.\]

By Fact \ref{strict}, it follows that if $\pi(F)\notin\mathcal F_2$, then $\left\Vert \sum_{i\in F} \mathbf e_i^*\right\Vert_{\mathbf E^*}> \left\Vert \sum_{j\in \pi(F)} \mathbf e_j^*\right\Vert_{\mathbf E^*}$; which is a contradiction since $(\mathbf e_i^*)$ is a $1$-symmetric basis of its closed linear span.

\smallskip (2) As in the proof of Theorem \ref{nonsense}, one checks that the Hilbert components of $X_{\mathcal F_1,\mathbf E}$ and $X_{\mathcal F_2, \mathbf E}$ are $1$-dimensional. By \cite{KW}, it follows that any isometry $J:X_{\mathcal F_1,\mathbf E}\to X_{\mathcal F_2, \mathbf E}$ 
must sign-permute $(e_i)$. And since ${\rm span}(\mathbf e_j^*,\mathbf e_k^*)$ is not isometrically equal to $\ell_2(2)$ for any $j\neq k$, the analogous conclusion holds for isometries $J:Z_{\F_1,\mathbf E}\to Z_{\F_2,\mathbf E}$. 
\end{proof}

\begin{remark} If we take $\mathbf E:=c_0$, then $X_{\mathcal F_1,\mathbf E}=c_0= X_{\mathcal F_2, \mathbf E}$ regardless of the families $\mathcal F_1$, $\mathcal F_2$, and hence any permutation $\pi:\N\to \N$ induces an isometry $J:X_{\mathcal F_1,\mathbf E}\to X_{\mathcal F_2, \mathbf E}$. This shows that \emph{some} assumption on $\mathbf E$ is needed to ensure that a permutation $\pi$ inducing an isometry between $X_{\mathcal F_1,\mathbf E}$ and $X_{\mathcal F_2, \mathbf E}$ must satisfy $\pi(\mathcal F_1)=\mathcal F_2$. 
\end{remark}

\smallskip From Proposition \ref{trucmachin}, we immediately obtain the following general version of Proposition \ref{l1}. 
\begin{corollary}\label{l1bis} Assume that $\K=\mathbb C$. Moreover, assume that the basis $(\mathbf e_i)$ of $\mathbf E$  is strictly $1$-$\,$unconditional, and that ${\rm span}(\mathbf e_j,\mathbf e_k)$ is not isometrically equal to $\ell_2(2)$ for any $j\neq k$. If $\F_1$ and $\F_2$ are two combinatorial families, then any isometry $J:X_{\F_1,\mathbf E}\to X_{\F_2,\mathbf E}$ is an $(\F_1,\F_2)\,$-$\,$compatible signed permutation of $(e_i)$. If $(\mathbf e_i)$ is $1$-$\,$symmetric and either $\F_1$ and $\F_2$ are compact or $(\mathbf e_i)$ is shrinking, then any isometry $J:X_{\F_1,\mathbf E}^*\to X_{\F_2,\mathbf E}^*$ is an $(\F_1,\F_2)\,$-$\,$compatible signed permutation of $(e_i^*)$.
\end{corollary}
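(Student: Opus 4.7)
The plan is to deduce both statements of the corollary directly from Proposition~\ref{trucmachin}, which has already established separately (i) that a permutation $\pi$ inducing an isometry must satisfy $\pi(\F_1)=\F_2$ under appropriate strict $1$-unconditionality assumptions, and (ii) that an isometry is forced to be a sign-permutation of the basis when no two-dimensional coordinate subspace is a Hilbert space. The only ``combination'' work needed is to verify that the hypotheses of (i) and (ii) can be applied simultaneously in our setting, and that the dual statement reduces to the primal one on $Z_{\F,\mathbf E}$.

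For the first claim, let $J:X_{\F_1,\mathbf E}\to X_{\F_2,\mathbf E}$ be an isometry. Since $\K=\mathbb C$ and ${\rm span}(\mathbf e_j,\mathbf e_k)$ is never isometrically $\ell_2(2)$, part~(2) of Proposition~\ref{trucmachin} says that there are signs $(\omega_i)$ and a permutation $\pi:\N\to\N$ with $Je_i=\omega_i e_{\pi(i)}$. The map $\widetilde J e_i:=e_{\pi(i)}$ is then still an isometry from $X_{\F_1,\mathbf E}$ onto $X_{\F_2,\mathbf E}$ by $1$-unconditionality of $(e_i)$. Hence by part~(1) of Proposition~\ref{trucmachin}, applied with the strict $1$-unconditionality of $(\mathbf e_i)$, the permutation $\pi$ must be $(\F_1,\F_2)$-compatible, which is exactly what we want.

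For the dual statement, assume $(\mathbf e_i)$ is $1$-symmetric and either the families are compact or $(\mathbf e_i)$ is shrinking. In both cases Fact~\ref{domi} tells us that $(e_i)$ is a shrinking basis of $X_{\F_j,\mathbf E}$, and therefore $Z_{\F_j,\mathbf E}=X_{\F_j,\mathbf E}^*$ for $j=1,2$. Consequently any isometry $J:X_{\F_1,\mathbf E}^*\to X_{\F_2,\mathbf E}^*$ is automatically an isometry $Z_{\F_1,\mathbf E}\to Z_{\F_2,\mathbf E}$. Since ${\rm span}(\mathbf e_j^*,\mathbf e_k^*)$ inherits the ``not $\ell_2(2)$'' property from ${\rm span}(\mathbf e_j,\mathbf e_k)$ (the $2$-dimensional span of $(\mathbf e_j,\mathbf e_k)$ is Hilbertian if and only if its dual is), part~(2) of Proposition~\ref{trucmachin} yields signs $(\omega_i)$ and a permutation $\pi$ with $Je_i^*=\omega_i e_{\pi(i)}^*$. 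Stripping off the signs and applying part~(1) of Proposition~\ref{trucmachin} (this is precisely where the $1$-symmetry hypothesis is used, since that statement requires $(\mathbf e_i)$ to be both strictly $1$-unconditional and $1$-symmetric in the dual setting), we conclude $\pi(\F_1)=\F_2$.

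The only possible subtlety is the passage from ``${\rm span}(\mathbf e_j,\mathbf e_k)$ not $\ell_2(2)$'' to ``${\rm span}(\mathbf e_j^*,\mathbf e_k^*)$ not $\ell_2(2)$'', but this is immediate because a finite-dimensional Banach space is isometrically Hilbertian exactly when its dual is. Beyond that, the argument is essentially bookkeeping: the corollary is an application of Proposition~\ref{trucmachin} with the ambient spaces identified via Fact~\ref{domi}.
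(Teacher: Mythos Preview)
Your proof is correct and follows essentially the same approach as the paper: invoke Proposition~\ref{trucmachin} for both parts, using Fact~\ref{domi} in the dual case to identify $Z_{\F_j,\mathbf E}$ with $X_{\F_j,\mathbf E}^*$. The paper's own proof is a two-line version of what you wrote; your added detail about stripping off signs before applying part~(1), and the remark on the $\ell_2(2)$ condition passing to the dual, are correct but already absorbed into the statement and proof of Proposition~\ref{trucmachin}(2).
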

\begin{proof} The first part is clear by Proposition \ref{trucmachin}. For the second part, note that in both cases $(e_i)$ is a shrinking basis of $X_{\F_1,\mathbf E}$ and $X_{\F_2,\mathbf E}$ by Fact \ref{domi}, \textit{i.e.} $Z_{\F_1,\mathbf E}=X_{\F_1,\mathbf E}^*$ and $Z_{\F_2,\mathbf E}=X_{\F_2,\mathbf E}^*$.
\end{proof}

\begin{remark} The result is not true in the real case: C.  Brech and A. Tcaciuc \cite{AC} have found a simple example of a compact combinatorial family $\F$ for which not all isometries of $X_{\F}=X_{\F,\ell_1}$ are signed permutations of $(e_i)$.
\end{remark}

\subsection{A variant of Theorem \ref{nonsense}} In this section, we prove the following variant of Theorem \ref{nonsense}. 
\begin{theorem}\label{nonsensebis} Assume that the basis $(\mathbf e_i)$ is strictly $1$-unconditional, and that either $\K=\R$ and $\mathbf E$ satisfies {\rm (H)}, or $\K=\mathbb C$ and ${\rm span}(\mathbf e_j^*, \mathbf e_k^*)$ is not isometrically equal to $\ell_2(2)$ for any $j\neq k$. Moreover, assume that for any finite set $F\subset\N$, the extreme points of the unit ball of ${\rm span}(\mathbf e_i^*,\; i\in F)$ cannot be covered by finitely many hyperplanes of 
${\rm span}(\mathbf e_i^*,\; i\in F)$. Then, if $\F_1$ and $\F_2$ are compact combinatorial families, any isometry $J:X_{\F_1,\mathbf E}^*\to X_{\F_2,\mathbf E}^*$ is an $(\F_1,\F_2)\,$-$\,$compatible signed permutation of $(e_i^*)$.
\end{theorem}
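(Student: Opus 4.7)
The plan is to adapt the compact-case proof of Theorem \ref{nonsense} by replacing Lemma \ref{ext} with Lemma \ref{extbis} throughout. Since $\F_1,\F_2$ are compact, Fact \ref{domi} already gives shrinking bases, hence $J(Z_{\F_1,\mathbf E})=Z_{\F_2,\mathbf E}$. The only genuinely new ingredient is the analogue of Fact \ref{presupp}, for which the hyperplane-covering hypothesis plays the role that Baire category played in the original proof.

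For the new Fact \ref{presupp}, I would fix a maximal $G\in\F_2^{MAX}$ and show that $B_G:=\bigcup_{j\in G}\mathrm{supp}(J^{-1}e_j^*)$ lies in $\F_1$. Writing $J^{-1}e_j^*=\sum_{i\in A_j}\lambda_{i,j}e_i^*$ with $A_j:=\mathrm{supp}(J^{-1}e_j^*)$, each $i\in B_G$ yields a nonzero linear form $\ell_i(\alpha):=\sum_{j\in G,\,i\in A_j}\alpha_j\lambda_{i,j}$ on the finite-dimensional space $V:=\mathrm{span}(\mathbf e_j^*,\,j\in G)$, and $J^{-1}z_\alpha^*=\sum_{i\in B_G}\ell_i(\alpha)e_i^*$ for $z_\alpha^*:=\sum_{j\in G}\alpha_je_j^*$. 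By Lemma \ref{extbis} and the maximality of $G$, $z_\alpha^*$ is extreme in $B_{X_{\F_2,\mathbf E}^*}$ exactly when every $\alpha_j\neq 0$ and $\sum_j\alpha_j\mathbf e_j^*$ is extreme in the unit ball of $V$; in that case $J^{-1}z_\alpha^*$ is also extreme, and Lemma \ref{extbis} forces its support $\{i\in B_G:\ell_i(\alpha)\neq 0\}$ to lie in $\F_1$, in particular to be finite.

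To conclude that $B_G$ itself is finite, I argue by contradiction: if $B_G=\{i_1,i_2,\dots\}$ were infinite, the subspaces $U_N:=\bigcap_{n\geq N}\ker(\ell_{i_n})$ of $V$ would stabilize at some $U_\infty\subsetneq V$ (proper because each $\ell_{i_n}$ is nonzero), and every admissible $\alpha$ would be eventually in every $\ker(\ell_{i_n})$, hence lie in $U_\infty$ and so in a proper hyperplane $H\subset V$. Together with the $|G|$ coordinate hyperplanes $\{\alpha_j=0\}$, this would cover all extreme points of the unit ball of $V$ by finitely many hyperplanes, contradicting the hypothesis. Once $B_G$ is finite, the same hypothesis applied to the finite family $\{\alpha_j=0\}_{j\in G}\cup\{\ell_i(\alpha)=0\}_{i\in B_G}$ produces an admissible $\alpha$ with $\mathrm{supp}(J^{-1}z_\alpha^*)=B_G$, whence $B_G\in\F_1$. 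Hereditarity then yields $\mathrm{supp}(J^{-1}e_j^*)\cup\mathrm{supp}(J^{-1}e_k^*)\in\F_1$ whenever $\{j,k\}\in\F_2$.

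The remainder runs as in Theorem \ref{nonsense}. The operator $T:\mathrm{span}(\mathbf e_j^*,\mathbf e_k^*)\to\mathrm{span}(\mathbf e_i^*,\,i\in A_j\cup A_k)$ given by the coordinates of $J^{-1}e_j^*,J^{-1}e_k^*$ is an isometric embedding by Lemma \ref{superbasic} and Corollary \ref{corsuperbasic}, so assumption (H) in the real case (respectively the Kalton--Wood $\ell_2(2)$-exclusion argument in the complex case) yields $A_j\cap A_k=\emptyset$. The singleton step (Fact \ref{singleton}) then goes through using shrinkingness. Finally, writing $Je_n^*=\omega_ne_{\pi(n)}^*$, I invoke the hyperplane hypothesis once more on each $F\in\F_1^{MAX}$: removing the $|F|$ coordinate hyperplanes produces an extreme $x^*\in B_{X_{\F_1,\mathbf E}^*}$ supported exactly on $F$, and $Jx^*$ is then extreme with support $\pi(F)$, forcing $\pi(F)\in\F_2$ by Lemma \ref{extbis}; hereditarity extends this to all $F\in\F_1$. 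The main obstacle is the finiteness-of-$B_G$ step, where the finite-dimensional kernel-stabilization argument must dovetail precisely with the hyperplane-covering hypothesis to produce the needed contradiction.
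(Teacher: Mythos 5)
Your proposal is correct and follows the same overall route as the paper: reduce to the compact-case machinery of Theorem \ref{nonsense}, with Lemma \ref{extbis} replacing Lemma \ref{ext} and the hyperplane-covering hypothesis replacing the Baire-category argument of Fact \ref{presupp}, then finish via (H) (real case) or Kalton--Wood (complex case) and a final extreme-point argument for the $(\F_1,\F_2)$-compatibility of $\pi$. The one place where you genuinely diverge is the finiteness of the supports: the paper first shows that each $J^{-1}e_n^*$ is finitely supported by writing $e_n^*$ as a linear combination of extreme points of the unit ball of the finite-dimensional space ${\rm span}(e_i^*,\,i\in F)$, $F\in\F_2^{MAX}$, whereas you derive the finiteness of $B_G$ directly from the increasing chain $U_N=\bigcap_{n\geqslant N}\ker(\ell_{i_n})$ stabilizing at a proper subspace, which together with the coordinate hyperplanes would cover ${\rm Ext}(B_V)$ --- contradicting the hypothesis. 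Your version is self-contained and has the advantage of only ever invoking Lemma \ref{extbis} for functionals whose support is exactly a maximal set, where condition (ii) of that lemma reduces to extremality in a single finite-dimensional ball; it costs one extra use of the covering hypothesis. Two small points to tidy up: in the complex case the paper does not deduce $A_j\cap A_k=\emptyset$ from Kalton--Wood applied to a two-dimensional embedding (that theory concerns surjective isometries); rather, since $J(Z_{\F_1,\mathbf E})=Z_{\F_2,\mathbf E}$, one argues directly that the Hilbert components are one-dimensional and concludes that $J$ sign-permutes $(e_i^*)$, bypassing the support-disjointness facts altogether. And your final step only yields $\pi(\F_1)\subseteq\F_2$; apply the same argument to $J^{-1}$ and $\pi^{-1}$ to get equality.
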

\begin{proof} Let us fix an isometry $J:X_{\F_1,\mathbf E}^*\to X_{\F_2,\mathbf E}^*$. Note that, since $(e_i)$ is a shrinking basis of $X_{\F_1,\mathbf E}$ and $X_{\F_2,\mathbf E}$ by Fact \ref{domi}, we have $J(Z_{\F_1,\mathbf E})= Z_{\F_2,\mathbf E}$.

\smallskip 
Assume that we have been able to show that $J$ sign-permutes the basis $(e_i^*)$, and let us show the associated permutation $\pi:\N\to\N$ must be $(\F_1, \F_2)\,$-$\,$compatible. It is enough to check that $\pi(F)\in\F_2$ for any set $F\in\F_1^{MAX}$ (this is enough to conclude that 
$\pi(\F_1)\subseteq \F_2$, and then one can apply the result to $\pi^{-1}$). So let  
$F\in\F_1^{MAX}$. By assumption, one can find an extreme point of the unit ball of ${\rm span}(\mathbf e_i^*,\; i\in F)$ with support exactly equal to $F$, say $\mathbf x^*=\sum_{i\in F} \alpha_i \mathbf e_i^*$ . Then $x^*:=\sum_{i\in F} \alpha_i e_i^*$ is an extreme point of $B_{X_{\F_1,\mathbf E}^*}$ by Lemma \ref{extbis}. So $Jx^*=\sum_{i\in F} \alpha_i \omega_i e_{\pi(i)}^*$ is an extreme point of  $B_{X_{\F_2,\mathbf E}^*}$; and by Lemma \ref{extbis} again, it follows that $\pi(F)\in \F_2$.

\smallskip Now, let us show that $J$ sign-permutes $(e_i^*)$.

\smallskip
(i) We first note that $J^{-1}e_n^*$ is finitely supported for every $n\in\N$. Indeed, since $\F_2$ is compact, we can choose $F\in\F_2^{MAX}$ such that $n\in F$. Then $e_n^*$ is a linear combination of extreme points of the unit ball of the finite-dimensional space ${\rm span}(e_i^*,\, i\in F)$. By Lemma \ref{extbis}, these points are extreme points of $B_{X_{\F_2,\mathbf E}^*}$. So $J^{-1}e_n^*$ is a linear combination of extreme points of $B_{X_{\F_1,\mathbf E}^*}$, and hence $J^{-1}e_n^*$ is finitely supported by Lemma \ref{extbis} again.

\smallskip (ii) Assume that $\K=\R$ and $\mathbf E$ satisfies {\rm (H)}. Looking back at the proof of Theorem~\ref{nonsense}, we see that the only thing to check is that if $j,k\in\N$ and $\{ j,k\}\in\F_2$, then ${\rm supp}(J^{-1}e_j^*)\cup {\rm supp}(J^{-1}e_k^*)\in\F_1$. 

Let us choose $F\in\F_2^{MAX}$ such that $\{ j,k\}\subseteq F$, and let $G:=\bigcup_{n\in F} {\rm supp}(J^{-1}e_n^*)$. Note that $G$ is a finite set by (i). Since $\F_1$ is hereditary, it is enough to show that $G\in\F_1$. 

For any $n\in F$, we may write 
\[ J^{-1}e_n^*=\sum_{i\in G} \lambda_{i,n} e_i^*\]
and, for every $i\in G$, at least one $\lambda_{i,n}$ is $\neq 0$. 
Then, for any $x^*=\sum_{n\in F} \alpha_n e_n^*$, we have 
\[ J^{-1}x^*=\sum_{i\in G} \left(\sum_{n\in F} \lambda_{i,n} \alpha_n\right) e_i^*.\]
By assumption, we can choose $x^*=\sum_{n\in F} \alpha_n e_n^*$ in such a way that $\mathbf x^*$ is an extreme point  of the unit ball of ${\rm span}(\mathbf e_i^*,\; i\in F)$ and $\sum_{n\in F}\lambda_{i,n}\alpha_n\neq 0$ for all $i\in G$, \textit{i.e.} ${\rm supp}(J^{-1}x^*)=G$. By Lemma \ref{extbis}, $x^*\in {\rm Ext}\bigl( B_{X_{\F_2,\mathbf E}^*}\bigr)$. So $J^{-1}x^*\in {\rm Ext}\bigl( B_{X_{\F_1,\mathbf E}^*}\bigr)$, and hence $G\in \F_1$ by Lemma \ref{extbis} again.

\smallskip (iii) Assume that $\K=\mathbb C$ and that ${\rm span}(\mathbf e_j^*, \mathbf e_k^*)$ is not isometrically equal to $\ell_2(2)$ for any $j\neq k$. Then, since $J(Z_{\F_1,\mathbf E})= Z_{\F_1,\mathbf E}$, one shows exactly as in the proof of Theorem~\ref{nonsense} that $J$ sign-permutes $(e_i^*)$.
\end{proof}

\begin{remark} Theorem \ref{nonsensebis} can be applied to $\mathbf E:=\ell_1$ in the complex case, but not in the real case. This is as it should be since, as already mentioned, the conclusion is simply false in the real case (\cite{AC}).
\end{remark}

\subsection{Three examples}\label{Exs} In this section, we give three examples in order to illustrate the assumptions in Theorem \ref{nonsense} and Corollary \ref{l1bis}. In these examples, the space $\mathbf E$ is the Orlicz sequence space $h_M$ associated with some Orlicz function $M:[0,\infty)\to [0,\infty)$.

\smallskip We follow the notation and definitions of \cite{LT} regarding Orlicz functions $M$ and Orlicz sequence spaces $\ell_M$ and $h_M$, and we use them without comment. However, let us recall the definition of the two natural norms on $\ell_M$, the \emph{Luxemburg norm} $\Vert\,\cdot\,\Vert_M$ and the \emph{Orlicz norm} $\Vert \,\cdot\,\Vert_M^*$: if $x=(x_i)_{i\in\N}\in \ell_M$, then 
\[ \Vert x\Vert_M=\inf\,\left\{ \lambda>0;\; \sum_{i=1}^\infty M\left(\frac{\vert x_i\vert}\lambda\right)\leqslant  1\right\}\]
and
\[ \Vert x\Vert^*_M=\sup\left\{  \sum_{i=1}^\infty\vert x_i y_i\vert;\; \sum_{i=1}^\infty M^*(\vert y_i\vert)\leqslant 1\right\},\]
where $M^*$ is the function complementary to $M$.

\smallskip
We require that all Orlicz functions $M$ are ``normalized'', \textit{i.e.} satisfy $M(1)=1$, so that the canonical basis $(\mathbf e_i)_{i\in\N}$ of $h_M$ is normalized (and, of course, $1$-symmetric) when $h_M$ is endowed with either the Luxemburg norm or the Orlicz norm. We also require that all Orlicz functions are ``non-trivial'' in the sense that $M(t)>0$ for $t>0$,  $\lim_{t\to 0} M'(t)=0$ and $\lim_{t\to\infty} M'(t)=\infty$, where $M'$ is the right derivative of $M$. Then $M^*$ is also a non-trivial Orlicz function such that $M^*(1)=1$.

\begin{example} Let $M$ be an Orlicz function such that $M$ and its complementary Orlicz function $M^*$ satisfy the $\Delta_2$ condition at $0$, and let $\mathbf E$ be the Orlicz sequence space $\ell_M=h_M$ endowed with the Orlicz norm. Assume that $M$ is $\mathcal C^2$-$\,$smooth on $(0,\infty)$ with $M''(u)>0$, and that $\liminf_{u\to 0} \frac{uM''(u)}{M'(u)}>0$. Let also $\F_1$ and $\F_2$ be two combinatorial families. If either $M''(u)\to 0$ or $M''(u)\to \infty$ as $u\to 0$, then any isometry $J:X_{\F_1,\mathbf E}^*\to X_{\F_2,\mathbf E}^*$ is an $(\F_1,\F_2)\,$-$\,$compatible signed permutation of $(e_i^*)$.
\end{example}
\begin{proof} Since $M$ and $M^*$ satisfy the $\Delta_2$ condition at $0$, we know that $\mathbf E$ is reflexive and that $\mathbf E^*$ is the space $\ell_{M^*}=h_{M^*}$ endowed with the Luxemburg norm. 

Since $M$ is $\mathcal C^1$-$\,$smooth and $M'(u)>0$ on $(0,\infty)$, the space $\mathbf E=h_M$ is smooth (see \cite[Theorem 7.2.5]{RR}), and hence $\mathbf E^*$ is strictly convex by reflexivity. Moreover, since $\bf E$ is reflexive, $(e_i)$ is a shrinking basis of $X_{\F,\mathbf E}$ for any combinatorial family $\F$, by Fact \ref{domi}. So, by Theorem \ref{nonsense}, it is enough to show that if $j\neq k\in\N$ and $T:{\rm span}(\mathbf e_j^*, \mathbf e_k^*)\to \mathbf E^*$ is a linear isometric embedding, then $T\mathbf e_j^*$ and $T\mathbf e_k^*$ have disjoint supports.

Note that since $M''(u)>0$ on $(0,\infty)$, the Orlicz complementary function $M^*$ is $\mathcal C^1$-$\,$smooth on $[0,\infty)$ with $(M^*)'(t)=(M')^{-1}(t)$, and $\mathcal C^2$-$\,$smooth on $(0,\infty)$ with \[ (M^*)''(t)=\frac1{M''((M')^{-1}(t))}\cdot\]
It follows that 
\[ \limsup_{t\to 0} \frac{t(M^*)''(t)}{(M^*)'(t)}=\limsup_{u\to 0} \frac{M'(u)}{uM''(u)}<\infty.\]

In the terminology of \cite{Beata}, this means that $M^*$ satisfies the condition $\Delta_{2+}$ at $0$. Hence, the result follows from \cite{Beata}; more precisely, from the proofs of the main results of \cite{Beata}.
\end{proof}

\smallskip For the other two examples, we need the following fact, which is certainly well known.
\begin{fact}\label{2dim} Let $M$ be an Orlicz function, and let $\mathbf E=h_M$ be endowed with either the Luxemburg norm or the Orlicz norm. Then ${\rm span}(\mathbf e_1, \mathbf e_2)$ is isometrically equal to $\ell_2(2)$ if and only if $M(t)+M(\sqrt{1-t^2})=1$ for all $t\in (0,1)$.
\end{fact}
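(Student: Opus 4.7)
The plan is to treat the Luxemburg and Orlicz cases separately, using the explicit formula for each norm.

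For the Luxemburg norm the argument is direct. Since $M$ is continuous and strictly increasing on $[0,\infty)$, the infimum defining $\Vert(a,b)\Vert_M$ is attained, so $\Vert(a,b)\Vert_M$ is the unique $\lambda>0$ for which $M(|a|/\lambda)+M(|b|/\lambda)=1$ (for $(a,b)\neq 0$). Setting $\lambda=\sqrt{|a|^2+|b|^2}$ and $t=|a|/\lambda$ yields $|b|/\lambda=\sqrt{1-t^2}$, and $t$ ranges over $(0,1)$ as $(a,b)$ ranges over the nonzero pairs. Hence ``$\Vert(a,b)\Vert_M=\sqrt{|a|^2+|b|^2}$ for every $(a,b)$'' is equivalent to ``$M(t)+M(\sqrt{1-t^2})=1$ for every $t\in(0,1)$'', the strict monotonicity of $M$ delivering both directions of the biconditional.

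For the Orlicz norm I would pass to the dual, via the standard duality $(\ell_M,\Vert\cdot\Vert_M^*)^*\cong(\ell_{M^*},\Vert\cdot\Vert_{M^*})$ in which $(\mathbf e_i)$ and its biorthogonal sequence are identified with the canonical bases. Since $\ell_2(2)$ is self-dual, ${\rm span}(\mathbf e_1,\mathbf e_2)$ endowed with the Orlicz norm is isometrically $\ell_2(2)$ if and only if ${\rm span}(\mathbf e_1^*,\mathbf e_2^*)$ endowed with the $M^*$-Luxemburg norm is so; by the previous paragraph applied to $M^*$, the latter happens iff $M^*(s)+M^*(\sqrt{1-s^2})=1$ for $s\in(0,1)$. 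To close the loop, one verifies that this condition on $M^*$ is equivalent to the stated one on $M$: differentiating the $M$-equation gives $M'(t)\sqrt{1-t^2}=tM'(\sqrt{1-t^2})$, and combining this with Young's equality $M^*(M'(t))+M(t)=tM'(t)$ leads to the corresponding functional identity for $M^*$ after parametrizing $(M'(t),M'(\sqrt{1-t^2}))$ on a Euclidean circle.

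The main obstacle is precisely this final Legendre-transform step. Geometrically the equivalence is natural---the polar of a Euclidean disk is itself Euclidean---but the quantitative matching of the two functional equations, possibly after a normalization factor ensuring that the Orlicz basis is genuinely of unit norm, requires careful bookkeeping.
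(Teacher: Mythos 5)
Your treatment of the Luxemburg norm is correct and is exactly the paper's argument: since $M$ is continuous and strictly increasing, $\Vert a\mathbf e_1+b\mathbf e_2\Vert_M$ is the unique $\lambda>0$ with $M(\vert a\vert/\lambda)+M(\vert b\vert/\lambda)=1$, and the substitution $t=\vert a\vert/\sqrt{\vert a\vert^2+\vert b\vert^2}$ gives both implications. The paper also handles the Orlicz norm by the same duality you invoke, but it does so with the single phrase ``by duality'' and no details.

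The Orlicz half of your proposal, however, contains a genuine gap, and it is precisely the step you defer as ``careful bookkeeping.'' Your duality reduces the Orlicz norm for $M$ to the Luxemburg norm for $M^*$ and hence to the identity $M^*(s)+M^*(\sqrt{1-s^2})=1$ on $(0,1)$; but this identity is \emph{not} equivalent to the stated identity for $M$, so the loop cannot be closed by any Legendre-transform computation. Concretely, take $M(t)=t^2$, which is normalized ($M(1)=1$) and satisfies $M(t)+M(\sqrt{1-t^2})\equiv 1$; its complementary function is $M^*(s)=s^2/4$, for which $M^*(s)+M^*(\sqrt{1-s^2})\equiv 1/4$, and correspondingly $\Vert x\Vert_M^*=2\Vert x\Vert_{\ell_2}$, so ${\rm span}(\mathbf e_1,\mathbf e_2)$ is \emph{not} isometrically equal to $\ell_2(2)$ for the Orlicz norm even though the stated condition on $M$ holds. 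Your own sketch confirms the obstruction: differentiating gives $M'(t)/t=M'(u)/u=:k$ on the circle $t^2+u^2=1$, so the points $(M'(t),M'(u))=(kt,ku)$ lie on a circle of radius $k$, not radius $1$, and Young's equality yields $M^*(M'(t))+M^*(M'(u))=k-1$ rather than the desired identity. Closing the loop would require the normalization $(M^*)^{-1}(1)=1$, i.e.\ $M^*(1)=1$, which (for the genuine Legendre-transform complementary function) is never satisfied when $M(1)=1$, since $M^*(1)=\sup_{0\leqslant t\leqslant 1}(t-M(t))<1$. So what you have identified is not bookkeeping but a real issue: either the complementary function must be renormalized (a convention the paper asserts but does not justify when it writes $M^*(1)=1$), or the Orlicz-norm half of the equivalence has to be restated with the condition on $M^*$ in place of the condition on $M$. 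As written, your proof establishes the Luxemburg case only.
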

\begin{proof} By duality, it is enough to prove this in the case of the Luxemburg norm. By definition of the Luxemburg norm (and since $M$ is continuous and strictly increasing), if $a,b\in\K$ and $(a,b)\neq (0,0)$ then 
\[ \Vert a\mathbf e_1+b\mathbf e_2\Vert_M=M_{a,b}^{-1}(1),\]
where $M_{a,b}:(0,\infty)\to (0,\infty)$ is defined by $M_{a,b}(\lambda):= M(\vert a\vert/\lambda)+M(\vert b\vert/\lambda)$. So,  ${\rm span}(\mathbf e_1, \mathbf e_2)$ is isometrically equal to $\ell_2(2)$ if and only if 
\[ M\left(\frac{a}{\sqrt{a^2+b^2}}\right)+M\left(\frac{b}{\sqrt{a^2+b^2}}\right)=1\qquad\hbox{for every $a,b>0$}.\]
\end{proof}

\begin{example} Assume that $\K=\mathbb C$. Let $M$ be an Orlicz function such that $M$ and its complementary Orlicz function $M^*$ satisfy the $\Delta_2$ condition at $0$, and let $\mathbf E$ be the Orlicz sequence space $\ell_M=h_M$ endowed with either the Luxemburg norm or the Orlicz norm. Let also $\F_1$ and $\F_2$ be two combinatorial families. If there exists $t\in(0,1)$ such that $M(\sqrt{1-t^2})\neq 1-M(t)$, then any isometry $J:X_{\F_1,\mathbf E}^*\to X_{\F_2,\mathbf E}^*$ is an $(\F_1,\F_2)\,$-$\,$compatible signed permutation of $(e_i^*)$. This holds in particular if $\lim_{t\to 0^+} {M'(t)}/t\neq \lim_{u\to 1^-} M'(u)$.
\end{example}
\begin{proof} Since $M$ and $M^*$ are continuous and strictly increasing, it is clear that $(\mathbf e_i)$ is strictly $1$-unconditional. Moreover $(\mathbf e_i)$ is also shrinking because $\bf E$ is reflexive. Finally, ${\rm span}(\mathbf e_1, \mathbf e_2)$ is not isometrically equal to $\ell_2(2)$ by Fact \ref{2dim}. So we may apply Corollary~\ref{l1bis}. For the final assertion, note that if $M(\sqrt{1-t^2})= 1-M(t)$ for all $t\in (0,1)$ then $M'(t)/t=M'(\sqrt{1-t^2})/\sqrt{1-t^2}$, and hence $\lim_{t\to 0^+} {M'(t)}/t= \lim_{u\to 1^-} M'(u)$.
\end{proof}

\smallskip
\begin{example} Let $c:=1/(2-\sqrt{2})$, and consider the function $M:[0,\infty)\to [0,\infty)$ defined as follows:
\[ M(t):=\left\{ \begin{matrix} c\, ({1-\sqrt{1-t^2}})& &0\leqslant t\leqslant 1/\sqrt{2},\\
\displaystyle ct+1-c & &1/\sqrt2<t\leqslant 1,\\
\displaystyle c\,\frac{t^2}2+1-\frac{c}2& & t>1.
\end{matrix}
\right.
\]
This is an Orlicz function satisfying the $\Delta_2$ condition at $0$. Let $\mathbf E$ be the Orlicz sequence space $\ell_M=h_M$ endowed with the Luxemburg norm. Then $\mathbf E^*$ is strictly convex and $\mathbf E$ is not a Hilbert space. However, ${\rm span}(\mathbf e_1,\mathbf e_2)$ is isometrically equal to 
$\ell_2(2)$. Consequently, for some compact combinatorial families $\F$, there are isometries of $X_{\F,\mathbf E}$ which are not signed permutations of $(e_i)$.
\end{example}
\begin{proof} This example is taken from \cite{Gr}. Note that the definition of $M(t)$ for $t>1$ is what it is only because we want that $M'(t)\to\infty$ as $t\to\infty$.

\smallskip  The space $\mathbf E$ is smooth because $M$ is $\mathcal C^1$-$\,$smooth with $M'(t)>0$ on $(0,\infty)$, see \cite[Theorem 7.2.3]{RR}. Moreover, $\mathbf E$ is isomorphic to $\ell_2$ because $M(t)$ behaves like $t^2$ when $t\to 0$;  in particular $\mathbf E$ is reflexive. Hence, 
$\mathbf E^*$ is strictly convex.

\smallskip Since $M$ is not the function $t^2$ on $[0,1]$, it is ``clear'' that $\mathbf E=h_M$ is not a Hilbert space. However, let us give a detailed proof. So, assume that $M$ is an Orlicz function such that $\mathbf E=h_M$ is a Hilbert space, and let us show that $M(t)=t^2$ on $[0,1]$. For simplicity, we give the proof assuming additionally that $M$ is differentiable. Since $(\mathbf e_i)$ is a normalized $1$-unconditional basis of the Hilbert space $\mathbf E$, it is in fact an orthonormal basis of $\mathbf E$. In particular, for any $a,b,c\in\K$, we have 
\[ \Vert a \mathbf e_1+b\mathbf e_2+c\mathbf e_3\Vert_{\mathbf E}^2= \vert a\vert^2+\vert b\vert^2+\vert c\vert ^2.\]
By definition of the Luxemburg norm, this means that for any $(x,y,z)\in\R^3$, the following implication holds:
\[ x^2+y^2+z^2=1\implies M(x)+M(y)+M(z)=1.\]
In other words, for any $x,y\in\R^2$ such that $x^2+y^2\leqslant 1$, we have 
\[ M(x)+M(y)+M(\sqrt{1-x^2-y^2})=1.\]
Differentiating with respect to $x$ and setting $u:=\sqrt{1-x^2-y^2}$ it follows that $\frac{M'(x)}{x}=\frac{M'(u)}{u}$ whenever $0< x<1$ and $u\leqslant \sqrt{1-x^2}$. This implies that the function $\frac{M'(t)}{t}$ is constant on $(0,1)$; and since $M(0)=0$ and $M(1)=1$, we conclude that $M(t)=t^2$ on $[0,1]$.

\smallskip Finally, ${\rm span}(\mathbf e_1,\mathbf e_2)$ is isometrically equal to $\ell_2(2)$ by Fact \ref{2dim}.
\end{proof}

\section{Schreier families}\label{Schreier}

\subsection{Schreier transfinite sequences} Recall that the classical Schreier family is \[ \Sc_1=\{\emptyset\}\cup \{ \emptyset\neq F\subset\N;\; \vert F\vert\leqslant \min(F)\}.\] 

The so-called ``generalized Schreier families'' have been introduced in \cite{AA}. Let $\omega_1$ be the first uncountable ordinal. We will say that a (transfinite) sequence 
$(\Sc_\alpha)_{0\leqslant  \alpha<\omega_1}$ of families of finite subsets of $\N$ is a \emph{transfinite Schreier sequence} if

\begin{itemize}
    \item $\Sc_0=\{ F\subset\N;\; \vert F\vert\leqslant 1\}$;
    \item $\Sc_{\beta +1}=\bigl\{\bigcup_{i=1}^{n}E_i; \; n \leqslant  E_1<E_2< \dots <E_n \text{ and }E_i\in \Sc_{\beta}\bigr\}$ for every $\beta<\omega_1$;
    \item $\Sc_{\alpha}= \{F \subset \N; \; F \in \Sc_{\alpha_n} \text{ for some }n \leqslant  F\}$ if $\alpha$ is a limit ordinal, where $(\alpha_n)_{n\in\N}$ is a preassigned strictly increasing sequence of ordinals such that $\sup_n\alpha_n=\alpha$. 
\end{itemize}

We will in fact never consider the family $\mathcal S_0$, which is here just for the sake of having a consistent notation.

\smallskip
Note that there is  not just one transfinite Schreier sequence $(\Sc_\alpha)_{\alpha<\omega_1}$, since for limit ordinals $\alpha$ the family $\Sc_\alpha$ depends on the choice of the increasing sequences $(\alpha_n)$ converging to $\alpha$. On the other hand, $\Sc_k$ is uniquely determined for every natural number $k$; and in particular, $\Sc_k$ for $k:= 1$ is indeed the classical Schreier family $\Sc_1$. 

\smallskip
Note also that in the definition of $\Sc_{\beta+1}$, some sets $E_i$ are allowed to be empty: $n\leqslant  E$ is always true if $E=\emptyset$, and similarly $E<E'$ is true if $E$ or $E'$ is $\emptyset$. 

\smallskip
One checks by transfinite induction that for any $1\leqslant  \alpha<\omega_1$, $\Sc_\alpha$ is a spreading combinatorial family such that $\Sc_\alpha^{MAX}=\{\{1\}\}$ and $\{m,n\}\in \Sc_\alpha$ for any $2\leqslant  m<n$. 
 Moreover, denoting by $K^{(\xi)}$ the $\xi$-th Cantor-Bendixson derivate of a compact metric space $K$, it is shown in \cite{AA} that \[ \Sc_\alpha^{(\omega^\alpha)}=\{\emptyset\}.\]

\smallskip These properties hold true for any transfinite Schreier sequence $(\Sc_\alpha)_{\alpha<\omega_1}$, \mbox{\it i.e.} regardless of the choice of the ``approximating sequences'' $(\alpha_n)$. 

\smallskip However, we will need a property that does depend on the choice of the approximating sequences: 

\begin{fact} There exist transfinite Schreier sequences $(\Sc_\alpha)_{\alpha<\omega_1}$ with the following additional property: for every limit ordinal $\alpha<\omega_1$, the approximating sequence $(\alpha_n)$ is made up of  successor ordinals, $\alpha_n=\beta_n+1$, and the sequence 
$(\Sc_{\beta_n})_{n\in\N}$ is increasing. Any transfinite Schreier sequence with that property will be said to be \emph{good}.
\end{fact}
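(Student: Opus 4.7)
\smallskip
\noindent\textbf{Proof plan.} The plan is to construct a good transfinite Schreier sequence by transfinite induction on $\alpha<\omega_1$, using the key observation that $\mathcal{S}_\gamma\subseteq\mathcal{S}_{\gamma+1}$ for every ordinal $\gamma$. The latter follows directly from the definition of $\mathcal{S}_{\gamma+1}$: given $F\in\mathcal{S}_\gamma$, take $n=1$ and $E_1=F$; the requirement $n\leqslant E_1$ reduces to $1\leqslant\min F$, which is automatic since $\N$ starts at $1$. Iterating, $\mathcal{S}_\gamma\subseteq\mathcal{S}_{\gamma+k}$ for every natural number $k$.

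At successor stages $\alpha=\gamma+1$ no choice is needed. At each limit $\alpha$, we pick the approximating sequence canonically from the Cantor normal form. Writing $\alpha=\sigma+\omega^{\xi}$ with $\xi\geqslant 1$ the smallest exponent appearing, we set: $\alpha_n:=\sigma+n$, $\beta_n:=\sigma+n-1$ when $\xi=1$; $\alpha_n:=\sigma+\omega^\eta\cdot n+1$, $\beta_n:=\sigma+\omega^\eta\cdot n$ when $\xi=\eta+1\geqslant 2$; and $\alpha_n:=\sigma+\omega^{\xi_n}+1$, $\beta_n:=\sigma+\omega^{\xi_n}$ (for a pre-fixed cofinal increasing sequence $(\xi_n)\uparrow\xi$) when $\xi$ itself is a limit. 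In each case $(\alpha_n)$ is a strictly increasing sequence of successor ordinals with supremum $\alpha$.

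The chain property $\mathcal{S}_{\beta_n}\subseteq\mathcal{S}_{\beta_{n+1}}$ is then verified case by case. When $\xi=1$ this is immediate from basic monotonicity since $\beta_{n+1}=\beta_n+1$. In the other two cases each $\beta_{n+1}$ is itself a limit that has already been treated earlier in the induction; applying the same canonical recipe to $\beta_{n+1}$ one sees that the first element $\alpha_1^{\beta_{n+1}}$ of its approximating sequence has the form $\beta_n+\mu+1$ for some ordinal $\mu$, and recursively unfolding once more yields $\mathcal{S}_{\beta_n+1}\subseteq\mathcal{S}_{\alpha_1^{\beta_{n+1}}}$. Since $1\leqslant\min F$ automatically, any $F$ in $\mathcal{S}_{\alpha_1^{\beta_{n+1}}}$ lies in $\mathcal{S}_{\beta_{n+1}}$ by the definition of the latter family, giving $\mathcal{S}_{\beta_n+1}\subseteq\mathcal{S}_{\beta_{n+1}}$; combined with $\mathcal{S}_{\beta_n}\subseteq\mathcal{S}_{\beta_n+1}$ from basic monotonicity, the chain property follows.

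The main obstacle is verifying this absorption step rigorously in the cases $\xi\geqslant 2$: one must show that the recursive unfolding of the canonical approximating sequences at successive limits produces the required inclusions at every level. This is handled by an auxiliary induction on the ordinal rank of the exponent $\xi$ appearing in the Cantor normal form, using at each level only the basic monotonicity $\mathcal{S}_\gamma\subseteq\mathcal{S}_{\gamma+1}$ together with the inductive description of the first element of each approximating sequence.
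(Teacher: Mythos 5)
Your basic observation that $\Sc_\gamma\subseteq\Sc_{\gamma+1}$ is correct, and your induction does handle the cases where the relevant exponents are successors (there $\beta_{n+1}=\beta_n+\omega^\eta$ really does have $\beta_n$ as the prefix of its canonical decomposition, and the recursion on $\eta$ closes). The gap is in the case $\alpha=\sigma+\omega^\xi$ with $\xi$ a limit ordinal, where you set $\beta_n:=\sigma+\omega^{\xi_n}$. Your argument rests on the claim that the first element of the canonical approximating sequence of $\beta_{n+1}$ has the form $\beta_n+\mu+1$. This fails in general: since $\omega^{\xi_n}+\omega^{\xi_{n+1}}=\omega^{\xi_{n+1}}$, the Cantor normal form of $\beta_{n+1}=\sigma+\omega^{\xi_{n+1}}$ has prefix $\sigma$, not $\beta_n$, so its first approximant is $\sigma+\omega^\eta+1$ where $\eta$ is either $\xi_{n+1}-1$ or the first term of the pre-fixed cofinal sequence of $\xi_{n+1}$. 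In the latter case nothing forces $\eta\geqslant\xi_n$, and if $\eta<\xi_n$ then $\sigma+\omega^\eta+1<\beta_n$, so it cannot be written as $\beta_n+\mu+1$ and the unfolding breaks. The required inclusion $\Sc_{\sigma+\omega^{\xi_n}}\subseteq\Sc_{\sigma+\omega^{\xi_{n+1}}}$ is then a monotonicity statement across non-consecutive levels of the hierarchy, and such statements are genuinely false in general: a set $F$ with small minimum must be caught by one of the first $\min F$ approximants of $\beta_{n+1}$, and if those lie far below $\beta_n$ this fails (compare Cantor--Bendixson ranks at the singleton $\{2\}$). So the ``auxiliary induction using only basic monotonicity and the description of the first approximant'' cannot close as described; you would need to impose, recursively, coherence conditions tying the cofinal sequence chosen for each limit exponent $\xi_{n+1}$ to the preceding term $\xi_n$, and these are neither formulated nor verified.

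The paper avoids this entirely by quoting an absorption lemma of Causey: for any $\beta<\gamma$ there is $p\in\N$ with $\Sc_\beta\subseteq\Sc_{\gamma+p}$. Starting from an \emph{arbitrary} cofinal sequence $(\xi_n)$ in $\alpha$, one sets $\beta_1:=\xi_1$ and $\beta_n:=\xi_n+p_n+\cdots+p_2$, and the chain $\Sc_{\beta_n}\subseteq\Sc_{\beta_{n+1}}$ follows from the same elementary fact you use, in the form ``$\Sc_\beta\subseteq\Sc_\gamma$ implies $\Sc_{\beta+1}\subseteq\Sc_{\gamma+1}$''. If you want a self-contained construction, you must either prove such an absorption statement yourself or control all canonical cofinal sequences simultaneously; as written, the limit-exponent case is not proved.
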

\begin{proof} We construct the transfinite sequence $(\Sc_\alpha)_{\alpha<\omega_1}$ by induction. The only thing we have to prove is the following: if $\alpha<\omega_1$ is a limit ordinal and if $\Sc_\xi$ has been defined for every $\xi<\alpha$, then it is possible to find an approximating sequence $(\alpha_n)$ for $\alpha$ with the required property. 

Let $(\xi_n)_{n\in\N}$ be any strictly increasing sequence of ordinals such that $\sup_{n\in\N} \xi_n =\alpha$. By \cite[Proposition 3.1, (viii)]{Cau}, for every $n\geqslant  2$, one can find an integer $p_n\in\N$ such that $\Sc_{\xi_{n-1}}\subset \Sc_{\xi_{n}+p_n}$. So if we set 
$\beta_1:=\xi_1$ and $\beta_{n}:=\xi_n+p_n+\cdots +p_2$ for $n\geqslant  2$, then the sequence $(\Sc_{\beta_n})_{n\in\N}$ is increasing; so we may take $\alpha_n:=\beta_n+1$.
\end{proof}

\medskip We now state the precise form of part (2) of Theorem \ref{main}.

\begin{proposition}\label{restated} Let $1<p<\infty$, and let $(\Sc_\alpha)_{\alpha<\omega_1}$ be a transfinite Schreier sequence.
\begin{itemize}
\item[\rm (a)] If $p\neq 2$ then, for every $1\leqslant  \alpha<\omega_1$, all isometries of $X_{\Sc_\alpha, p}^*$ are diagonal.
\item[\rm (b)] If $p=2$ then: for every successor ordinal $1\leqslant  \alpha<\omega_1$, all isometries of $X_{\Sc_\alpha, 2}$ are diagonal; and  the same holds true for every $1\leqslant  \alpha<\omega_1$ if $(\Sc_\alpha)_{\alpha<\omega_1}$ is good.
\end{itemize}
\end{proposition}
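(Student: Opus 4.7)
The plan is to treat parts (a) and (b) by quite different arguments, because the machinery required for each is quite different.

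For part (a), since $p \neq 2$ the space $\ell_p$ satisfies hypothesis (H) of Theorem \ref{nonsense}, and since $\Sc_\alpha$ is a compact combinatorial family, part (1) of Theorem \ref{main} applies. Hence every isometry $J$ of $X_{\Sc_\alpha,p}^*$ is an $\Sc_\alpha$-compatible signed permutation $Je_i^* = \omega_i e_{\pi(i)}^*$ with $\pi(\Sc_\alpha) = \Sc_\alpha$. What remains is the purely combinatorial claim, alluded to after the statement of Theorem \ref{main}, that the only permutation of $\N$ preserving $\Sc_\alpha$ is the identity. The cleanest proof I would give proceeds through Cantor-Bendixson ranks. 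Since any such $\pi$ induces a homeomorphism of the compact space $\Sc_\alpha\subset \{0,1\}^\N$, it preserves every derived set $\Sc_\alpha^{(\xi)}$, and hence the CB rank of each singleton $\{n\}\in \Sc_\alpha$. A direct computation (plainly visible already for $\Sc_1$, where the rank of $\{n\}$ is $n-1$) shows this rank is a strictly increasing function of $n$, forcing $\pi(n)=n$ for every $n$. Once $\pi=\mathrm{id}$, $J$ is diagonal.

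For part (b), the above strategy fails completely because $\ell_2$ does not satisfy (H)—every two-dimensional coordinate plane of $\ell_2$ is isometric to $\ell_2(2)$—and indeed, as Example \ref{counter} shows, an isometry on a $2$-convexified combinatorial Banach space need not even be a signed permutation of the basis a priori. Following the lines of \cite{ABC}, I would work directly on $X_{\Sc_\alpha,2}$ and split the task into (i) showing that any isometry $J$ of $X_{\Sc_\alpha,2}$ is a signed permutation of $(e_i)$, and (ii) concluding via the CB-rank argument of the previous paragraph that the associated permutation is the identity. Step (ii) is then immediate. Step (i) is proved by transfinite induction on $\alpha$. For the base case $\alpha=1$ I would exploit the fact that $\{1\}$ is the unique maximal singleton of $\Sc_1$, which distinguishes $e_1$ via an extreme-point/smoothness analysis of $B_{X_{\Sc_1,2}}$ (and its dual ball described by Lemma \ref{ext}) and forces $J$ to preserve the line $\K e_1$ up to a sign; one then iterates after restricting to the band of coordinates $i\geqslant 2$, where $\{2\}$ plays an analogous distinguished role, and so on. For the successor step $\alpha=\beta+1$, the recursive decomposition $\Sc_{\beta+1}=\{\bigcup_{i=1}^n E_i:\, n\leqslant E_1<\dots<E_n,\ E_i\in \Sc_\beta\}$ lets one recognize, within $X_{\Sc_{\beta+1},2}$, natural isometric copies of subspaces governed by $\Sc_\beta$, to which the inductive hypothesis can be transferred.

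For a limit ordinal $\alpha$ with a good approximating sequence $\alpha_n=\beta_n+1$, the monotonicity of $(\Sc_{\beta_n})$ is exactly what allows the induction to pass through the limit: every $F\in \Sc_\alpha$ lies in some $\Sc_{\beta_n}$ with $n\leqslant \min F$, so the filtration by the subspaces $\overline{\mathrm{span}}(e_i^*,\,i\geqslant n)\cap Z_{\Sc_{\beta_n},\mathbf E}$ approximates $X_{\Sc_\alpha,2}^*$ from inside by spaces handled at the successor stage $\beta_n+1$; a routine compactness/extraction argument then transfers signed-permutation behavior to $X_{\Sc_\alpha,2}$ itself. The main obstacle is precisely this limit step in the $p=2$ case: without the goodness hypothesis, the $\Sc_{\alpha_n}$ need not nest, the filtration argument breaks, and we cannot see that $J$ preserves coordinate directions globally. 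This is exactly why part (b) asserts the conclusion unconditionally for successor ordinals but needs the transfinite Schreier sequence to be good in the limit case.
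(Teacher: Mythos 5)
Part (a) of your proposal is correct and follows the paper's route exactly: reduce to Theorem \ref{main}(1) and then show that the identity is the only permutation of $\N$ preserving $\Sc_\alpha$ via Cantor--Bendixson ranks of singletons. Be aware, though, that the claim that $n\mapsto \mathrm{rk}_{\Sc_\alpha}(\{n\})$ is strictly increasing is not a ``direct computation'' for general $\alpha$: the paper devotes Lemma \ref{rk} to it, with a double transfinite induction (first on $\alpha$ to show that $\{m\}\cup F\in\Sc_\alpha$ and $m<n<p<F$ imply $\{n,p\}\cup F\in\Sc_\alpha$, then on the derivation order $\mu$ to propagate this to $\Sc_\alpha^{(\mu)}$). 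Your observation for $\Sc_1$ is right, but it does not by itself cover limit ordinals, where $\Sc_\alpha$ depends on the approximating sequences.

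Part (b) has a genuine gap. Your step (i) --- that every isometry of $X_{\Sc_\alpha,2}$ is a signed permutation of $(e_i)$ --- is exactly the hard content of the proposition, and the transfinite induction on $\alpha$ you sketch does not deliver it. In the base case, after you have identified $\K e_1$, the ``band of coordinates $i\geqslant 2$'' is not invariant under $J$ until you already know $J$ is a signed permutation, and $\{2\}$ is not a maximal set of $\Sc_1$, so it plays no role analogous to $\{1\}$. In the successor step, an arbitrary isometry of $X_{\Sc_{\beta+1},2}$ has no reason to preserve the ``natural isometric copies of subspaces governed by $\Sc_\beta$,'' so there is nothing to which the inductive hypothesis can be applied; the same objection defeats the proposed filtration at limit ordinals, where moreover the subspaces $Z_{\Sc_{\beta_n},\mathbf E}$ live in different ambient spaces and are not $J$-invariant. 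The paper does not induct on $\alpha$ at all: it fixes one isometry $T$ of $X_{\Sc_\alpha,2}$ and proves \emph{diagonality} directly by an induction on the coordinate index $k$, using the extreme-point description of $B_{X_{\Sc_\alpha,p}}$ (Theorem \ref{extSc}), the structure of $\Sc_\alpha^{MAX}$ for successor ordinals (Lemmas \ref{max} and \ref{maxsucc}), and, crucially, a delicate localization step (Step 6: $\mathrm{supp}(d_k)\leqslant k$) built on auxiliary sequences $(k_j)$, $(n_j)$ and test vectors of the form $e_k+\sum_{i}\sum_{n\in F_i'}f_n$ whose norms are estimated from both sides. The goodness hypothesis enters only to rerun that one step at limit ordinals (replacing $\Sc_\beta$ by $\Sc_{\beta_k}$ and using $\Sc_{\beta_n}\subset\Sc_{\beta_k}$ for $n\leqslant k$), not to make a filtration argument converge. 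None of this machinery appears in your proposal, so part (b) remains unproved as written. (Your reduction ``signed permutation $\Rightarrow$ diagonal'' would be fine if step (i) were established, since strict $1$-unconditionality of the $\ell_2$ basis and Proposition \ref{trucmachin}(1) force the permutation to preserve $\Sc_\alpha$, and then the rank argument applies.)
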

\begin{remark} In the complex case, we can show that in fact (a) holds true for $p=2$ as well; see Corollary \ref{Scfin}.
\end{remark}

In what follows, we fix a transfinite Schreier sequence $(\Sc_\alpha)_{\alpha<\omega_1}$, determined by the choice of an approximating sequence $(\alpha_n)$ for every limit ordinal $\alpha<\omega_1$.

\subsection{The case $p\neq 2$} Let $1<p\neq 2<\infty$. By the already proved part (1) of Theorem~\ref{main}, we know that any isometry of  $X_{\Sc_\alpha, p}^*$, $\alpha<\omega_1$  is an $\Sc_\alpha\,$-$\,$compatible signed permutation of $(e_i^*)$. So, to prove that the isometries of  $X_{\Sc_\alpha, p}^*$, for any $1\leqslant  \alpha<\omega_1$, are diagonal, it is enough to show that the only permutation $\pi: \N\to\N$ such that $\pi(\Sc_\alpha)=\Sc_\alpha$ is the identity. This fact was hinted at in \cite{BP}, but we couldn't locate a complete proof.

\smallskip Given any compact metric space $K$, we can define a function $\mathrm{rk}_K : K\to \omega_1$ which associates to $x \in K$ the largest ordinal $\beta$ for which $x \in K^{(\beta)}$. Now, if $\F$ is any compact family of subsets of $\N$ and if $\pi:\N\to\N$ is a permutation, then the map $F\mapsto \pi(F)$ is a homeomorphism from $\F$ onto $\pi(\F)$, and hence ${\rm rk}_\F (F)={\rm rk}_{\pi(\F)}(\pi(F))$ for every $F\in\F$. In particular, if $\F$ contains all singletons and $\pi(\F)=\F$, then ${\rm rk}_\F(\{\pi(n)\})={\rm rk}_\F(\{ n\})$ for all $n\in\N$. As observed in \cite{BP}, it follows that if the map $n\mapsto {\rm rk}_\F(\{ n\})$ is injective, then $\pi$ has to be the identity. Therefore, it is enough to prove the following lemma.

\begin{lemma}\label{rk}
For any $1\leqslant  \alpha <\omega_1$, the map $n\mapsto \mathrm{rk}_{\Sc_\alpha}(\{n\})$ is strictly increasing.  
\end{lemma}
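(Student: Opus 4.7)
The plan is a transfinite induction on $\alpha$. The key observation is that for any combinatorial family $\F$ containing the singletons and any $n\in\N$, the \emph{trace} $\F[n]:=\{G\subseteq(n,\infty):\{n\}\cup G\in\F\}$ is a compact hereditary family (of subsets of $(n,\infty)$), and neighborhoods of $\{n\}$ in $\F\subseteq\{0,1\}^\N$ correspond bijectively to neighborhoods of $\emptyset$ in $\F[n]$; whence $\mathrm{rk}_\F(\{n\})=\mathrm{rk}_{\F[n]}(\emptyset)$. Setting $\nu_\alpha(n):=\mathrm{rk}_{\Sc_\alpha}(\{n\})$, I would compare $\Sc_\alpha[n]$ with $\Sc_\alpha[n+1]$ in each of the three induction cases.

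For the base case $\alpha=1$, the trace reads $\Sc_1[n]=\{G\subseteq(n,\infty):|G|\leqslant n-1\}$, and a direct induction on $k$ shows that $\{F\subseteq\N:|F|\leqslant k\}$ has Cantor--Bendixson rank $k$ at $\emptyset$; hence $\nu_1(n)=n-1$, which is strictly increasing.

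For the limit case $\alpha=\sup_k\alpha_k$, any $\{n\}\cup G$ with $G\subseteq(n,\infty)$ belongs to $\Sc_\alpha$ iff it belongs to $\Sc_{\alpha_k}$ for some $k\leqslant n$; therefore $\Sc_\alpha[n]=\bigcup_{k\leqslant n}\Sc_{\alpha_k}[n]$. For finite unions of closed subsets of a compact metric space one checks by transfinite induction that $(K_1\cup K_2)^{(\beta)}=K_1^{(\beta)}\cup K_2^{(\beta)}$ for every ordinal $\beta$ (the limit step uses a short cofinality argument), so $\nu_\alpha(n)=\max_{k\leqslant n}\nu_{\alpha_k}(n)$. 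If this maximum is attained at some $k^*\leqslant n$, then $k^*\leqslant n+1$ as well, and the inductive hypothesis applied to $\alpha_{k^*}<\alpha$ gives $\nu_\alpha(n+1)\geqslant\nu_{\alpha_{k^*}}(n+1)>\nu_{\alpha_{k^*}}(n)=\nu_\alpha(n)$.

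The successor case $\alpha=\beta+1$ is the main obstacle. Unwinding the definition, $G\in\Sc_{\beta+1}[n]$ iff $G=H_1\cup\cdots\cup H_m$ with $m\leqslant n$, $H_1\in\Sc_\beta[n]$, $H_2,\ldots,H_m\in\Sc_\beta$ and $H_1<H_2<\cdots<H_m$; passing from $n$ to $n+1$ permits one additional $\Sc_\beta$-block far to the right. I would quantify this by showing that the extra block contributes $\omega^\beta=\mathrm{rk}_{\Sc_\beta}(\emptyset)$ to the Cantor--Bendixson rank at $\emptyset$, yielding $\nu_{\beta+1}(n+1)\geqslant\nu_{\beta+1}(n)+\omega^\beta$ and in particular strict inequality. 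Concretely, one iterates the derivative on configurations whose ``last block'' traces out all of $\Sc_\beta$ near $\emptyset$ while the ``first $n-1$ blocks'' realize a rank-$\nu_{\beta+1}(n)$ configuration by the induction on $\beta$. The precise ``rank jump per extra block'' calculation, via an ordinal computation on iterated derivatives, is the technical heart of the argument and the main anticipated difficulty.
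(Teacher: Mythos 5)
Your base and limit cases check out: the trace identity $\mathrm{rk}_{\F}(\{n\})=\mathrm{rk}_{\F[n]}(\emptyset)$ is justified because $G\mapsto\{n\}\cup G$ is a homeomorphism of $\F[n]$ onto a clopen neighbourhood of $\{n\}$ in $\F$; the computation $\nu_1(n)=n-1$ is correct; and at limit ordinals the reduction $\nu_\alpha(n)=\max_{k\leqslant n}\nu_{\alpha_k}(n)$ together with the inductive hypothesis for $\alpha_{k^*}<\alpha$ does give strict increase. The successor case, however, is a genuine gap --- and it is exactly the step you yourself flag as not done. The inequality $\nu_{\beta+1}(n+1)\geqslant\nu_{\beta+1}(n)+\omega^{\beta}$ is plausible, but ``iterating the derivative on configurations whose last block traces out all of $\Sc_\beta$ near $\emptyset$'' is not a proof: the Cantor--Bendixson derivative acts on the whole concatenation family $\{H_1\cup\dots\cup H_m\}$, not on one configuration at a time, and a set in the derived family may be approximated by sets that perturb several blocks simultaneously, merge blocks, or change the block decomposition altogether. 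Making the ``rank jump per extra block'' rigorous amounts to computing the index of a transfinite convolution of families, which is a nontrivial lemma of Alspach--Argyros type; moreover, in passing from $n$ to $n+1$ you must also account for the change of the first-block constraint from $\Sc_\beta[n]$ to $\Sc_\beta[n+1]$, which your sketch does not address. As it stands, the technical heart of your argument is an unproved assertion.

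For comparison, the paper's proof avoids all rank arithmetic. It first shows by transfinite induction on $\alpha$ that if $m<n<p<F$ and $\{m\}\cup F\in\Sc_\alpha$, then $\{n,p\}\cup F\in\Sc_\alpha$ (Fact \ref{f1}); then, by a second transfinite induction on the derivative level $\mu$, that the same implication holds with $\Sc_\alpha$ replaced by $\Sc_\alpha^{(\mu)}$ (Fact \ref{f2}); finally, taking $F=\emptyset$ and letting $p\to\infty$, the sets $\{n,p\}\in\Sc_\alpha^{(\mu)}$ accumulate at $\{n\}$, so $\{m\}\in\Sc_\alpha^{(\mu)}$ and $m<n$ force $\{n\}\in\Sc_\alpha^{(\mu+1)}$, which is exactly strict monotonicity of the rank --- no rank is ever evaluated. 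If you want to keep your framework, note that you only need the qualitative statement that the rank at $\emptyset$ strictly increases when one more block is allowed, and even that is most cheaply obtained by an argument of the paper's ``upgrade a singleton to a pair inside every derivative'' type rather than by ordinal computation.
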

\begin{proof} Recall that if $F$ is a subset of $\N$ and $k\in\N$, then ``$k<F$'' means that $k< i$ for all $i\in F$. In particular, this is true if $F=\emptyset$.
\begin{fact}\label{f1} Let $F$ be a finite subset of $\N$, and let $m,n,p\in\N$. 
    If $m<n<p<F$ and $\{m\}\cup F \in \Sc_\alpha$ where $1\leqslant \alpha<\omega_1$, then $\{n,p\}\cup F \in \Sc_\alpha$.
\end{fact}
\begin{proof}[Proof of Fact \ref{f1}]
    We prove the statement by transfinite induction on $\alpha$.
    
    For $\alpha=1$, since $\{m\}\cup F \in \Sc_1$, it follows that $\vert F\vert +1= \vert \{m\} \cup F\vert\leqslant  \min(\{m\} \cup F)= m$. Hence, $\vert \{n,p\}\cup F\vert =\vert F\vert +2\leqslant  m+1\leqslant  n=\min(\{n,p\}\cup F)$, and so $\{n,p\}\in \Sc_1$.

    Assume that the statement holds true for all $1\leqslant  \beta<\alpha$, and let us prove it for $\alpha$. 

    If $\alpha$ is a successor ordinal, then $\alpha= \beta+1$, for some $\beta$. Since $\{m\}\cup F \in \Sc_\alpha$, it follows by definition that $\{m\}\cup F = G_1 \cup \dots \cup G_r$ where $G_1<\dots <G_r $, $G_i \in \Sc_\beta$ and $r\leqslant  \min G_1=m$. We have $G_1=\{m\} \cup H$ where $H>p>n>m$ because $F>p>n>m$, hence $\{n,p\}\cup H\in \Sc_\beta$ by the inductive hypothesis. Therefore, $\{n,p\}\cup F=(\{n,p\}\cup H)\cup G_2 \cup \dots \cup G_r \in \Sc_\alpha$, since $r\leqslant  \min(\{n,p\}\cup H)=n$. 

   If $\alpha$ is a limit ordinal then, since $\{m\}\cup F \in \Sc_\alpha$, it follows by definition that $\{m\}\cup F  \in \Sc_{\alpha_k}$ for some $k \leqslant  \min(\{m\}\cup F)=m$. Hence, $\{n,p\}\cup F\in \Sc_{\alpha_k}$ by the inductive hypothesis, and so $\{n,p\}\cup F\in \Sc_{\alpha}$ since $k \leqslant  n = \min(\{n,p\}\cup F)$.
    \end{proof}
    \begin{fact}\label{f2} Let $1\leqslant \alpha<\omega_1$, and let $m<n<p\in\N$. If $F$ is a finite subset of $\N$ such that
 $m<n<p<F$ and $\{m\}\cup F \in \Sc_\alpha^{(\mu)}$ where $0\leqslant  \mu<\omega_1$, then $\{n,p\}\cup F\in \Sc_\alpha^{(\mu)}$.
    \end{fact}
    \begin{proof}[Proof of Fact \ref{f2}] 
         We prove the statement by transfinite induction on $\mu$.   For $\mu=0$, the statement holds true by Fact \ref{f1}.  Assume that it holds true for all  $\xi<\mu$, and let us prove it for $\mu$. 

           If $\mu$ is a limit ordinal, then the statement is trivially true. 

           Assume that $\mu$ is a successor ordinal, $\mu=\xi +1$. Since $\{m\}\cup F \in \Sc_\alpha^{(\mu)}$, we can find a sequence $(G_k)\subseteq \Sc_\alpha^{(\xi)}$ such that $G_k \to \{m\}\cup F$ and $G_k\neq \{m\}\cup F$. For $k$ large enough, we may write $G_k= \{m\}\cup F \cup H_k$, where $H_k>F$ and $H_k\neq \emptyset$. Moreover, $H_k \to \emptyset$. By the inductive hypothesis, we have $\{n,p\}\cup F\cup H_k\in \Sc_\alpha^{(\xi)}$. Hence, $\{n,p\}\cup F \in \Sc_\alpha^{(\mu)}$ since $\{n,p\}\cup F \cup H_k \to \{n,p\}\cup F$ and $\{n,p\}\cup F \cup H_k \neq \{n,p\}\cup F$.
\end{proof}
\begin{fact}\label{f3}
    If $m<n $ and $\{m\}\in \Sc_\alpha^{(\mu)}$ where $0\leqslant \mu<\omega_1$, then $\{n\}\in \Sc_\alpha^{(\mu+1)}$.
\end{fact}
\begin{proof}
    By Fact \ref{f2}, we have $\{n,p\}\in \Sc_\alpha^{(\mu)}$ for all $p>n$; and $\{n,p\}\to \{n\}$ when $p\to \infty$.
\end{proof}
Fact \ref{f3} implies that if $m<n$ then $\mathrm{rk}_{\Sc_\alpha}(\{m\})<\mathrm{rk}_{\Sc_\alpha}(\{n\})$, which finishes the proof of Lemma \ref{rk}. 
\end{proof}

\subsection{The case $p=2$} When $p=2$, we have not proved that any isometry of $X_{\Sc_\alpha, 2}^*$ is a signed permutation of $(e_i^*)$, so we cannot use the above argument. Instead, we will directly show that the isometries of $X_{\Sc_\alpha,2}$ are diagonal for every $1\leqslant \alpha<\omega_1$ by adapting the proof given in \cite{ABC} that 
the isometries of the combinatorial Banach space $X_{\Sc_n}$ are diagonal when $n$ is a natural number. To handle the limit ordinals, we will need to assume that the transfinite Schreier sequence $(\Sc_\alpha)_{\alpha<\omega_1}$ is good, but we suspect this is not necessary. 

\smallskip The proof we give works for both the real and the complex case. However, in the complex case a different approach is possible, and it gives a better result; see Corollary~\ref{Scfin}.

\subsubsection{Preliminary facts} The following theorem from \cite{ABC} gives a description of the extreme points of the unit ball of $X_{\Sc_\alpha,p}$, $1\leqslant  \alpha< \omega_1$, $1<p<\infty$. In \cite{ABC}, the authors consider \emph{real} spaces only, but minor modifications of the proof give the result in the complex case as well. 
\begin{theorem}\label{extSc}
   Let $\F:=\Sc_\alpha$, $1\leqslant  \alpha< \omega_1$, and assume that the sequence $(\Sc_{\alpha_n})$ is increasing if $\alpha$ is a limit ordinal. Let $1<p<\infty$, and let $x \in {X_{\F,p}}$ with $\Vert x\Vert=1$. Then $x \in \mathrm{Ext}(B_{X_{\F,p}}) $ if and only if 
   \begin{itemize}
   \item[{\tiny$\bullet$}] $x\in c_{00}$, 
   \item[{\tiny$\bullet$}] the family $\A_x := \{F\in \F; \; \sum_{i \in F}\vert x_i\vert^p=1\}$ contains a set $F\notin \F^{MAX}$, 
   \item[{\tiny$\bullet$}] for all $i \leqslant  \max\mathrm{supp}(x) $, there exists $F \in \A_x$ such that $i \in F$.
   \end{itemize}
\end{theorem}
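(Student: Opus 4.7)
For sufficiency, assume $x\in c_{00}$ with $\Vert x\Vert=1$ and properties (c2), (c3); suppose $x=(y+z)/2$ with $y,z\in B_{X_{\F,p}}$. For each $F\in \A_x$ the chain
\[1=\sum_{i\in F}|x_i|^p=\sum_{i\in F}\Bigl|\tfrac{y_i+z_i}{2}\Bigr|^p\leqslant \sum_{i\in F}\Bigl(\tfrac{|y_i|+|z_i|}{2}\Bigr)^p\leqslant \tfrac12\sum_{i\in F}|y_i|^p+\tfrac12\sum_{i\in F}|z_i|^p\leqslant 1\]
must collapse to equalities throughout. Strict convexity of $t\mapsto t^p$ on $[0,\infty)$ (valid since $p>1$) together with the equality case of the triangle inequality then forces $y_i=z_i=x_i$ for all $i\in F$, in both the real and the complex case. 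Condition (c3) propagates this to every $i\leqslant N:=\max\mathrm{supp}(x)$. For an index $k>N$, (c2) provides a non-maximal $F_0\in \A_x$; replacing $F_0$ with $F_0\cap\mathrm{supp}(x)$ we may arrange $F_0\subseteq [1,N]$, with $F_0$ still in $\A_x$ and still non-maximal. The spreading property of $\Sc_\alpha$ --- more precisely the extension fact ``$F_0\in \Sc_\alpha$ non-maximal $\Rightarrow F_0\cup\{k\}\in \Sc_\alpha$ for every $k>\max F_0$'', obtained by transfinite induction on $\alpha$ from the block decomposition in the definition of $\Sc_\alpha$ --- then yields $F_0\cup\{k\}\in \Sc_\alpha$. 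The norm inequality $\sum_{i\in F_0\cup\{k\}}|y_i|^p\leqslant 1$, combined with $\sum_{i\in F_0}|y_i|^p=\sum_{i\in F_0}|x_i|^p=1$, forces $y_k=0$, and similarly $z_k=0$. Hence $y=x=z$.

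For necessity, let $x$ be extreme with $\Vert x\Vert=1$. The driving perturbation principle is: if some $i_0\in \N$ satisfies $\sup_{F\in \F,\,F\ni i_0}\sum_{j\in F}|x_j|^p<1$ --- equivalently, if no element of $\A_x$ contains $i_0$ (the supremum being attained by compactness of $\Sc_\alpha$) --- then $y:=x+\delta e_{i_0}$ and $z:=x-\delta e_{i_0}$ both lie in $B_{X_{\F,p}}$ for all small $\delta>0$, contradicting extremality. Thus every $i\in \N$ belongs to some $F_i\in \A_x$. Restricting to $i\leqslant N$ yields (c3); and for any $i_0>N$ the set $F_0:=F_{i_0}\setminus\{i_0\}$ satisfies $\sum_{F_0}|x_j|^p=1$ (since $x_{i_0}=0$) and $F_0\subsetneq F_{i_0}\in \Sc_\alpha$, which gives (c2).

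To establish $x\in c_{00}$, suppose for contradiction that $\mathrm{supp}(x)$ is infinite, and pick indices $i_k\in\mathrm{supp}(x)$ with $i_k\to\infty$, each lying in some $F_{i_k}\in \A_x$ by the above. Since $\A_x$ is closed in the compact space $\Sc_\alpha$, a subsequence $(F_{i_{k_j}})$ converges in $\{0,1\}^\N$ to some $F^*\in \Sc_\alpha$; continuity of the map $F\mapsto \sum_{l\in F}|x_l|^p$ on $\Sc_\alpha$ (which follows from the uniform decay of the tails of $x\in X_{\F,p}$) gives $F^*\in \A_x$. For $j$ large one has $F_{i_{k_j}}\supseteq F^*$ (as $F^*$ is finite) while $i_{k_j}\notin F^*$ (as $i_{k_j}\to\infty$), whence $\sum_{l\in F_{i_{k_j}}\setminus F^*}|x_l|^p=1-1=0$, forcing $x_{i_{k_j}}=0$ and contradicting $i_{k_j}\in\mathrm{supp}(x)$.

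The step I expect to be most delicate is the upward-extension fact for non-maximal sets of $\Sc_\alpha$ required in the sufficiency argument: spreading alone only yields $F_0\cup\{k\}\in \Sc_\alpha$ for $k$ at least as large as some given extension $j$, so verifying that $k=\max F_0+1$ is always an admissible extension requires a transfinite induction on $\alpha$ that splits into cases according to whether the outermost block-decomposition of $F_0$ is ``tight'' (i.e.\ $n=\min E_1$) or not, and in the tight case recurses on the non-maximality of the last block $E_n$ in the smaller family.
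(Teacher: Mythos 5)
Your proof is correct, but there is nothing in the paper to compare it with line by line: the paper does not prove Theorem \ref{extSc} at all, it imports it from \cite{ABC} with the remark that minor modifications handle complex scalars. Your argument is a sound, self-contained reconstruction of what such a proof must look like. The sufficiency direction (strict convexity of $t\mapsto t^p$ plus the equality case of the triangle inequality forcing $y_i=z_i=x_i$ on every set of $\A_x$, then killing the coordinates beyond $N:=\max\mathrm{supp}(x)$ by adjoining a large index to a non-maximal set of $\A_x$) and the necessity direction (the perturbation $x\pm\delta e_{i_0}$ when $\sup\{\sum_{j\in F}|x_j|^p;\,F\in\F,\ i_0\in F\}<1$, followed by the compactness argument showing $\mathrm{supp}(x)$ is finite) are both complete, and your handling of the complex case is right. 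Three small remarks. First, the upward-extension fact you single out as delicate is exactly Lemma \ref{max} of the paper (quoted from \cite{G}), so you may simply invoke it; your sketch of the induction (tight versus non-tight outer block decomposition, recursing on non-maximality of the last block in the tight case) is the correct one, and the structure of maximal sets needed for the tight case is Lemma \ref{maxsucc}. Second, your appeal to ``the supremum is attained by compactness'' already uses the continuity of $F\mapsto\sum_{l\in F}|x_l|^p$ on $\Sc_\alpha$, which you only justify two sentences later via the uniform tail estimate $\sup_{F\in\F}\sum_{l\in F,\,l>N}|x_l|^p=\Vert x-P_{\llbracket 1,N\rrbracket}x\Vert^p\to 0$; state that estimate first, since both the attainment of the supremum and the closedness of $\A_x$ rest on it. Third, the derivation of condition (c2) requires $\max\mathrm{supp}(x)<\infty$, so the paragraph establishing $x\in c_{00}$ should logically precede it; this is purely an ordering issue, as the two arguments are independent.
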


\medskip We will also need two lemmas concerning the Schreier families $\Sc_\alpha$.

\smallskip
\begin{lemma}\label{max}
    If $1\leqslant  \alpha <\omega_1$ and $G \in \Sc_\alpha \minus \Sc_\alpha^{MAX}$, then $G \cup \{l\}\in \Sc_\alpha$, for all $l > G$.
\end{lemma}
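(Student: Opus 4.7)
My plan is to proceed by transfinite induction on $\alpha$, using the already-established spreading property of $\Sc_\alpha$ at one point and a decomposition argument at another. The base case $\alpha = 1$ is immediate from the definition: $G$ non-maximal in $\Sc_1$ means $|G| < \min G$ (strictly), so $|G \cup \{l\}| = |G| + 1 \leqslant \min G = \min (G\cup\{l\})$ for any $l > G$, giving $G \cup \{l\} \in \Sc_1$.

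For the successor step $\alpha = \beta + 1$, take a witness $j \notin G$ with $G \cup \{j\} \in \Sc_{\beta+1}$, and split on the position of $j$. If $j \leqslant \max G$, then for any $l > G$ we have $l > \max G \geqslant j$, so the map $\sigma$ on $G \cup \{j\}$ defined by $\sigma(i) = i$ for $i \in G$ and $\sigma(j) = l$ satisfies $\sigma(n) \geqslant n$ and is injective (since $l \notin G$); its image is exactly $G \cup \{l\}$, so spreading gives $G \cup \{l\} \in \Sc_{\beta+1}$ directly. This covers the ``interspersed'' or ``smaller'' witness cases.

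The substantive case is $j > \max G$, where spreading alone is insufficient when $l \in (\max G, j)$. Here I would choose a decomposition $G \cup \{j\} = H_1 \cup \cdots \cup H_s$ with $H_i \in \Sc_\beta$, $H_1 < \cdots < H_s$, and $s \leqslant \min H_1 = \min G$ (the equality holds since $j > \max G$). Since $j = \max(G\cup\{j\})$, necessarily $j \in H_s$; set $H_s' := H_s \setminus \{j\}$. If $H_s' = \emptyset$, then $G = H_1 \cup \cdots \cup H_{s-1}$ uses only $s - 1 < \min G$ blocks, and I can append the singleton $\{l\}$ as an $s$-th block: this yields a valid decomposition of $G \cup \{l\}$ in $\Sc_{\beta+1}$. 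If $H_s' \neq \emptyset$, then $G = H_1 \cup \cdots \cup H_{s-1} \cup H_s'$ is an $s$-block decomposition, and $H_s' \subsetneq H_s \in \Sc_\beta$ is non-maximal in $\Sc_\beta$ (witnessed by $j > \max H_s'$); applying the inductive hypothesis at $\beta$ to $H_s'$ and $l$ (note $l > \max G \geqslant \max H_s'$), we obtain $H_s' \cup \{l\} \in \Sc_\beta$, and replacing $H_s'$ with $H_s' \cup \{l\}$ gives a valid decomposition of $G \cup \{l\}$ in $\Sc_{\beta+1}$.

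For a limit ordinal $\alpha$, pick $j \notin G$ with $G \cup \{j\} \in \Sc_\alpha$, so $G \cup \{j\} \in \Sc_{\alpha_n}$ for some $n \leqslant \min(G \cup \{j\})$. Then $G \in \Sc_{\alpha_n}$ by hereditarity and is non-maximal there (again witnessed by $j$), so by the inductive hypothesis $G \cup \{l\} \in \Sc_{\alpha_n}$. Since $n \leqslant \min(G \cup \{j\}) \leqslant \min G = \min(G \cup \{l\})$, we conclude $G \cup \{l\} \in \Sc_\alpha$ by definition. I expect the main obstacle to be the successor case with $j > \max G$: the key observation making the argument work is that removing the largest element $j$ from the last block $H_s$ automatically makes the resulting $H_s'$ non-maximal in $\Sc_\beta$, which is exactly what is needed to apply the inductive hypothesis and ``trade'' an extra block (impossible since $s = \min G$) for an extension of the last block.
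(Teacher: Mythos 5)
Your proof is correct. There is, however, nothing in the paper to compare it with: the paper does not prove Lemma \ref{max} at all, but simply cites \cite[Lemma 3.1]{G}. Your transfinite induction is a sound, self-contained reconstruction of that result. The base case $\alpha=1$ and the limit case are handled correctly (the limit case needs only hereditarity of $\Sc_{\alpha_n}$ and the inequality $n\leqslant\min(G\cup\{j\})\leqslant\min G=\min(G\cup\{l\})$, which you verify). In the successor case you correctly isolate the only genuinely problematic situation, namely $j>\max G$ with $\max G<l<j$, where the spreading property is of no help; and your key move there --- locating $j=\max(G\cup\{j\})$ in the last block $H_s$ of an admissible decomposition, observing that $H_s'=H_s\minus\{j\}$ is then automatically non-maximal in $\Sc_\beta$, and invoking the inductive hypothesis at $\beta$ to trade $j$ for $l$ inside that block without increasing the number of blocks --- is exactly the right idea. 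What the citation buys the paper is brevity; what your argument buys is self-containedness, at the cost of having to track the bookkeeping of admissible decompositions. Two cosmetic points you may wish to make explicit: the degenerate case $G=\emptyset$ (trivial, since $\Sc_\alpha$ contains all singletons), and the normalization that empty blocks may be discarded from the decomposition of $G\cup\{j\}$ so that $H_1$ is nonempty and contains $\min G$, which is what justifies writing $s\leqslant\min H_1=\min G$. Neither affects the validity of the argument.
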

\begin{proof} This is \cite[Lemma 3.1]{G}. 
\end{proof}

\smallskip
\begin{lemma}\label{maxsucc}
 Let $\alpha<\omega_1$ be a successor ordinal, $\alpha=\beta+1$. If $G \in \Sc_\alpha^{MAX}$, then $G= \bigcup_{i=1}^m G_i$ where $G_1<\dots<G_m \in \Sc_\beta^{MAX}$ and $m=\min G_1$.    
\end{lemma}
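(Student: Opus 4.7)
The plan is to choose a particularly well-behaved decomposition witnessing $G \in \Sc_{\beta+1}$ and then let the maximality of $G$ and a lex-maximality argument force the conclusion. Write a decomposition of $G$ as $G = G_1 \cup \dots \cup G_m$ with $G_1 < G_2 < \dots < G_m$, each $G_i \in \Sc_\beta$, and $m \leqslant \min G_1$, allowing some of the $G_i$ to be empty (using the convention from the paper that $E < E'$ holds whenever $E$ or $E'$ is empty, and that $n \leqslant E$ holds if $E = \emptyset$).

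First, I would establish that $m = \min G_1$ for the decomposition realizing the maximum possible number of nonempty pieces. Since any nonempty $G_1$ must contain $\min G$, we have $\min G_1 = \min G$, so $m \leqslant \min G$; hence such a maximum exists. If $m < \min G_1$, pick any integer $l > \max G$; then $G_1, \dots, G_m, \{l\}$ is a valid decomposition of $G \cup \{l\}$ (the singleton lies in $\Sc_\beta$ and $m+1 \leqslant \min G_1$), which would place $G \cup \{l\} \in \Sc_{\beta+1}$ and contradict the maximality of $G$. Hence $m = \min G_1$.

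Now, among all decompositions of $G$ having exactly $m = \min G_1$ entries (padding by $\emptyset$ if needed), I would choose one for which $(|G_1|, |G_2|, \dots, |G_m|)$ is lexicographically maximal; the claim is that each $G_i$ then lies in $\Sc_\beta^{MAX}$. Suppose instead that $G_i \notin \Sc_\beta^{MAX}$ for some $i$. By Lemma \ref{max}, $G_i \cup \{a\} \in \Sc_\beta$ for every $a > \max G_i$. If some $G_j$ with $j > i$ is nonempty, let $j^*$ be the smallest such index and set $a := \min G_{j^*}$; then replacing $G_i$ by $G_i \cup \{a\}$ and $G_{j^*}$ by $G_{j^*} \setminus \{a\}$ yields another valid decomposition of $G$ (hereditarity of $\Sc_\beta$ handles the diminished entry, and the ordering is preserved because $a < \min(G_{j^*} \setminus \{a\})$ whenever that set is nonempty, while empty gaps compare trivially), but its cardinality sequence is lex-larger than the original, a contradiction. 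Otherwise every $G_j$ with $j > i$ is empty; pick $a > \max G$ and let $G'_i := G_i \cup \{a\}$: this produces a valid decomposition of $G \cup \{a\}$ in $\Sc_{\beta+1}$, contradicting $G \in \Sc_{\beta+1}^{MAX}$.

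Since $\emptyset \notin \Sc_\beta^{MAX}$, the conclusion $G_i \in \Sc_\beta^{MAX}$ forces every entry to be nonempty, so the final decomposition consists of exactly $m = \min G_1$ genuine sets each lying in $\Sc_\beta^{MAX}$. I expect the main subtlety to be the bookkeeping in the shifting step, namely verifying that after moving $a$ across possibly several empty entries the resulting sequence still satisfies $H_1 < H_2 < \dots < H_m$ and $m \leqslant \min H_1$; both facts follow from the empty-set conventions and from the observation that $G_1$ itself is never modified by the shift.
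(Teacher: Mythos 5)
Your argument is correct and reaches the stated conclusion. The first step ($m=\min G_1$, obtained by appending a singleton $\{l\}$ with $l>\max G$ and contradicting maximality) is identical to the paper's. For the second step (each piece is $\Sc_\beta$-maximal) the paper argues differently: given a non-maximal inner piece $G_{i_0}$, it performs one global ``cascade shift'', replacing $G_i$ by $(G_i\setminus\{\min G_i\})\cup\{\min G_{i+1}\}$ for every $i>i_0$, absorbing $\min G_{i_0+1}$ into $G_{i_0}$, and appending a fresh $l>\max G$ to the last block; the telescoping union is $G\cup\{l\}\in\Sc_\alpha$, contradicting $G\in\Sc_\alpha^{MAX}$ in a single stroke. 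You instead make one local swap (absorb $\min G_{j^*}$ into $G_i$) and contradict the lexicographic maximality of an extremally chosen decomposition, reserving the contradiction with $G\in\Sc_\alpha^{MAX}$ for the case where nothing nonempty follows $G_i$. Both routes lean on Lemma~\ref{max} in exactly the same way; yours trades the paper's telescoping bookkeeping for an auxiliary extremal principle, and is in fact more careful about empty pieces than the paper, which tacitly assumes all $G_i$ are nonempty when it writes $\min G_{i+1}$. One small slip: your closing observation that ``$G_1$ itself is never modified by the shift'' is false when $i=1$, but the constraint $m\leqslant\min H_1$ survives anyway, since $a>\max G_1$ gives $\min(G_1\cup\{a\})=\min G_1=m$.
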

\begin{proof} This is proved in \cite[Lemma 3.8]{GL} when $\alpha$ is a natural number. 
Let $G \in \Sc_\alpha^{MAX}$. By definition, $G=\bigcup_{i=1}^m G_i$ where $G_1< \dots < G_m \in \Sc_\beta$ and $m \leqslant  G_1$. \\
First, we observe that $m= \min G_1$. Indeed, otherwise  $m+1\leqslant  G_1$, so, taking any $l> \max G$, we have that $\bigcup_{i=1}^m G_i \cup \{l\} \in \Sc_\alpha$, \textit{i.e.} $G\cup \{l\}\in \Sc_\alpha$, which contradicts the fact that $G \in \Sc_\alpha^{MAX}$. \\
Now, we show that $G_i \in \Sc_\beta^{MAX}$ for each $i \in \{1,\dots, m\}$. Towards a contradiction, suppose that there exists $i_0 \in \{1,\dots, m\}$ such that $G_{i_0} \in \Sc_\beta \minus \Sc_\beta^{MAX}$.\\ 
If $i_0 =m$, then $G_m \notin \Sc_\beta ^{MAX}$, and Lemma \ref{max} implies that for every $l > \max G_m=\max G$, we have $G_m \cup \{l\}\in \Sc_\beta$. Hence, $\bigcup_{i=1}^{m-1}G_i \cup (G_m\cup \{l\})\in \Sc_\alpha$, which is a contradiction because $G\in \Sc_\alpha^{MAX}$.\\
If $i_0< m$, then $G'_{i_0}:= G_{i_0}\cup \{\min G_{i_0+1}\} \in \Sc_\beta$ by Lemma \ref{max}. Similarly, Lemma \ref{max} implies that $G'_{i} := (G_{i}\minus \{\min G_{i}\})\cup \{\min G_{i+1}\} \in \Sc_\beta$ for all $i_0<i<m$ , and that $G'_m := (G_{m}\minus \{\min G_{m}\})\cup \{l\} \in \Sc_\beta$, for any $l> \max G$. So taking $l>\max G$, we see that $G\cup\{ l\}=\bigcup_{1 \leqslant  i<i_0}G_i \cup \bigcup_{i=i_0}^m G'_i \in \Sc_\alpha$, because $m=\min G_1$ and $G_i, G'_i \in \Sc_\beta$, which contradicts that fact that $G \in S_\alpha^{MAX}$. 
\end{proof}

\subsubsection{Successor ordinals} In this section, we fix a successor ordinal $\alpha=\beta+1$, and we show that every isometry of $X_{\Sc_\alpha, 2}$ is diagonal.

\medskip Let $T:X_{\Sc_\alpha,2}\to X_{\Sc_\alpha,2}$ be an isometry. For every $i\in\N$, we set 
\[ Te_i=: f_i\qquad{\rm and}\qquad T^{-1}e_i=: d_i.\]

\medskip
 \begin{step1}
For every $i \in \N$, the vectors $d_i$ and $f_i $ belong to $c_{00}$.
 \end{step1}
 \begin{proof} 
Theorem \ref{extSc} implies that $e_1 \pm e_i \in \mathrm{Ext}(B_{X_{\Sc_\alpha,2}})$ for all $i \in \N \minus \{1\}$. Since $T$ preserves extreme points, $f_1+f_i, f_1-f_i, d_1+d_i, d_1-d_i \in  \mathrm{Ext}(B_{X_{\Sc_\alpha,2}}) \subset c_{00}$ (by Theorem \ref{extSc}). Hence, $f_i, d_i \in c_{00}$ for all $i \in \N$.
\end{proof}
\begin{step2}
We have $f_1=\omega_1 e_1$ for some sign $\omega_1$.
\end{step2}
\begin{proof}
Since $\Vert f_1\Vert=\Vert e_1\Vert=1$, it is enough to show that ${\rm supp}(f_1)=\{ 1\}$. 
Towards a contradiction, suppose that $k:= \max \mathrm{supp} (f_1) >1$ (this maximum exists because $f_1 \in c_{00}$ by Step 1). \\
Let $l >k$. Since $\{k,l\} \in \Sc_{\alpha}$ and $l \notin \mathrm{supp}f_1$, we have for any sign $\omega$:
\[\Vert e_1 +\omega d_l\Vert ^2 = \Vert f_1 +\omega e_l \Vert ^2 \geqslant    \vert f_1(k )\vert^2 +1>1.  \]
So, for any sign $\omega$, one can find $F_\omega \in \Sc_\alpha$ such that $\sum_{i \in F_\omega} \vert e_1(i)+ \omega d_l(i)\vert^2 >1$. Since $\Vert d_l\Vert=1$, we must have $1 \in F_\omega$, and hence $F_\omega =\{1\}$ because $\{ 1\}\in\Sc_\alpha^{MAX}$. So we have $\vert 1+\omega d_l(1)\vert^2>1$ for every sign $\omega$, which contradicts the fact that $\vert d_l(1)\vert \leqslant  1$.
\end{proof}
\begin{step3}
For every $k\in \N\minus \{1\}$, we have $d_k(1)=f_k(1)=0$. 
\end{step3}
\begin{proof} By
Step 2, we know that $f_1=\omega_1 e_1$. Since $\{ 1, k\}\notin\Sc_\alpha$, we then have for any sign~$\varepsilon$: 
\[  1= \Vert e_1+\varepsilon e_k \Vert = \Vert \omega_1 e_1 + \varepsilon f_k \Vert \geqslant    \vert \omega_1+ \varepsilon f_k(1)\vert.\]
Taking $\varepsilon$ such that $\vert \omega_1+ \varepsilon f_k(1)\vert =1+\vert f_k(1)\vert$, it follows that  $f_k(1)=0$. One shows in the same way that $d_k(1)=0$. 
\end{proof}
\begin{step4}
 Let $n,m \in \N\minus \{1\}$. If $f_n(m)\neq 0$, then $d_m(n)\neq 0$. 
\end{step4}
\begin{proof}
By Theorem \ref{extSc}, we know that $\varepsilon e_1+e_n\in \mathrm{Ext}(B_{X_{\Sc_\alpha,2}})$ for any sign $\varepsilon $. Since $e_1+f_n= \overline{\omega_1}\, f_1+f_n=T(\overline{\omega_1}\,e_1+e_n)$, it follows that $e_1+ f_n \in \mathrm{Ext}(B_{X_{\Sc_\alpha,2}})$. Hence, by Theorem~\ref{extSc} again and since $m \in \mathrm{supp}f_n$,  there exists $F \in \Sc_\alpha$ such that $\sum_{k \in F}\vert e_1(k)+f_n(k)\vert^2=1$ and $m \in F$. Then $F\neq\{ 1\}$ since $m\neq 1$, so $1\notin F$ because $\{ 1\}\in \Sc_\alpha^{MAX}$; and consequently  $\sum_{k \in F}\vert f_n(k)\vert ^2=1.$

\smallskip
Let $\varepsilon$ be a sign such that ${\rm Re}(\overline\varepsilon {f_n(m)})>0$. We have 
\[ \begin{aligned}
    \Vert f_n + \varepsilon e_m \Vert ^2 & \geqslant    \sum_{k \in F} \vert f_n(k)+ \varepsilon e_m(k)\vert ^2 \\
    &= \sum_{k \in F \minus \{m\}} \vert f_n(k)\vert ^2 + \vert f_n(m)+\varepsilon\vert^2\\
    &= \sum_{k \in F \minus \{m\}} \vert f_n(k)\vert ^2 + \vert f_n(m)\vert^2+1+2  \,{\rm Re}(\overline\varepsilon {f_n(m)}) \\
    &=\sum_{k\in F}\vert f_n(k) \vert^2+1 +2 \,{\rm Re}(\overline\varepsilon {f_n(m)}) \\
    &=2+2  \,{\rm Re}(\overline\varepsilon {f_n(m)}) >2. 
\end{aligned}\]

Hence $\Vert f_n + \varepsilon e_m\Vert ^2 >2$ and so \[ \Vert e_n + \varepsilon d_m \Vert^2>2.\]

\smallskip
Now, towards a contradiction, suppose that $n \notin \mathrm{supp}(d_m)$. Then, for any $G \in \Sc_\alpha$, we have
\[ \sum_{k\in G}\vert e_n(k)+ \varepsilon d_m(k)\vert^2=1+ \sum_{k \in G\minus \{n\}}\vert d_m(k)\vert^2\leqslant  2\qquad\hbox{if $n\in G$}\] 
and 
\[ \sum_{k \in G}\vert e_n(k)+ \varepsilon d_m(k)\vert^2=\sum_{k\in G}\vert d_m(k)\vert ^2 \leqslant  1\qquad\hbox{if $n\notin G$}.\]

Therefore, $\Vert e_n + \varepsilon d_m \Vert^2\leqslant  2$, which is the required contradiction. 
\end{proof}
\begin{step5}
For every $k \geqslant  2$, there exists $n >k$ such that $k<\mathrm{supp}(f_n)$. 
\end{step5}
\begin{proof} Let $k \geqslant  2 $. Towards a contradiction, suppose that for all $n >k$, $\mathrm{supp}f_n \cap \{2, \dots ,k\}\neq\emptyset $ (recall that $f_n(1)=0$ by Step 3). This implies that there exist $a \in \{2, \dots ,k\}$ and infinitely many $n>k$ such that $f_n(a)\neq 0$. It follows from Step 4 that $d_a(n) \neq 0$ for infinitely many $n$, which is a contradiction since $d_a \in c_{00}$ by Step 1. 
\end{proof}

\begin{step6}
For every $k \geqslant  2$, $\mathrm{supp}(d_k) \leqslant  k$. 
\end{step6}
\begin{proof} 
Let $k_1:= \max\mathrm{supp}(d_k) +1$ (hence $k_1 >2 $ because $d_k(1)=0$ by Step 3). By Step 5, we can fix $n_1 > k_1$ such that $\mathrm{supp}f_{n_1}>k_1$. Next, let $k_2 := \max\mathrm{supp}(f_{n_1})$ and fix $n_2 > \max \{k_2,n_1\}$ such that $\mathrm{supp}(f_{n_2})>\max \{k_2,n_1\}$.  
Continuing in this way, we obtain two strictly increasing sequences $(k_j)_{j=1}^\infty$ and $(n_j)_{j=1}^\infty$ such that for every $j \geqslant   2$, $k_j = \max\mathrm{supp}(f_{n_{j-1}})$, $n_j> \max \{k_j,n_{j-1}\}$ and $\mathrm{supp}(f_{n_j})> \max \{k_j,n_{j-1}\}$. 

\smallskip Recall that $\alpha$ is a successor ordinal,  $\alpha=\beta+1$.

\smallskip
Since $k_1>2$ (hence $\{k_1 \}\in \Sc_\beta\minus \Sc_\beta^{MAX}$), Lemma \ref{max}, together with the fact that $\Sc_\beta$ is a compact family of finite subsets of $\N$, implies that there exists $r_1 >1$ such that $F_1:= \{k_j; \; j=1, \dots, r_1\}\in \Sc_\beta^{MAX}$. 
Let $F'_1:= \{n_j; \; j=1, \dots, r_1\}$. Note that $F'_1 \in \Sc_\beta$ because the family $\Sc_\beta$ is spreading. \\
In the same way, for every $i \in \{2, \dots , k+1\}$, there exists $r_i>r_{i-1}+1$ such that $F_i:= \{k_j; \; j=r_{i-1}+1, \dots, r_i\}\in \Sc_\beta^{MAX}$,  and then $F'_i  := \{n_j; \; j=r_{i-1}+1, \dots, r_i\}\in \Sc_\beta$. \\
So, we have $F_1< \dots < F_k < F_{k+1}$, $F'_1< \dots < F'_k < F'_{k+1}$ and $\vert F_i\vert = \vert F'_i\vert$, for all $i \in \{1,\dots, k+1\}$.

\smallskip Now, towards a contradiction, assume that $\max \mathrm{supp}(d_k) \geqslant   k+1$.  

\smallskip 
Since $T^{-1}(e_k + \sum_{i=1}^k\sum_{n \in F'_i} f_n)= d_k+ \sum_{i=1}^k\sum_{n \in F'_i} e_n$, we have
\[ \Bigl\Vert d_k+ \sum_{i=1}^k\sum_{n \in F'_i} e_n \Bigr\Vert = \Bigl\Vert e_k + \sum_{i=1}^k\sum_{n \in F'_i} f_n \Bigr\Vert.  \]
So, we will get the required contradiction if we are able to show that on the one hand
\begin{align}\label{1}
    \Bigl\Vert d_k+ \sum_{i=1}^k\sum_{n \in F'_i} e_n \Bigr\Vert ^2 > \sum_{i=1}^k \vert F'_i \vert,
\end{align}
and on the other hand
\begin{align}\label{2}
   \Bigl\Vert e_k + \sum_{i=1}^k\sum_{n \in F'_i} f_n \Bigr\Vert ^2 \leqslant   \sum_{i=1}^k \vert F'_i \vert. 
\end{align}

Let us first prove (\ref{1}). \\
Since $k_1=\max \mathrm{supp}(d_k)+1  > k+1$, we have $\min (\{k+1\}\cup \bigcup_{i=1}^k F_i)=k+1$ and hence $\{k+1\}\cup \bigcup_{i=1}^k F_i \in \Sc_\alpha$. So, choosing any 
 $m \in \mathrm{supp}(d_k)$ such that $m \geqslant   k+1$, we see that $\{m\}\cup \bigcup_{i=1}^k F'_i \in \Sc_{\alpha}$ because  $\Sc_\alpha$ is spreading. 
Since $d_k(m)\neq 0$  and $m \notin \bigcup_{i=1}^k F'_i$ (because $\min(\bigcup_{i=1}^k F'_i)=n_1>k_1>m$), it follows that 
\[  \Bigl\Vert d_k+ \sum_{i=1}^k\sum_{n \in F'_i} e_n \Bigr\Vert ^2 \geqslant   \vert d_k(m)\vert ^2 + \sum_{i=1}^k\vert F'_i\vert > \sum_{i=1}^k\vert F'_i\vert.   \]

Let us now prove (\ref{2}). \\
Fix $G \in \Sc_\alpha$, and assume without loss of generality that $G \in \Sc_\alpha^{MAX}$. By Lemma \ref{maxsucc}, we may write $G = \bigcup_{i=1}^m G_i$ where $G_1< \dots< G_m \in \Sc_\beta^{MAX}$ and $\min G_1 =m$. \\
Note that if $k \notin G$ or $G \cap \mathrm{supp}(f_n) = \emptyset $ for some $n \in \bigcup_{i=1}^k F'_i$, then 
\[ \sum_{t \in G} \Bigl\vert e_k(t)+\sum_{i=1}^k\sum_{n \in F'_i} f_n(t)\Bigr\vert ^2 \leqslant  \sum_{i=1}^k \vert F'_i\vert, \]
because $k \notin \bigcup_{i=1}^k\bigcup_{j\in F'_i} \mathrm{supp}(f_j)$ and the $f_n$,  
$ n\in \bigcup_{i=1}^k F'_i$ are disjointly supported.\\
So, let us assume that  
\begin{align}\label{assumption} k \in G \qquad \hbox{and} \qquad G \cap \mathrm{supp}(f_n) \neq \emptyset \quad\hbox{for all} \; n \in \bigcup_{i =1}^k F'_i. \end{align}
Since $k \in G $, it follows that $\min G \leqslant  k$ and so $m=\min G_1 \leqslant  k$. 

\smallskip\noindent We claim that there exists $n^1\in F'_1=\{ n_1, \dots ,n_{r_1}\}$ such that $G_1 \cap \mathrm{supp}(f_{n^1})=\emptyset$. Indeed, otherwise we may choose $s_i\in G_1 \cap \mathrm{supp}(f_{n_i})$ for $i=1,\dots ,r_1$. Note that $k_i < s_i \leqslant  k_{i+1}$ for all $i\in\{ 1,\dots ,r_1\}$. Since $\{k_1, \dots, k_{r_1}, k_{r_1+1}\}\notin \Sc_\beta$ (because $\{k_1, \dots, k_{r_1}\}=F_1\in \Sc_\beta^{MAX}$) and since $\Sc_\beta$ is spreading, it follows that $\{\min G_1, s_1, \dots,s_{r_1} \} \notin \Sc_\beta$. However, $\{\min G_1, s_1, \dots,s_{r_1} \} \subset G_1 \in \Sc_\beta$, hence  $\{\min G_1, s_1, \dots,s_{r_1} \} \in \Sc_\beta$, which is a contradiction. This proves our claim. 
 Note that since $G \cap \mathrm{supp}f_{n^1}\neq \emptyset$ by (\ref{assumption}), we must have $\min G_2 \leqslant  \max\mathrm{supp}f_{n^1}$. Hence $\min G_2\leqslant    \max\mathrm{supp}f_{n_{r_1}}=k_{r_1+1}= \min F_2$.\\
One shows in the same way that for every $i \in \{1,\dots , m\}$, there exists $n^i \in F'_i$ such that $G_i \cap \mathrm{supp}f_{n^i}=\emptyset$, and $\min G_i\leqslant  \min F_i$. In particular, $\min G_m\leqslant  \min F_m$ and 
\begin{align}\label{res1}
\exists \; n^m\in F'_m \qquad \hbox{such that} \qquad G_m \cap \mathrm{supp}(f_{n^m})= \emptyset.
\end{align}
Since $\min F_m<\min \mathrm{supp}(f_{n^m})$ (by construction), it follows that $\min G_m <\min \mathrm{supp}(f_{n^m})$ and this implies that 
\begin{align}\label{res2}
    \forall \; 1 \leqslant  i <m  \qquad  G_i \cap \mathrm{supp}(f_{n^m})=\emptyset 
\end{align}
because $G_i<G_m$. 
So we have found $n^m \in F'_m \subset \bigcup_{i =1}^k F'_i$ (because $m\leqslant  k)$ such that $G \cap \mathrm{supp}f_{n^m}=\emptyset $ which contradicts (\ref{assumption}). \\
Therefore, there is in fact no $G \in \Sc_\alpha^{MAX}$ that verifies (\ref{assumption}). So, we have proved (\ref{2}), and altogether ${\rm supp}(d_k)\leqslant  k$.
  \end{proof}

\begin{step7} We have 
  $f_2=\omega_2 e_2$ for some sign $\omega_2$.     
\end{step7}
\begin{proof} By Step 6 and Step 3, we see that  $\mathrm{supp}(d_2)=\{2\}$.
\end{proof}

\begin{step8}
  For every $k\in \N \minus \{1\}$, we have $f_k=\omega_k e_k$ for some sign $\omega_k$.   
\end{step8}
\begin{proof} We prove that by induction. The statement holds true for $k=2$ by Step 7. Fix $k\geqslant   3$, and assume that it holds true for every $2\leqslant  j<k$; we will prove it for $k$, \textit{i.e.} we show that ${\rm supp}(d_k)=\{ k\}$.

\smallskip By Step 3 and Step 6, we already know that $d_k(1)=0$ and ${\rm supp}(d_k)\leqslant  k$. Towards a contradiction, assume that there exists $2\leqslant  j<k$ such that $d_k(j)\neq 0$.

\smallskip 
Since $j \in \mathrm{supp}(d_k)$ and $e_1+d_k =T^{-1}({\omega_1} e_1+e_k)\in \mathrm{Ext}(B_{X_{\Sc_\alpha,2}})$, Theorem \ref{extSc} implies that there exists $F \in \Sc_\alpha$ such that $j \in F$ and $\sum_{i \in F}\vert e_1(i)+ d_k(i)\vert^2=1$. Then $F \neq \{1\}$; hence $1\notin F$ because $\{ 1\}\in\Sc_\alpha^{MAX}$, so that $\sum_{i \in F}\vert  d_k(i)\vert^2=1$.\\ 
Let $\varepsilon$ be a sign such that $\mathrm{Re}(\varepsilon d_k(j))>0$. Since $\{j,k\}\in \Sc_\alpha$ (because $k>j\geqslant   2$) and $T^{-1}({\omega_j} e_j)=e_j$ (by the inductive hypothesis), we have $\Vert e_j+\varepsilon d_k\Vert^2=\Vert \omega_j e_j +\varepsilon e_k\Vert^2=2$. \\
However, \[\begin{aligned}
    \Vert e_j+\varepsilon d_k\Vert^2 &\geqslant   \sum_{i \in F} \vert e_j(i)+ \varepsilon d_k(i)\vert^2\\
    &= \vert 1+ \varepsilon d_k(j) \vert ^2 + \sum_{i \in F\minus \{j\}} \vert  d_k(i)\vert^2\\
    &=1+ \vert d_k(j)\vert ^2+ 2 \,{\rm Re}(\varepsilon d_k(j))+ \sum_{i \in F\minus \{j\}} \vert  d_k(i)\vert^2 \\
    &=1+ \sum_{i \in F} \vert  d_k(i)\vert^2 + 2 \,{\rm Re}(\varepsilon d_k(j)) >2,
    \end{aligned}\]
which is the required contradiction. 
\end{proof}  

\subsubsection{Limit ordinals} In this section, we fix a limit ordinal $\alpha<\omega_1$, and we prove that any isometry of $T: X_{\Sc_\alpha, 2}\to X_{\Sc_\alpha, 2}$ is diagonal.

\medskip Recall that, in accordance with the statement of Proposition \ref{restated}, we assume that the approximating sequence $(\alpha_n)$ for $\alpha$ has been chosen in such a way that the $\alpha_n$ are successor ordinals, $\alpha_n=\beta_n+1$, and that the sequence 
$(\Sc_{\beta_n})_{n\in\N}$ is increasing.

\medskip
Looking back at the proof done in the successor case, we see that the only place where we used that $\alpha$ was a successor ordinal is in Step 6. So, if we can prove Step 6 in our present setting, we get the desired result. In what follows, we keep the notation used in the proof of the successor case; so we have to show that $\mathrm{supp}(d_k) \leqslant  k$, for every $k\geqslant   2$.

\medskip
First, we construct two strictly increasing sequences $(k_j)_{j=1}^\infty$ and $(n_j)_{j=1}^\infty$ in the same way as in the proof of Step 6.

\smallskip Then, we continue as in Step 6 but replacing $\beta$ with $\beta_k$. In other words, since $k_1>2$, Lemma \ref{max} implies that there exists $r_1 >1$ such that $F_1:= \{k_j; \; j=1, \dots, r_1\}\in \Sc_{\beta_k}^{MAX}$, and we set $F_1':= \{n_j; \; j=1, \dots, r_1\}$. Note that $F_1' \in \Sc_{\beta_k}$ because $\Sc_{\beta_k}$ is spreading. \\
In the same way, for every $i \in \{2, \dots , k+1\}$, there exists $r_i>r_{i-1}+1$ such that $F_i:= \{k_j; \; j=r_{i-1}+1, \dots, r_i\}\in \Sc_{\beta_k}^{MAX}$, and then $F'_i  := \{n_j; \; j=r_{i-1}+1, \dots, r_i\}\in \Sc_{\beta_k}$. 

\smallskip
Now, we show, that if $\max \mathrm{supp}(d_k) \geqslant   k+1$, then we have (\ref{1}) and (\ref{2}) and hence a contradiction (see the proof of Step 6). 

Let us first prove (\ref{1}). Since $k_1 > k+1$ (because we are assuming that $\max \mathrm{supp}(d_k) >k$), we have $\min (\{k+1\}\cup \bigcup_{i=1}^k F_i)=k+1$ and hence $\{k+1\}\cup \bigcup_{i=1}^k F_i \in \Sc_{\beta_k+1}=\Sc_{\alpha_k}$. Therefore,  $\{k+1\}\cup \bigcup_{i=1}^k F_i \in \Sc_\alpha$, since $ k+1\leqslant   \{k+1\}\cup \bigcup_{i=1}^k F_i$.  So, if we choose $m \in \mathrm{supp}(d_k)$  such that $m \geqslant   k+1$, then $\{m\}\cup \bigcup_{i=1}^k F'_i \in \Sc_{\alpha}$ because  $\Sc_\alpha$ is spreading. 
Since $d_k(m)\neq 0$ and $m \notin \bigcup_{i=1}^k F'_i$ (because $\min(\bigcup_{i=1}^k F'_i)=n_1>k_1>m$), it follows that 
\[  \Bigl\Vert d_k+ \sum_{i=1}^k\sum_{n \in F'_i} e_n \Bigr\Vert ^2 \geqslant   \vert d_k(m)\vert ^2 + \sum_{i=1}^k\vert F'_i\vert > \sum_{i=1}^k\vert F'_i\vert.   \] 
So, we have shown (\ref{1}). 

Finally, let us prove (\ref{2}). \\
Fix a set $G \in \Sc_\alpha^{MAX}$. 
If $k \notin G$ or $G \cap \mathrm{supp}f_n = \emptyset $ for some $n \in \bigcup_{i=1}^k F'_i$, then 
\[ \sum_{t \in G} \Bigl\vert e_k(t)+\sum_{i=1}^k\sum_{n \in F'_i} f_n(t)\Bigr\vert ^2 \leqslant  \sum_{i=1}^k \vert F'_i\vert. \]
So, let us assume that $G$ verifies (\ref{assumption}).\\
Since $k\in G$, there exists $n \leqslant  \min G \leqslant  k$ such that $G \in \Sc_{\alpha_n}$.  
Note that $G \in \Sc_{\alpha_n}^{MAX}$. Indeed, if $G \in \Sc_{\alpha_n}\minus \Sc_{\alpha_n}^{MAX}$, then Lemma \ref{max} implies that  $G \cup \{l\} \in \Sc_{\alpha_n}$, for each $l>G$. Since $ n \leqslant  \min G= \min(G\cup\{l\})$, it follows that $G\cup\{l\} \in \Sc_\alpha$, which contradicts the fact that $G \in \Sc_\alpha^{MAX}$.\\
Since $G \in \Sc_{\alpha_n}^{MAX}$, where $\alpha_n = \beta_n+1$, Lemma \ref{maxsucc} implies that $G = \bigcup_{i=1}^m G_i$ where $G_1< \dots< G_m \in \Sc_{\beta_n}^{MAX}$ and $\min G_1 =m$ (hence $m\leqslant  k$ because $k\in G$).\\
Assume that  $G_1 \cap \mathrm{supp}f_n \neq \emptyset$, for all $n \in F'_1= \{n_1, \dots ,n_{r_1}\}$. Then, for $i=1,\dots ,r_1$, one can find $s_i \in G_1$ such that $k_i < s_i \leqslant  k_{i+1}$. 
Since $ \{k_1, \dots, k_{r_1}\}=F_1\in \Sc_{\beta_k}^{MAX}$, it follows that $\{k_1, \dots, k_{r_1}, k_{r_1+1}\}\notin \Sc_{\beta_k}$, which implies that $\{\min G_1, s_1, \dots,s_{r_1} \} \notin \Sc_{\beta_k}$ because $\Sc_{\beta_k}$ is spreading. Since $\Sc_{\beta_n}\subset \Sc_{\beta_k}$ (because $n \leqslant  k$), we get $\{\min G_1, s_1, \dots,s_{r_1} \} \notin \Sc_{\beta_n}$. However, since $\{\min G_1, s_1, \dots,s_{r_1} \} \subset G_1 \in \Sc_{\beta_n}$, we do have  $\{\min G_1, s_1, \dots,s_{r_1}\} \in \Sc_{\beta_n}$, which is a contradiction. \\
Therefore, there exists $n^1\in F'_1$ such that $G_1 \cap \mathrm{supp}f_{n^1}=\emptyset$; and as above this implies that $\min G_2 \leqslant   \min F_2$.\\
Continuing in this way, we see that for every $j \in \{1,\dots , m\}$, we have $\min G_j\leqslant  \min F_j$ and there exists $n^j \in F'_j$ such that $G_j \cap \mathrm{supp}f_{n^j}=\emptyset$. In particular, we have (\ref{res1}) and the rest of the proof is exactly the same as  in the successor case. 

\section{More about  the complex case}\label{more} In this section, we use \cite{KW}  to obtain, in the complex case, a characterization of the families $\F$ such that all isometries of $X_{\F,2}$ are signed permutations of the canonical basis (Proposition \ref{carac}), and a version of Theorem \ref{nonsense} where the assumption that the sequence space $\mathbf E$ is ``far from being Hilbertian'' has been removed (Proposition \ref{encore}).

\begin{proposition}\label{carac} Assume that $\K=\mathbb C$, and let $\mathcal F$ be a combinatorial family. The following are equivalent. 
\begin{enumerate}
\item[\rm (1)] Any isometry of $X_{\F,2}$ is a signed permutation of $(e_i)$.
\item[\rm (2)] For any $j\neq k$ in $\N$, 
one can find a set $F\in \mathcal F$ such that $F\cap \{ j,k\}\neq \emptyset$ and $\{ j,k\}\cup F\notin \F$.
\end{enumerate}
\end{proposition}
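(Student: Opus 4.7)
My plan is to establish the two directions of the equivalence separately. The implication $(1)\Rightarrow(2)$ is proved directly by constructing a rotation isometry when $(2)$ fails; the reverse implication $(2)\Rightarrow(1)$ uses the Kalton--Wood decomposition of $X_{\F,2}$ into Hilbert components.

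For $(1)\Rightarrow(2)$, I argue by contrapositive. Suppose $(2)$ fails for some $j\neq k$. Then, taking $F=\{j\}$, one sees that $\{j,k\}\in\F$, and more generally, for every $F\in\F$ with $F\cap\{j,k\}\neq\emptyset$ one has $F\cup\{j,k\}\in\F$. Fix $\theta\in\R$ with $\sin 2\theta\neq 0$ and let $T_\theta:X_{\F,2}\to X_{\F,2}$ rotate the pair $(e_j,e_k)$ through $\theta$ and fix every other basis vector. For each $x\in X_{\F,2}$ and each $F\in\F$, the sum $\sum_{i\in F}|(T_\theta x)_i|^2$ equals $\sum_{i\in F}|x_i|^2$ when $|F\cap\{j,k\}|\in\{0,2\}$ (a planar rotation preserves the sum of squares on the rotated pair), and is otherwise bounded by $\sum_{i\in F\cup\{j,k\}}|x_i|^2$. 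Under the failure of $(2)$, the set $F\cup\{j,k\}$ lies in $\F$ in the remaining case, hence this sum is $\leq\|x\|^2$. So $\|T_\theta x\|\leq\|x\|$, and applying the same estimate to $T_{-\theta}$ gives equality. Thus $T_\theta$ is an isometry that sends $e_j$ to $\cos\theta\,e_j+\sin\theta\,e_k$, which is not a sign times a single basis vector, so $(1)$ fails.

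For $(2)\Rightarrow(1)$, I invoke the Kalton--Wood framework recalled in the proof of Theorem \ref{nonsense}. Let $(H_\lambda)_{\lambda\in\Lambda}$ be the Hilbert components of $Z:=X_{\F,2}$, with associated partition $(I_\lambda)$ of $\N$. Any isometry $J$ of $Z$ permutes the $H_\lambda$; consequently, if each $I_\lambda$ is a singleton, then $J$ maps each $\K e_j$ onto some $\K e_{\pi(j)}$, forcing $Je_j=\omega_j e_{\pi(j)}$. Hence it suffices to show that, under $(2)$, every Hilbert component is one-dimensional. Suppose toward a contradiction that $|I_\lambda|\geq 2$, and pick $j\neq k$ in $I_\lambda$. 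The Kalton--Wood theory, beyond the three bullet points quoted in the excerpt, tells us that every unitary of $H_\lambda$ extends to an isometry of $Z$ acting as the identity on each other Hilbert component: hermitian operators on a maximal hermitian subspace lift to hermitian operators on $Z$ vanishing on the other components, and their exponentials furnish the desired isometries. Applied to the planar rotation of $(e_j,e_k)$ inside $H_\lambda$ (with identity on the remaining $e_i\in I_\lambda$), this produces precisely the operator $T_\theta$ from the first paragraph as an isometry of $Z$. But if $(2)$ held for $(j,k)$, witnessed by some $F\in\F$ with $F\cap\{j,k\}\neq\emptyset$ and $\{j,k\}\cup F\notin\F$, then the test vector $x=-\sin\theta\,e_j+\cos\theta\,e_k+\sum_{i\in F\setminus\{j,k\}}e_i$ (say with $\theta=\pi/4$) would satisfy $\|T_\theta x\|>\|x\|$, contradicting the fact that $T_\theta$ is an isometry. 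Hence $(2)$ must fail for $(j,k)$, contradicting our hypothesis.

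The main obstacle is the unitary-extension statement used in the second paragraph; it is what converts the local rigidity of $(2)$ into global rigidity of the Hilbert components, and it is the piece of Kalton--Wood theory that is not explicitly included in the summary reproduced in the excerpt. I would need to pull it in directly from \cite{KW} (via the hermitian-operator lifting that underlies the ``maximal hermitian subspace'' language) to complete the argument.
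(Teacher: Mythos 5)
Your proof is correct and follows essentially the same route as the paper: the forward direction amounts to the paper's Lemma \ref{isom} (a rotation in the $(e_j,e_k)$-plane is an isometry when Condition $(*)_{j,k}$ fails), and the converse reduces via the Kalton--Wood Hilbert-component decomposition to showing that no two basis vectors lie in a common component, which you rule out with the same kind of test-vector computation as the paper's Lemma \ref{notisom}. The unitary-extension statement you flag as the missing ingredient is precisely the paper's Lemma \ref{hermi}, which is proved there from the hermitian-projection calculus of \cite{KW}, so your argument is complete once that lemma is imported.
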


\begin{rem1} If the family $\F$ is compact, so that every set $F\in\F$ is contained in a set $G\in\F^{MAX}$, then, given $j,k\in\N$, the existence of a set $F$ as in (2) is equivalent to the existence of a set $G\in\F^{MAX}$such that $G$ contains $j$ or $k$ but not both. This property of the family $\F$ has been isolated by C. Brech and A. Tcaciuc \cite{AC}. As mentioned in the introduction, Proposition \ref{carac} is due to them, and it is valid in the real case as well. However, the proof we give here in the complex case looks rather different. 
\end{rem1}

\begin{rem2} For the sake of conciseness, given $j,k\in\N$, we will say that the family \emph{$\F$ satisfies Condition} $(*)_{j,k}$ if there exists a set $F\in \mathcal F$ such that $F\cap \{ j,k\}\neq \emptyset$ and $\{ j,k\}\cup F\notin \F$.
\end{rem2}

\smallskip For the proof of Proposition \ref{carac}, we need to recall some definitions from \cite{KW}. Let $Z$ be a complex Banach space. An operator $A$ on $Z$ is said to be \emph{hermitian} if $e^{it A}$ is an isometry for every $t\in\R$. A \emph{vector} $u\in Z$ is said to be hermitian if there is a hermitian projection $P$ on $Z$ such that ${\rm Ran}(P)=\mathbb C u$; and a \emph{linear subspace} $E\subset Z$ is hermitian if all vectors $u\in E$ are hermitian. Obviously, if $Z$ is a Hilbert space then $Z$ is a hermitian subspace of itself. The converse is true by \cite{KW}; in fact, any hermitian (closed) subspace of $Z$ is hilbertian.

\smallskip The following lemma is essentially contained in \cite{KW}, but we give a proof for completeness.
\begin{lemma}\label{hermi} Let $Z$ be complex Banach space with a normalized $1$-unconditional basis $(f_i)_{i\in\N}$, and let $j\neq k$ in $\N$. Let $Z_{j,k}:={\rm span}(f_j,f_k)$, and denote by $P_{j,k}$ the canonical projection of $Z$ onto $Z_{j,k}$. Also, let any $2\times 2$ matrix $M$ act on 
$Z_{j,k}$ by identifying $Z_{j,k}$ with $\mathbb C^2$ in the obvious way. The following are equivalent.
\begin{enumerate}
\item[\rm (i)] $Z_{j,k}$ is a hermitian subspace of $Z$.
\item[\rm (ii)] For any unitary $2\times 2$ matrix $U$, the operator $V:=UP_{j,k} +(I-P_{j,k})$ is an isometry of $Z$.
\end{enumerate}
\end{lemma}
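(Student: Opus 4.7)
The plan is to prove the two implications separately, both by leveraging Kalton-Wood theory.

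For (ii) $\Rightarrow$ (i), the idea is to manufacture, for every nonzero $u \in Z_{j,k}$, a hermitian projection on $Z$ with range $\mathbb{C}u$. Given any self-adjoint $2 \times 2$ matrix $H$ acting on $Z_{j,k}$, the matrix $U_t := e^{itH}$ is unitary, so by (ii) the operator $V_t := U_t P_{j,k} + (I - P_{j,k})$ is an isometry of $Z$ for every $t \in \mathbb{R}$. Setting $A := HP_{j,k}$ and using $P_{j,k}^2 = P_{j,k}$ together with $P_{j,k} H = H$ (which holds since $H$ takes values in $Z_{j,k}$), a direct power-series computation gives $V_t = e^{itA}$, whence $A$ is hermitian on $Z$. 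Specialising to a self-adjoint rank-one $2\times 2$ matrix $\tilde P$ with range $\mathbb{C}u \subseteq Z_{j,k}$, one gets that $\tilde P P_{j,k}$ is hermitian on $Z$; the identity $P_{j,k} \tilde P = \tilde P$ further yields $(\tilde P P_{j,k})^2 = \tilde P P_{j,k}$, so this is a hermitian projection with range $\mathbb{C}u$, proving that every nonzero $u \in Z_{j,k}$ is a hermitian vector.

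For (i) $\Rightarrow$ (ii), the plan is to extend any unitary $U$ on $Z_{j,k}$ to an isometry of the whole space by lifting it first to the ambient Hilbert component. By \cite{KW}, the hermitian subspace $Z_{j,k}$ is contained in some maximal hermitian subspace $H_\lambda$, which is isometrically a Hilbert space. Since $(f_i)_{i \in I_\lambda}$ is a normalized $1$-unconditional basis of the Hilbert space $H_\lambda$, the usual trick of expanding $\|f_i \pm \varepsilon f_k\|^2$ for varying signs $\varepsilon$ forces it to be orthonormal, so that the Hilbert orthogonal complement of $Z_{j,k}$ inside $H_\lambda$ coincides with $H_\lambda \cap \ker P_{j,k}$. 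Extending $U$ by the identity on this complement yields a unitary $\tilde U$ on $H_\lambda$; the Kalton-Wood structure theorem then gives that $W := \tilde U P_\lambda + (I - P_\lambda)$ is an isometry of $Z$. A component-by-component comparison over $Z_{j,k}$, $H_\lambda \cap \ker P_{j,k}$ and $H_\mu$ ($\mu \neq \lambda$) shows that $W = U P_{j,k} + (I - P_{j,k})$, proving (ii).

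The main obstacle is the (i) $\Rightarrow$ (ii) direction: it uses a feature of Kalton-Wood theory not explicitly spelled out in the earlier summary of \cite{KW} in the paper, namely that every unitary on a single Hilbert component $H_\lambda$ lifts, by acting as identity on the remaining components, to an isometry of $Z$. One must either quote this lift directly from \cite{KW} or derive it as a consequence of the fact that the hermitian operators on $Z$ decompose along the partition $(H_\lambda)$, so that any operator of the form $\tilde U P_\lambda + (I - P_\lambda)$ with $\tilde U$ unitary on $H_\lambda$ arises as the exponential of a hermitian operator on $Z$.
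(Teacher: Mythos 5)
Your proof is correct, and the two halves compare differently with the paper's argument. For (ii) $\Rightarrow$ (i) you do essentially what the paper does: the paper takes $P:=P_gP_{j,k}$ with $P_g$ the orthogonal projection of $Z_{j,k}\cong\ell_2(2)$ onto $\mathbb C g$ and checks $e^{itP}=V$ for $U=e^{it}P_g+(I_{j,k}-P_g)$; your version with a general self-adjoint $H$ and $A=HP_{j,k}$ is the same computation, phrased a bit more generally. The real divergence is in (i) $\Rightarrow$ (ii). The paper stays ``local'': it normalizes $U$ to have eigenvalue $1$ (using that diagonal sign operators are isometries), writes $U=\omega P_g+(I_{j,k}-P_g)$ for an orthonormal eigenvector $g$, invokes only the \emph{definition} of hermitian subspace to get some hermitian projection $P$ with range $\mathbb C g$, and then pins down $P=P_gP_{j,k}$ via the symmetry of the semi-inner product on hermitian vectors (\cite[Proposition 3.3]{KW}), so that $V=e^{itP}$ is an isometry. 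You instead route the argument through the global Hilbert-component decomposition: embed $Z_{j,k}$ in its maximal hermitian subspace $H_\lambda$, extend $U$ to a unitary of $H_\lambda$, and lift. The obstacle you flag yourself is genuine but surmountable: the lifting step needs the full Kalton--Wood structure theorem, namely that the norm of $Z$ factors through the component norms (equivalently, that $\tilde U P_\lambda+(I-P_\lambda)$ is an isometry for every unitary $\tilde U$ of $H_\lambda$, or that the hermitian operators are exactly the block self-adjoint ones). This is indeed proved in \cite{KW}, but it is strictly more than the blackbox summary the paper records, so you would have to cite it explicitly; the paper's semi-inner-product argument is chosen precisely to avoid that heavier input. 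Two small points to tidy up if you keep your route: justify that a hermitian $Z_{j,k}$ is contained in one of the listed $H_\lambda$ (this follows since those are \emph{the} maximal hermitian subspaces), and note that the inner product on $Z_{j,k}$ inherited from $H_\lambda$ is the standard one on $\mathbb C^2$ (your orthonormality trick), so that your $\tilde U$ really is unitary on $H_\lambda$.
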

\begin{proof} Assume that $Z_{j,k}\subseteq Z$ is hermitian. Then $Z_{j,k}$ is isometrically equal to $\ell_2(2)$ by~\cite{KW}, \textit{i.e.} $\Vert af_j+bf_k\Vert^2=\vert a\vert^2+\vert b\vert^2$ for any $a,b\in\mathbb C$. Let $U$ be a $2\times 2$ unitary matrix, and let us show that $V:=UP_{j,k} +(I-P_{j,k})$ is an isometry of $Z$. We may assume that $1$ is an eigenvalue of $U$ (otherwise multiply $U$ by a scalar), and that $U$ is not the identity; so it has a second eigenvalue $\omega\neq 1$. Let $(g,h)$ be an orthonormal basis of $Z_{j,k}$ with $Ug=\omega g$ and $Uh=h$. Then, with obvious notation and identifications:
\[ U= \omega P_g +(I_{j,k}-P_g),\]
where $P_g:Z_{j,k}\to Z_{j,k}$ is the orthogonal projection of $Z_{j,k}$ with range $\mathbb C g$.

Since $Z_{j,k}$ is hermitian, there is a hermitian projection $P$ of $Z$ such that ${\rm Ran}(P)=\mathbb C g$. Then $P$ must be $P_g P_{j,k}$. Indeed, denoting by $[\;,\;]$ the semi-scalar product used in \cite{KW}, for any $n\neq j,k$ we have 
$[g,f_n]=\langle f_n^*, g\rangle=0$, and hence $[f_n,g]=0$ by \cite[Proposition~3.3]{KW} because $f_n$ and $g$ are hermitian vectors. So $\langle g^*,f_n\rangle=0$, where $P={g^*\otimes g}$, {\it i.e.} $Pf_n=0$. Since this holds for any $n\neq j,k$, it follows that  $P=PP_{j,k}=P_gP_{j,k}$, the second equality being true because the restriction of $P$ to the Hilbert space $Z_{j,k}$ is a hermitian projection, \textit{i.e.} an orthogonal projection, and so it must be equal to $P_g$. 

Since $P$ is hermitian, $e^{itP}$ is an isometry for every $t\in\mathbb R$. But it is clear that $e^{itP}=V$ if $e^{it}=\omega$. Indeed, since $P^2=P$, we have
\[ e^{itP}= I+ (e^{it}-1) P= e^{it} P+(I-P).\] 
So, if $e^{it}=\omega$, then 
\[ e^{itP}=\omega P_gP_{j,k} + (I_{j,k}-P_g)P_{j,k}+ (I-P_{j,k})=UP_{j,k}+(I-P_{j,k})=V.\]

\smallskip
Conversely, assume that every operator $V$ as in (ii) is an isometry. Then, in particular, every $2\times 2$ unitary matrix $U$ defines an isometry of $Z_{j,k}$. It follows that $Z_{j,k}$ is isometrically equal to $\ell_2(2)$. Indeed, if $a,b\in\mathbb C$ are such that $\vert a\vert^2+\vert b\vert^2=1$, then one can find a unitary matrix $U$ such that $Uf_j=af_j+bf_k$, so $\Vert a f_j+b f_k\Vert =\Vert Uf_j\Vert =1$.

Now, let $g\in Z_{j,k}$; we must show that $\mathbb C g$ is the range of a hermitian projection $P$. We have no choice but setting $P:=P_gP_{j,k}$, where $P_g$ is the orthogonal projection of $Z_{j,k}=\ell_2(2)$ with range $\mathbb C g$. The same computation as above shows that if $t\in\mathbb R$, then $e^{itP}$ is the operator $V$ associated with $U:=e^{it} P_g+ (I_{j,k}-P_g)$; so $P$ is indeed a hermitian projection by (ii). 
\end{proof}

\smallskip In the next two lemmas, $\mathbf E$ is a complex Banach space with a normalized $1$-unconditional basis $(\mathbf e_i)$. If $j,k\in\N$, we set 
\[ \mathbf E_{j,k}:={\rm span}(\mathbf e_j,\mathbf e_k)\subset\mathbf{E}\qquad{\rm and}\qquad E_{j,k}:={\rm span}(e_j,e_k)\subset X_{\F,\mathbf E}.\]

\smallskip
\begin{lemma}\label{notisom} Assume that the basis $(\mathbf e_i)$ is strictly $1$-unconditional, and let $j\neq k$ in $\N$ be such that $\{ j,k\}\in\mathcal F$ and $\mathbf E_{j,k}$ is a hermitian subspace of $\mathbf E$. 
If $\F$ satisfies Condition~$(*)_{j,k}$, 
then $E_{j,k}$ is not a hermitian subspace of $X_{\F,\mathbf E}$.
\end{lemma}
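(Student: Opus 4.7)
The plan is to argue by contradiction, applying Lemma \ref{hermi} in parallel on both $\mathbf E$ and $X_{\F,\mathbf E}$. Assume that $E_{j,k}$ is hermitian in $X_{\F,\mathbf E}$; then Lemma \ref{hermi} supplies, for every $2\times 2$ unitary matrix $U$, an isometry $V := UP_{j,k}+(I-P_{j,k})$ of $X_{\F,\mathbf E}$, where $P_{j,k}$ denotes the canonical projection onto $E_{j,k}$. Since $\mathbf E_{j,k}$ is by hypothesis hermitian in $\mathbf E$, Lemma \ref{hermi} simultaneously produces the analogous isometry $V^{\mathbf E} := UP^{\mathbf E}_{j,k}+(I_{\mathbf E}-P^{\mathbf E}_{j,k})$ of $\mathbf E$. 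A direct coordinate check shows that the bold copy of $Vx$ is exactly $V^{\mathbf E}\mathbf x$, for every $x\in c_{00}$, and I would keep this identification in mind while computing.

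Next I would select the test vector. By Condition $(*)_{j,k}$, pick $F\in\F$ with $F\cap\{j,k\}\neq\emptyset$ and $\{j,k\}\cup F\notin\F$. Since $\{j,k\}\cup F\neq F$ (otherwise $F\notin\F$), exactly one of $j,k$ lies in $F$; by symmetry I assume $j\in F$ and $k\notin F$, so that $F\cup\{k\}\notin\F$. Take $x:=\sum_{i\in F}e_i$: since $F\in\F$, Lemma \ref{superbasic} gives $\|x\|_{\F,\mathbf E}=\|\mathbf x\|_{\mathbf E}$. Choose now any unitary $U$ whose first column is $(\tfrac1{\sqrt 2},\tfrac1{\sqrt 2})$ (for instance the normalized Hadamard matrix), so that $Vx=\tfrac1{\sqrt 2}(e_j+e_k)+\sum_{i\in F\setminus\{j\}}e_i$ has support exactly equal to $F\cup\{k\}$, with every coefficient on that support nonzero.

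To close the argument I would compare the two natural norms of $Vx$. On the one hand, $V^{\mathbf E}$ being an isometry of $\mathbf E$ gives
\[
\|(Vx)^{\mathbf E}\|_{\mathbf E}=\|V^{\mathbf E}\mathbf x\|_{\mathbf E}=\|\mathbf x\|_{\mathbf E}=\|x\|_{\F,\mathbf E}.
\]
On the other hand, by heredity of $\F$ and the finite support of $Vx$,
\[
\|Vx\|_{\F,\mathbf E}=\max\bigl\{\|P_G(Vx)^{\mathbf E}\|_{\mathbf E};\; G\in\F,\; G\subseteq F\cup\{k\}\bigr\}.
\]
Because $F\cup\{k\}\notin\F$, every admissible $G$ is a \emph{proper} subset of $F\cup\{k\}$; and since every coefficient of $(Vx)^{\mathbf E}$ on $F\cup\{k\}$ is nonzero, strict $1$-unconditionality of $(\mathbf e_i)$ forces $\|P_G(Vx)^{\mathbf E}\|_{\mathbf E}<\|(Vx)^{\mathbf E}\|_{\mathbf E}$ for each such $G$. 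The max is over a finite family, so it is attained and strictly less than $\|(Vx)^{\mathbf E}\|_{\mathbf E}$; hence $\|Vx\|_{\F,\mathbf E}<\|x\|_{\F,\mathbf E}$, contradicting the fact that $V$ is an isometry. The only real obstacle is bookkeeping, namely keeping the two parallel applications of Lemma \ref{hermi} straight; once the formula $(Vx)^{\mathbf E}=V^{\mathbf E}\mathbf x$ is recorded, the norm comparison is immediate.
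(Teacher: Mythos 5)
Your proposal is correct and follows essentially the same route as the paper: both invoke Lemma \ref{hermi} to produce the putative isometry $V=UP_{j,k}+(I-P_{j,k})$, use Lemma \ref{superbasic} to identify $\Vert x\Vert$ with $\Vert\mathbf x\Vert_{\mathbf E}$ for a vector supported on a set of $\F$, and then use strict $1$-unconditionality together with $\{j,k\}\cup F\notin\F$ to force $\Vert Vx\Vert<\Vert V^{\mathbf E}\mathbf x\Vert_{\mathbf E}=\Vert x\Vert$. The only differences are cosmetic (the paper tests against $e_j+x$ with $x$ supported on $F\setminus\{j,k\}$ and allows any unitary with non-zero entries, whereas you fix $x=\mathds{1}_F$ and a Hadamard-type matrix).
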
 
\begin{proof} Assume that $\F$ satisfies Condition $(*)_{j,k}$. By Lemma \ref{hermi}, we have to find a $2\times 2$ unitary matrix $U$ such that  the operator  $V:=UP_{j,k} +(I-P_{j,k})$ is not an isometry of $X_{\mathcal F,\mathbf E}$. We show that in fact, any unitary matrix $U$ with non-zero entries has this property.

\smallskip Write $U=\left(\begin{matrix} a&c\\
b& d\end{matrix}\right)$ with $a,b,c,d\neq 0$. Choose $F\in\F$ such that $F\cap\{ j,k\}\neq\emptyset$ and $\{ j,k\}\cup F\notin\F$. Assume for example that $j\in F$, and set $G:=F\setminus\{ j, k\}$. Then $\{j\}\cup G\in \F$ and $G\cup \{ j,k\}\notin\F$ (so $G\neq\emptyset$ since $\{ j,k\}\in\F$).

Let $x\in X_{\F,\mathbf E}$ have support equal to $G$. Then, since $\{ j\}\cup G\in\F$, 
\[ \Vert e_j+x\Vert =\Vert \mathbf e_j+\mathbf x\Vert_{\mathbf E}.\]
On the other hand,
\[ V(e_j+x)= ae_j+be_k+ x.\]
So, by strict $1$-unconditionality of $(\mathbf e_i)$ and since $\{ j,k\}\cup G\notin\F$ and $ab\neq 0$, we have 
\[ \Vert V(e_j+x)\Vert<\Vert a\mathbf e_j+b\mathbf e_k+\mathbf x\Vert_{\bf E}.\]
However, by Lemma \ref{hermi} we also have $\Vert a\mathbf e_j+b\mathbf e_k+\mathbf x\Vert_{\bf E}=\Vert \mathbf e_j+\mathbf x\Vert_{\mathbf E}$ because $\mathbf E_{j,k}$ is a hermitian subspace of $\mathbf E$ and  (with the obvious notation) 
$a\mathbf e_j+b\mathbf e_k+\mathbf x=\mathbf V(\mathbf e_j+\mathbf x)$. So $\Vert V(e_j+x)\Vert < \Vert e_j+x\Vert$, and hence $V$ is not an isometry. 
\end{proof}

\begin{lemma}\label{isom} Let $j\neq k$ in $\N$ 
be such that $\mathbf E_{j,k}$ is a hermitian subspace of $\mathbf E$. If $\F$ does not satisfy Condition $(*)_{j,k}$, 
then $E_{j,k}$ is a hermitian subspace of $X_{\F,\mathbf E}$.
\end{lemma}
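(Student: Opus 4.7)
My plan is to use Lemma \ref{hermi} as the reduction step: to show that $E_{j,k}$ is a hermitian subspace of $X_{\F,\mathbf E}$, it suffices to check that for every $2\times 2$ unitary matrix $U$, the operator $V:=UP_{j,k}+(I-P_{j,k})$ is an isometry of $X_{\F,\mathbf E}$. Since $V^{-1}$ has the same form (with $U^{-1}$), it will be enough to prove $\Vert Vx\Vert \leqslant \Vert x\Vert$ for every $x\in X_{\F,\mathbf E}$. By the definition of the norm, this amounts to showing $\Vert P_F\mathbf{Vx}\Vert_{\mathbf E}\leqslant \Vert x\Vert$ for every $F\in\F$, where I write $Vx=\alpha' e_j+\beta' e_k+\sum_{i\neq j,k} x_i e_i$ with $(\alpha',\beta')=U(x_j,x_k)^{\mathrm T}$.

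I would split the estimate into cases according to $F\cap\{j,k\}$. If $F\cap\{j,k\}=\emptyset$, then $P_F\mathbf{Vx}=P_F\mathbf x$ and there is nothing to do. If $\{j,k\}\subseteq F$, one directly computes $P_F\mathbf{Vx}=\mathbf V(P_F\mathbf x)$, where $\mathbf V:=UP_{j,k}+(I-P_{j,k})$ is the analogous operator on $\mathbf E$. Since $\mathbf E_{j,k}$ is hermitian, Lemma \ref{hermi} applied inside $\mathbf E$ tells us that $\mathbf V$ is an isometry of $\mathbf E$, so $\Vert P_F\mathbf{Vx}\Vert_{\mathbf E}=\Vert P_F\mathbf x\Vert_{\mathbf E}\leqslant \Vert x\Vert$.

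The remaining case is when exactly one of $j,k$ belongs to $F$; by symmetry assume $j\in F$, $k\notin F$. Here is where I use that $\F$ fails Condition $(*)_{j,k}$: since $F\cap\{j,k\}\neq\emptyset$, the set $F':=F\cup\{j,k\}=F\cup\{k\}$ must lie in $\F$. Then $P_{F'}\mathbf{Vx}=P_F\mathbf{Vx}+\beta'\mathbf e_k$, so by $1$-unconditionality of $(\mathbf e_i)$ in $\mathbf E$ we have $\Vert P_F\mathbf{Vx}\Vert_{\mathbf E}\leqslant \Vert P_{F'}\mathbf{Vx}\Vert_{\mathbf E}$. Since $F'$ contains both $j$ and $k$, the previous case applies and gives $\Vert P_{F'}\mathbf{Vx}\Vert_{\mathbf E}=\Vert P_{F'}\mathbf x\Vert_{\mathbf E}\leqslant \Vert x\Vert$, as desired.

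The only real point of substance is the ``enlargement trick'' $F\mapsto F\cup\{j,k\}$, which is precisely where the hypothesis that $\F$ fails $(*)_{j,k}$ enters; the rest is an unpacking of the definitions combined with the hermitian isometry property inherited from $\mathbf E$. I do not expect any obstacle beyond bookkeeping: the operator $V$ is plainly well defined and bounded on $X_{\F,\mathbf E}$ (as $P_{j,k}$ is), and the argument works without compactness or strict $1$-unconditionality, matching the hypotheses of the statement. Together with Lemma \ref{notisom}, this will finish the proof of Proposition \ref{carac}.
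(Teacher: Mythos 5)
Your proposal is correct and follows essentially the same route as the paper: reduce via Lemma \ref{hermi} to showing $\Vert Vx\Vert\leqslant\Vert x\Vert$, handle $F\cap\{j,k\}=\emptyset$ trivially, and otherwise enlarge $F$ to $G=\{j,k\}\cup F\in\F$ (using the failure of $(*)_{j,k}$), where $P_G\mathbf V\mathbf x=\mathbf VP_G\mathbf x$ and the hermitian property of $\mathbf E_{j,k}$ gives norm preservation. The only cosmetic difference is that you bound $\Vert P_F\mathbf V\mathbf x\Vert_{\mathbf E}$ for every $F\in\F$ and split the nonempty-intersection case into two subcases, whereas the paper picks a norm-attaining $F$ and treats $G=\{j,k\}\cup F$ uniformly.
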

\begin{proof} 
Assume that $\F$ does not satisfy Condition $(*)_{j,k}$. By Lemma \ref{hermi}, we have to show that for any unitary $2\times 2$ matrix $U$, the operator $V:=UP_{j,k} +(I-P_{j,k})$ is an isometry of $X_{\mathcal F,\mathbf E}$. It is in fact enough to show that $\Vert Vx\Vert\leqslant  \Vert x\Vert$ every $x\in X_{\F,\mathbf E}$, since we can then apply that to $V^{-1}$ (which has the same form as $V$); and we may assume that $x$ is finitely supported. 

\smallskip Choose $F\in\F$ such that $\Vert Vx\Vert=\Vert P_F\mathbf V\mathbf x\Vert_{\mathbf E}=\Vert P_FVx\Vert$. By assumption, we know that either $F\cap\{ j,k\}=\emptyset$ or $\{ j,k\}\cup F\in \F$. 
 
If  $F\cap\{ j,k\}=\emptyset$, then $P_F\mathbf V\mathbf x=P_F \mathbf x$, so $\Vert Vx\Vert=\Vert P_F\mathbf V\mathbf x\Vert_{\bf E} =  \Vert P_F\mathbf x\Vert_{\bf E} \leqslant  \Vert x\Vert$. 

Assume that $G:=\{ j,k\}\cup F\in\F$, so that $\Vert P_G x\Vert=\Vert P_G\mathbf x\Vert_{\mathbf E}$. Note that 
\[ P_G\mathbf V\mathbf x=\mathbf U P_{j,k} \mathbf x+P_G(I-P_{j,k})\mathbf x=\mathbf UP_{j,k} P_G\mathbf x+ (I-P_{j,k})P_G\mathbf x =\mathbf V P_G\mathbf x.\]
By Lemma \ref{hermi} and since $\mathbf E_{j,k}$ is a hermitian subspace of $\mathbf E$, it follows that $\Vert P_G\mathbf V\mathbf x\Vert_{\bf E}=\Vert P_G\mathbf x\Vert_{\mathbf E}$. So $\Vert Vx\Vert=\Vert P_F\mathbf V\mathbf x\Vert_{\bf E} \leqslant  \Vert P_G\mathbf V\mathbf x\Vert_{\bf E}=
\Vert P_G\mathbf x\Vert_{\mathbf E}\leqslant  \Vert x\Vert$.
\end{proof}

\smallskip
\begin{proof}[\it Proof of Proposition \ref{carac}] Assume (1), and let $j\neq k$ in $\N$. By (1) and Lemma \ref{hermi}, 
$E_{j,k}$ is not a hermitian subspace of $X_{\F,2}$; so $\F$ satisfies Condition $(*)_{j,k}$ by Lemma \ref{isom}. This shows that (1) implies (2). 
Conversely, assume (2). By \cite{KW}, to prove (1) it is enough to show that the Hilbert components of $X_{\F,2}$ are $1$-dimensional, \textit{i.e.} that $E_{j,k}$ is not a hermitian subspace of $X_{\F,2}$ for any $j\neq k$. If $\{ j,k\}\notin \F$ this is clear, since in that case $E_{j,k}$ is isometrically equal to $\ell_\infty(2)$ and so it is not hilbertian; and if $\{ j,k\}\in\F$, this follows from Lemma \ref{notisom}.
\end{proof}

\begin{corollary}\label{blablabla} Assume that $\K=\mathbb C$, and let $\F$ be a combinatorial family. Assume that for every $k\in\N$, one can find $F\in\F^{MAX}$ such that $\min F=k$. Then every isometry of $X_{\F,2}$ is an $\F$-$\,$compatible signed permutation of $(e_i)$.
\end{corollary}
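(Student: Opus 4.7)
The plan is to reduce to Proposition \ref{carac}, supplemented by Proposition \ref{trucmachin}(1) to recover $\F$-compatibility. By Proposition \ref{carac}, in order to conclude that every isometry of $X_{\F,2}$ is a signed permutation of $(e_i)$, it suffices to check that $\F$ satisfies Condition $(*)_{j,k}$ for every pair of distinct indices $j,k\in\N$, that is, the existence of a set $F\in\F$ with $F\cap\{j,k\}\neq\emptyset$ and $\{j,k\}\cup F\notin\F$.

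I would fix such $j\neq k$ and assume, without loss of generality, that $j<k$. By the hypothesis of the corollary, one can choose $F\in\F^{MAX}$ with $\min F=k$. Then $k\in F$, so $F\cap\{j,k\}\supseteq\{k\}$ is nonempty; and since $j<k=\min F$, we have $j\notin F$, so $\{j,k\}\cup F=\{j\}\cup F$ is a proper superset of $F$. Since $F$ is maximal in $\F$, this proper superset cannot lie in $\F$. This establishes Condition $(*)_{j,k}$.

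Proposition \ref{carac} then yields that every isometry $J$ of $X_{\F,2}$ has the form $Je_i=\omega_i e_{\pi(i)}$ for some permutation $\pi$ of $\N$ and some sequence of signs $(\omega_i)$. To upgrade this to the stronger statement that $\pi$ is $\F$-compatible, I would invoke Proposition \ref{trucmachin}(1) applied with $\mathbf E:=\ell_2$ (whose canonical basis is normalized and strictly $1$-unconditional) and $\F_1=\F_2=\F$, which forces $\pi(\F)=\F$. There is no real obstacle here: the entire content of the proof is the elementary observation that the existence of a maximal element of $\F$ with arbitrary prescribed minimum is precisely the witness required for Condition $(*)_{j,k}$ for every pair $j<k$.
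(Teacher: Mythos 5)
Your proof is correct and follows essentially the same route as the paper: the hypothesis produces, for each $j<k$, a maximal set containing $k$ but not $j$, which witnesses Condition $(*)_{j,k}$ by maximality, and then Proposition \ref{carac} together with Proposition \ref{trucmachin}(1) (applied with $\mathbf E=\ell_2$) gives the conclusion. The paper's proof is just a compressed version of exactly this argument.
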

\begin{proof} The assumption on $\F$ implies that for any $j<k$ in $\N$, one can find $F\in\F^{MAX}$ such that $k\in F$ but $j\notin F$; so the resut is clear by Proposition \ref{carac} and Proposition~\ref{trucmachin}~(1).
\end{proof}

\begin{remark} The assumption of the previous corollary is satisfied for example by any compact and spreading combinatorial family. It is also satisfied by some non-compact families, for example $\F:=\{ F\in{\rm FIN};\; F\subseteq 2\N\;{\rm or}\; F\subseteq \{ k,k+1\}\;\hbox{for some $k\in\N$}\}$.
\end{remark}

\smallskip We conclude this section with another result of the same kind. As always, $\mathbf E$ is a Banach space with a normalized $1$-unconditional basis $(\mathbf e_i)$.

\begin{proposition}\label{encore} Assume that $\K=\mathbb C$. Let $\F$ be a compact combinatorial family, and assume that {$\F$ satisfies Condition} $(*)_{j,k}$ for any $j\neq k$ in $\N$.  
 In each of the following two cases, every isometry of 
$X_{\F,\mathbf E}^*$ is an $\F$-$\,$compatible signed permutation of $(e_i^*)$.
\begin{enumerate}
\item[\rm (a)] $\Vert\,\cdot\,\Vert_\mathbf E$ is smooth at every finitely supported vector $\mathbf x\in S_{\mathbf E}$.
\item[\rm (b)] The basis $(\mathbf e_i)$ is strictly $1$-unconditional and, for every finite set $G\subset \N$, one can find an extreme point $\mathbf z^*$ of the unit ball of ${\rm span}(\mathbf e_i^*, i\in G)$ with support exactly equal to $G$.
\end{enumerate}
\end{proposition}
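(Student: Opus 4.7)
The plan is to adapt the Kalton--Wood approach used in Theorem \ref{nonsense} (Case 2) and Proposition \ref{carac}. Since $\F$ is compact, the basis $(e_i)$ of $X_{\F,\mathbf E}$ is shrinking by Fact \ref{domi}, so $(e_i^*)$ is a normalized $1$-unconditional Schauder basis of $X_{\F,\mathbf E}^*=Z_{\F,\mathbf E}$; by \cite{KW}, it therefore suffices to show that every Hilbert component of $X_{\F,\mathbf E}^*$ is one-dimensional. Once this is done, every isometry $J$ of $X_{\F,\mathbf E}^*$ permutes the one-dimensional components, so $Je_i^*=\omega_i e_{\pi(i)}^*$ for some permutation $\pi:\N\to\N$ and signs $\omega_i$; the $\F$-compatibility of $\pi$ follows separately.

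A hermitian subspace is Hilbertian, so to prove that $E_{j,k}^*:={\rm span}(e_j^*,e_k^*)$ is not hermitian in $X_{\F,\mathbf E}^*$ it is enough to treat the case where $E_{j,k}^*$ is isometric to $\ell_2(2)$. This forces $\{j,k\}\in\F$ (else $E_{j,k}^*=\ell_1(2)$). By Lemma \ref{hermi}, I need only exhibit one $2\times 2$ unitary $U$ for which $V:=UP_{j,k}+(I-P_{j,k})$ fails to be an isometry of $X_{\F,\mathbf E}^*$. I take $U$ with $Ue_j^*=ae_j^*+be_k^*$ and $ab\neq 0$, and I exploit Condition $(*)_{j,k}$: pick $F\in\F$ with $F\cap\{j,k\}\neq\emptyset$ and $\{j,k\}\cup F\notin\F$; say $j\in F$ and set $G:=F\setminus\{j,k\}$. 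Then $\{j\}\cup G\in\F$ while $\{j,k\}\cup G\notin\F$ (in particular $k\notin F$).

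The contradiction comes from extreme points, and the scheme is uniform in cases (a) and (b): I produce an extreme point $x^*$ of $B_{X_{\F,\mathbf E}^*}$ with $j\in{\rm supp}(x^*)$, $k\notin{\rm supp}(x^*)$ and $\{j\}\cup G\subseteq{\rm supp}(x^*)\in\F$. Then $P_{j,k}x^*=\alpha e_j^*$ with $\alpha\neq 0$, so $Vx^*=\alpha a\,e_j^*+\alpha b\,e_k^*+(I-P_{j,k})x^*$ has support $\{k\}\cup{\rm supp}(x^*)\supseteq\{j,k\}\cup G\notin\F$; by heredity ${\rm supp}(Vx^*)\notin\F$, so $Vx^*\notin{\rm Ext}(B_{X_{\F,\mathbf E}^*})$ by the third remark following Lemma \ref{ext}, and $V$ is not an isometry. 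In case (a), the smoothness hypothesis together with the second remark following Lemma \ref{ext} delivers such an $x^*$ at once: any $x^*$ with ${\rm supp}(x^*)=\{j\}\cup G$ and $\|\mathbf x^*\|_{\mathbf E^*}=1$ lies in $M_1\subseteq{\rm Ext}(B_{X_{\F,\mathbf E}^*})$. In case (b), I extend $\{j\}\cup G$ to some $F'\in\F^{MAX}$ (possible by compactness of $\F$, with $k\notin F'$ since otherwise $\{j,k\}\cup G\in\F$), apply the richness hypothesis to $F'$ to obtain an extreme point $\mathbf z^*$ of the unit ball of ${\rm span}(\mathbf e_i^*,i\in F')$ of support exactly $F'$, and invoke Lemma \ref{extbis}: the maximality of $F'$ in $\F$ collapses the condition there to this single set, so the corresponding $z^*\in X_{\F,\mathbf E}^*$ is extreme.

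The $\F$-compatibility of $\pi$ is obtained by the same construction: in case (a), for each $F\in\F$ pick $\mathbf x^*\in S_{\mathbf E^*}$ with support exactly $F$; its copy $x^*\in X_{\F,\mathbf E}^*$ lies in $M_1$ and so is extreme, whence $\pi(F)={\rm supp}(Jx^*)\in\F$. In case (b), do the same for $F\in\F^{MAX}$ via the richness hypothesis, and handle general $F\in\F$ by heredity applied to a maximal extension. The main obstacle, I expect, is precisely the application of Lemma \ref{extbis} in case (b): the richness hypothesis only guarantees extremeness of $\mathbf z^*$ inside ${\rm span}(\mathbf e_i^*,i\in F')$, not in larger $\F$-subspaces; choosing $F'$ in $\F^{MAX}$ is the trick that bypasses this.
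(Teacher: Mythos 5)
Your proposal is correct and follows essentially the same route as the paper: reduce via Kalton--Wood and Lemma \ref{hermi} to showing that no ${\rm span}(e_j^*,e_k^*)$ is a hermitian subspace of $X_{\F,\mathbf E}^*$, then use Condition $(*)_{j,k}$ to produce an extreme point (via the second remark after Lemma \ref{ext} in case (a), via the richness hypothesis plus Lemma \ref{extbis} applied to a maximal extension in case (b) --- this is exactly the paper's Fact \ref{tiny}) whose image under $V=UP_{j,k}+(I-P_{j,k})$ has support outside $\F$ and hence cannot be extreme. The only cosmetic differences are that the paper passes to a set of $\F^{MAX}$ in both cases and phrases the argument contrapositively, and that for $\F$-compatibility one should also note that the same argument applied to $J^{-1}$ yields $\pi^{-1}(\F)\subseteq\F$.
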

\begin{proof} We first note that since $\F$ is compact, $(e_i)$ is a shrinking basis of $X_{\F,\mathbf E}$ by Fact~\ref{domi}.  

\smallskip We also note that if an isometry of $X_{\F,\mathbf E}^*$ sign-permutes $(e_i^*)$, then the associated permutation of $\N$ must preserve $\F$: this follows from Remark 2 after Lemma \ref{ext} in Case~(a), and from Lemma \ref{extbis} in Case (b). So we just have to show that 
any isometry of $X_{\F,\mathbf E}^*$ sign-permutes $(e_i^*)$. Assume otherwise. Then, by \cite{KW} and since $(e_i^*)$ is a basis of $X_{\F,\mathbf E}^*$, 
there are $j\neq k$ in $\N$ such that ${\rm span}(e_j^*,e_k^*)$ is a hermitian subspace of $X_{\F,\mathbf E}^*$; 
and we must have $\{ j,k\}\in\F$ since otherwise ${\rm span}(e_j^*,e_k^*)$ is not hilbertian. So, for any unitary $2\times 2$ matrix $U$, the operator $V:= UP_{j,k}+(I-P_{j,k})$ is a well-defined isometry of $X_{\F,\mathbf E}^*$.  
A contradiction will arise from the following fact.

\begin{fact}\label{tiny} The support of any extreme point of $B_{X_{\F,\mathbf E}^*}$ belongs to $\F$; and conversely, for any set $G\in \F^{MAX}$, one can find $z^*\in {\rm Ext}\bigl( B_{X_{\F,\mathbf E}^*}\bigr)$ with support exactly equal to $G$.
\end{fact}
\begin{proof}[\it Proof of Fact \ref{tiny}] In Case (a), this is clear by Remark 2 after Lemma \ref{ext}. Assume we are in Case (b). By Lemma \ref{extbis},  any extreme point of $B_{X_{\F,\mathbf E}^*}$ has support in $\F$. Conversely, if $G\in\F^{MAX}$, one can find an extreme point $\mathbf z^*$ of the unit ball of 
${\rm span}(\mathbf e_i^*, i\in G)$ with support equal to $G$, and the associated vector $z^*\in {\rm span}(e_i^*, \, i\in G)$ is an extreme point of $B_{X_{\F,\mathbf E}^*}$ by Lemma \ref{extbis}.
\end{proof}

\smallskip Now, take a set $G\in\F^{MAX}$ such that (for example) $j\in G$ but $k\notin G$; so $G=\{ j\}\cup F$ where $F\cap\{ j,k\}=\emptyset$. By Fact \ref{tiny}, we may choose $z^*\in {\rm Ext}\bigl( B_{X_{\F,\mathbf E}^*}\bigr)$ with support exactly equal to $G=\{ j\}\cup F$. Then, for any $2\times 2$ unitary matrix $U$, the vector $Vz^* =UP_{j,k}z^*+(I-P_{j,k})z^*$ has support in $\F$, by Fact \ref{tiny} again. However, if $U$ has non-zero entries, then ${\rm supp}(Vz^*)=\{j,k\}\cup F=\{ k\}\cup G\notin\F$.
\end{proof}

\begin{remark} The proof has shown that in Case (b), it is enough to have the required property for every $G\in\F^{MAX}$. However, we wanted an assumption on $\mathbf E$ that does not depend on the family $\F$.
\end{remark}

\begin{corollary}\label{Scfin} Assume that $\K=\mathbb C$ and that either $\Vert \,\cdot\,\Vert_{\mathbf E}$ is smooth at every finitely supported $\mathbf x\in S_{\bf E}$, or $(\mathbf e_i)$ is strictly $1$-unconditional and, for every finite set $G\subset \N$, one can find an extreme point $\mathbf z^*$ of the unit ball of ${\rm span}(\mathbf e_i^*, i\in G)$ with support  equal to $G$. If  $(\Sc_\alpha)_{\alpha<\omega_1}$ is a transfinite Schreier sequence then, for any $1\leqslant\alpha <\omega_1$, every isometry of $X_{\Sc_\alpha, \mathbf E}^*$ is diagonal.
\end{corollary}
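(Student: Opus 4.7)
The plan is to derive Corollary \ref{Scfin} as a direct application of Proposition \ref{encore} combined with the rigidity statement for Schreier families established in Lemma \ref{rk}. The conditions on $\mathbf E$ are precisely cases (a) and (b) of Proposition \ref{encore}, so the only substantive thing to verify is that $\F := \Sc_\alpha$ satisfies the combinatorial hypotheses of that proposition: compactness, and Condition $(*)_{j,k}$ for every $j \neq k$ in $\N$.

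Compactness of $\Sc_\alpha$ is stated in the introduction of Section \ref{Schreier} (and is equivalent to $\Sc_\alpha^{(\omega^\alpha)} = \{\emptyset\}$, which was recalled there). For $(*)_{j,k}$, fix $j \neq k$ and assume WLOG $j < k$. By the remark following Corollary \ref{blablabla}, any compact spreading combinatorial family, and in particular $\Sc_\alpha$, has the property that for every $m \in \N$ there exists $F \in \Sc_\alpha^{MAX}$ with $\min F = m$. Apply this with $m = k$ to choose $F \in \Sc_\alpha^{MAX}$ such that $\min F = k$. Then $k \in F$ and $j \notin F$ (since $j < k = \min F$), so $F \cap \{j,k\} = \{k\} \neq \emptyset$ and $\{j,k\} \cup F = F \cup \{j\}$ properly contains the maximal element $F$ of $\Sc_\alpha$; hence $\{j,k\} \cup F \notin \Sc_\alpha$. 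This establishes $(*)_{j,k}$.

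Proposition \ref{encore} therefore applies to $X_{\Sc_\alpha, \mathbf E}^*$ and guarantees that every isometry of this space is an $\Sc_\alpha\,$-$\,$compatible signed permutation of $(e_i^*)$; that is, there exist signs $(\omega_i)$ and a permutation $\pi : \N \to \N$ with $\pi(\Sc_\alpha) = \Sc_\alpha$ such that $Je_i^* = \omega_i e_{\pi(i)}^*$ for all $i$. To conclude that $J$ is diagonal, I need to argue that $\pi$ must be the identity. This is exactly where Lemma \ref{rk} is used: since the map $n \mapsto \mathrm{rk}_{\Sc_\alpha}(\{n\})$ is strictly increasing (and in particular injective), and since any permutation preserving a compact combinatorial family containing all singletons must preserve the Cantor–Bendixson rank of each singleton (as noted just before the statement of Lemma \ref{rk}), the identity is the only permutation preserving $\Sc_\alpha$.

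There is no real obstacle here — every ingredient has been prepared earlier in the paper. The only mild subtlety is the choice of $F \in \Sc_\alpha^{MAX}$ with prescribed minimum $k$; but this is folklore for compact spreading families, is asserted in the remark after Corollary \ref{blablabla}, and in any case is easy to verify by transfinite induction on $\alpha$ (for $\Sc_1$ take $F = \{k, k+1, \dots, 2k-1\}$; at successor stages concatenate $k$ maximal blocks of $\Sc_\beta$ each with prescribed minimum via Lemma \ref{maxsucc}; at limit stages use the clause $F \in \Sc_{\alpha_n}$ with $n = k \leqslant \min F$, where the good-sequence hypothesis is not even needed because we only care about a single value of $n$).
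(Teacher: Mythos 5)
Your proof is correct and follows the same route as the paper: the paper's own (one-line) proof likewise invokes Proposition \ref{encore} for the compact, spreading family $\Sc_\alpha$ (with Condition $(*)_{j,k}$ supplied by a maximal set of prescribed minimum, as in the remark after Corollary \ref{blablabla}) and then concludes via the fact, established through Lemma \ref{rk}, that the identity is the only permutation preserving $\Sc_\alpha$. Your write-up merely makes these verifications explicit.
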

\begin{proof} Since $\Sc_\alpha$ is compact and spreading, the result follows from Proposition \ref{encore} together with the fact that the only permutation of $\N$ preserving $\Sc_\alpha$ is the identity.
\end{proof}


\begin{thebibliography}
\rm\bibitem{AK} F. Albiac and N. J. Kalton, \textit{Topics in Banach space theory.} Graduate Texts in Mathematics {\bf 233}, Springer (2006).
\rm\bibitem{AA} D. E. Alspach and S. A. Argyros, \emph{Complexity of weakly null sequences.}  Dissertationes Math. {\bf 321} (1992).
\rm\bibitem{AB} L. Antunes and K. Beanland, \emph{Surjective isometries on Banach sequence spaces: a survey.}  Concr. Oper. {\bf 9} (2022),  19--40.
\rm\bibitem{ABC} L. Antunes, K. Beanland and V. H. Chu, \emph{On the geometry of higher order Schreier spaces.} Illinois J. Math. {\bf 65} (2021), 47--69.
\rm\bibitem{BessPel} C. Bessaga and A. Pe\l czy\'nski, \emph{Spaces of continuous functions IV.} Studia Math. {\bf 19} (1960), 5--62.
\rm\bibitem{BiLau} A. Bird and N. Laustsen, \emph{An amalgamation of the Banach spaces associated with James and Schreier, Part I: Banach-space structure.} Banach algebras 2009, 45--76. 
Banach Center Publ., {\bf 91} (2010). 
\rm\bibitem{russian} M. S. Braverman and E. M. Semenov, \emph{Isometries of symmetric spaces}. Soviet Math. Doklady {\bf 15} (1974), 1027--1030.
\rm\bibitem{BFT} C. Brech, V. Ferenczi and A. Tcaciuc, \emph{Isometries of combinatorial Banach spaces.}  Proc. Amer. Math. Soc. {\bf 148} (2020), no. 11, 4845--4854. 
\rm\bibitem{BP} C. Brech and C. Pi$\tilde{\rm n}$a, \emph{Banach-Stone-like results for combinatorial Banach spaces.} Ann. Pure Appl. Logic {\bf 172} (2021), no. 8, Paper No. 102989.
\rm\bibitem{AC} C. Brech and A. Tcaciuc, \emph{Notes on isometries of $X_{\F}^p$.} Work in progress.
\rm\bibitem{Car} N. L. Carothers, \textit{A short course on Banach space theory.} London Mathematical Society Student Texts {\bf 64}. Cambridge University Press (2005).
\rm\bibitem{Cau} R. Causey, \emph{Concerning the Szlenk index.} Studia Math. {\bf 236} (2017), 201--244.
\rm\bibitem{FJ}  R. J. Fleming and J. E. Jamison, \textit{Isometries on Banach spaces: function spaces.} Chapman \& Hall/CRC Monographs and Surveys in Pure and Applied Mathematics {\bf 129}. 
Chapman \& Hall/CRC  (2003). 
\rm\bibitem{G} I. Gasparis, \emph{A dichotomy theorem for subsets of the power set of natural numbers}. Proc. Amer. Math. Soc. {\bf 129}, no. 3, 759--764.
\rm\bibitem{GL} I. Gasparis and D. H. Leung, \emph{On the complemented subspaces of the Schreier spaces.} Studia Math. {\bf 141} (2000), 273--300.
\rm\bibitem{Gr} R. Gra\'slewicz, \emph{Finite dimensional Orlicz spaces}.  Bull. Polish Acad. Sci. Math. {\bf 33} (1985),  277--283.
\rm\bibitem{J} R. C. James, \emph{Bases and reflexivity of Banach spaces}. Ann. Math {\bf 52} (1950), 518--527.
\rm\bibitem{KW} N. J. Kalton and G. V. Wood, \emph{Orthonormal systems in Banach spaces and their applications.} Math. Proc. Camb. Phil. Soc. {\bf 79} (1976), 493--510.
\rm\bibitem{LT} J. Lindenstrauss and L. Tzafriri, \textit{Classical Banach spaces I}. Springer (1977).
\rm\bibitem{Ph} R. R. Phelps,  \textit{Lectures on Choquet's theorem} (2nd edition). Lecture Notes in Mathematics {\bf 1757}. Springer (2001).
\rm\bibitem{Rain} J. Rainwater, \emph{Weak convergence of bounded sequences.} Proc. Amer. Math. Soc. {\bf 14} (1963), 999.
\rm\bibitem{Beata} B. Randrianantoanina, \emph{Injective isometries in Orlicz spaces}. Contemporary Math. {\bf 232} (1999), 269--287.
\rm\bibitem{RR} M. M. Rao and Z. D. Ren, \textit{Theory of Orlicz spaces}. Monographs and Textbooks in Pure and Applied Mathematics {\bf 146}, Marcel Dekker (1991).
\rm\bibitem{Ro} H. P. Rosenthal, \emph{The Banach space $\mathcal C(K)$}. Handbook of the Geometry of Banach spaces, vol. 2. North-Holland (2003).
\rm\bibitem{T} K. W. Tam, \emph{Isometries of certain function spaces.} Pacific J. Math {\bf 31} (1969), 233--246.
\end{thebibliography}
\end{document}